\documentclass[a4paper, 11 pt]{amsart}

\usepackage[english]{babel}
\usepackage[utf8]{inputenc}
\usepackage[T1]{fontenc}
\usepackage{amsmath}
\usepackage{amssymb}
\usepackage{amsthm}
\usepackage{mathtools}
\usepackage{tikz}
\usepackage{tikz-cd}
\usepackage{extpfeil}
\usepackage{amsbsy}
\usepackage[left=3.0cm, right=3.0cm, top=3.50cm, bottom=3.0cm, headheight=14pt]{geometry}
\usepackage[all,cmtip]{xy}
\usepackage{fancyhdr}
\usepackage{verbatim}
\usepackage{graphicx, caption, float, amsmath}
\usepackage[hidelinks]{hyperref}
\usepackage{color}
\usepackage{mleftright}
\usepackage{mathrsfs}
\usepackage{stmaryrd} %Per la figura boxempty
\usepackage{csquotes}

\newtheorem{teor}{Theorem}[section]
\newtheorem{prop}[teor]{Proposition}
\newtheorem{lema}[teor]{Lemma}
\newtheorem{coro}[teor]{Corollary}

\newtheorem{defi}[teor]{Definition}
\newtheorem{rem}[teor]{Remark}
\newtheorem{exam}[teor]{Example}

\newtheorem{nota}[teor]{Notation}
\providecommand{\abs}[1]{\lvert#1\rvert}

%\addbibresource{Bibliografia.bib}

\newcommand\enllas{\raise.5pt\hbox{$\boxempty\kern-4.85pt{}^{\tiny\nearrow}$}\kern-2pt}
 %Números romanos en minúscula

\setlength{\parindent}{0em}
\lhead{Orbifolds}

\author{Ramon Gallardo Campos}
\title[Euler Characteristic of Odd-Dimensional Orbifolds and Their Boundaries]{On the Euler Characteristic of Odd-Dimensional Orbifolds and Their Boundaries}

\DeclareMathOperator{\Fix}{Fix}

\newcommand{\NN}{\ensuremath{\mathbb{N}}} % set of natural numbers
\newcommand{\ZZ}{\ensuremath{\mathbb{Z}}} % set of integers
 % set of rationals
\newcommand{\RR}{\ensuremath{\mathbb{R}}} % set of real numbers

 % set of real projective space
 % Torus
\newcommand{\OO}{\ensuremath{\mathcal{O}}}
\newcommand{\SSS}{\ensuremath{\mathcal{S}}}

\newcommand{\PP}{\ensuremath{\mathcal{P}}}
 % set of complex projective space
 % set of complex numbers
 % a general field
 % cup-length
\newcommand{\quot}[2]{
  \mathchoice
  {% \displaystyle 
    \text{\raise1ex\hbox{$#1$}\!\Big/\!\lower1ex\hbox{$#2$}}}
  {% \textstyle
    #1/#2}
  {% \scriptstyle
    #1/#2}
  {% \scriptscriptstyle  
    #1/#2}
}% quotient group. Usage A/B--->\quot{A}{B}.

% Custom headers
\pagestyle{myheadings}
\markboth{Ramon Gallardo Campos}{On the Euler Characteristic of Compact Odd-Dimensional Orbifolds and T}

\begin{document}
\thispagestyle{empty}

\begin{abstract}
   In this work we prove that for a compact odd-dimensional orbifold its Euler characteristic is half of the Euler characteristic of its boundary.
\end{abstract}
\maketitle

\textbf{Keywords} Orbifold, Euler characteristic, Strata

\textbf{MSC number:} 57R18, 57S17, 57S30, 54H15

\tableofcontents
\thispagestyle{empty}

\section{Introduction}

For compact manifolds of odd dimension, there is an intriguing formula: the Euler characteristic of the manifold is half the Euler characteristic of its boundary. Given that orbifolds generalize manifolds, a natural question arises: does this formula hold for orbifolds as well? The answer is yes.

In this paper, we provide two proofs of this result. The first proof is due to Ichiro Satake, who in 1957 stated and proved that the Euler characteristic of an odd-dimensional compact Riemannian orbifold without boundary is zero. From this result, the desired formula follows. However, Satake's proof relies on the Chern-Gauss-Bonnet theorem.

To complement this, we offer a purely topological proof of the formula. Our approach involves decomposing the orbifold into simpler pieces, making the analysis of this formula more tractable. Specifically, we define a stratification of the singular locus and extract neighborhoods of the strata to prove the result.

In the second section, we introduce the basic theory of orbifolds. In the third section, we define the stratification that will be used to prove the main theorem, and in the fourth section, we present the proof.

\section{Orbifold preliminaries}

An orbifold $\OO$ will be a space which can be locally understood as $\mathbb{R}^{n}$ modulo the action of some finite group for some $n \in \mathbb{N}$. This concept should intuitively generalise the concept of manifold, which in the end will be an orbifold with associated finite groups being the trivial group. The formal definition, as stated in \cite{thurston-2023}, is the following:

\begin{defi}
    An orbifold of dimension $n$ or $n$-orbifold $\OO$ consists of a topological Hausdorff space, $|\OO|$, which we call underlying space of the orbifold, equiped with the following structure, called \emph{orbifold atlas}:
    \begin{enumerate}
        \item There exists a countable open covering $\{U_{i}\}_{i}$ of $|\OO|$ closed under finite intersection.
        \item For each $U_{i}$, there exists a finite group $\Gamma_{i}$, an action of $\Gamma_{i}$ on an open subset $\widetilde{U}_{i}$ of $\RR^{n}$ and a homeomorphism $\varphi_{i} \colon U_{i} \longrightarrow \widetilde{U}_{i}/\Gamma_{i}$.
        \item Whenever $U_{i} \subset U_{j}$, there exists an injective homomorphism $f_{ij} \colon \Gamma_{i} \longrightarrow \Gamma_{j}$ and an embedding $\widetilde{\varphi}_{ij} \colon \widetilde{U}_{i} \longrightarrow \widetilde{U}_{j}$ equivariant with respect to $f_{ij}$, i.e. for $\gamma \in \Gamma_{i}$ $\widetilde{\varphi}_{ij}(\gamma x) = f_{ij}(\gamma)\widetilde{\varphi}_{ij}(x)$, satisfying that the following diagram commutes:

        \begin{center}
            % https://tikzcd.yichuanshen.de/#N4Igdg9gJgpgziAXAbVABwnAlgFyxMJZABgBpiBdUkANwEMAbAVxiRAB12B3LWPB2MACqAXwD6wLCJAjS6TLnyEUAZnJVajFm048+WATGHjgAK2mz52PASJkATBvrNWiDt14x+g0RKkB6TgBxOgBbULo-CzkQDGslIjVHamdtN11PbyNfMxFA9hDwyMloq0VbVVIAFictV3c9LwMfE3N8woiJcxkYuPLlZDUANlqXNiEu0tiFGwGyEZS68aiZDRgoAHN4IlAAMwAnCFCkIeocCCQAVjO6AzYACwgIAGsevcPjxGuQc6Qq6gYdAARjAGAAFGYJNz7LAbe44ECLMbpdj0fZoe5YSZvEAHI4nM4XRD2JFpdxojFYko4vGfNQ-In-TTI3F+boA4GgiHxCogBgwXYIyy4j5IEkMpD0wEg8GQ3n8wWI5lkzgUzFskQAAgAvJqMvpDMBVXR0ZiTFg2sEwp1qcLaUgyBLiXbRYgAIyEyUu-GIR2-d2k+r6pqG42mqQa1YiIA
\begin{tikzcd}
\widetilde{U}_{i} \arrow[dd] \arrow[rrr, "\widetilde{\varphi}_{ij}"]                           &  &  & \widetilde{U}_{j} \arrow[dd]                      \\
                                                                                               &  &  &                                                   \\
\widetilde{U}_{i}/\Gamma_{i} \arrow[rrr, "\varphi_{ij} = \widetilde{\varphi}_{ij}/\Gamma_{i}"] &  &  & \widetilde{U}_{j}/\Gamma_{i} \arrow[dd, "f_{ij}"] \\
                                                                                               &  &  &                                                   \\
                                                                                               &  &  & \widetilde{U}_{j}/\Gamma_{j}                      \\
                                                                                               &  &  &                                                   \\
U_{i} \arrow[rrr, hook] \arrow[uuuu, "\varphi_{i}"]                                            &  &  & U_{j} \arrow[uu, "\varphi_{j}"']                 
\end{tikzcd}
        \end{center}

    \end{enumerate}
\end{defi}

$\widetilde{\varphi}_{ij}$ and $f_{ij}$ are defined up to composition and conjugation, respectively, by elements of $\Gamma_{j}$. Also, although it does not generally hold that when $U_{i} \subset U_{j} \subset U_{k}$ then $\widetilde{\varphi}_{ik}=\widetilde{\varphi}_{jk} \circ \widetilde{\varphi}_{ij}$, there exists an element $\gamma \in \Gamma_{k}$ such that $\gamma \widetilde{\varphi}_{ik}=\widetilde{\varphi}_{jk} \circ \widetilde{\varphi}_{ij}$ and $\gamma f_{ik} \gamma^{-1} = f_{jk} \circ f_{ij}$.

\begin{rem}
    Two atlases give rise to the same orbifold structure if they can be combined to a larger atlas still satisfying the definitions. Therefore, the covering $\{U_{i}\}_{i}$ is not an intrinsic part of the orbifold structure. We say that $(U_{i}, \widetilde{U}_{i},\Gamma_{i},\varphi_{i})$ or just $(\widetilde{U}_{i},\Gamma_{i},\varphi_{i})$ is a chart of the orbifold.
\end{rem}

The information of the local behavior will be strongly related to the following remark. 

\begin{rem}
    Let $\OO$ be an orbifold and $x \in \OO$. Take a local chart $(\widetilde{U}, \Gamma, \varphi)$ such that $x \in \varphi(\widetilde{U})$ and choose $\widetilde{x} \in \widetilde{U}$ such that $\varphi(\widetilde{x}) = x$. It can be proven that the structure of the isotropy subgroup $G_{\widetilde{x}} \subseteq \Gamma$ at $\widetilde{x}$ does not depend on the choice of $\widetilde{U}$ and $\widetilde{p}$, and therefore also does not depend on the choice of the local chart, and is uniquely determined by $x$.
\end{rem}
Last remark motivates the following definition.

\begin{defi}[Local group]
    Let $\OO$ be an orbifold and $x \in \OO$. We define the local group of $\OO$ at $x$, denoted as $\Gamma_{x}$, as the isotropy group of $\widetilde{x}$ where $\widetilde{x} \in \widetilde{U}$ such that $\varphi(\widetilde{x}) = x$ for a local chart $(\widetilde{U}, \Gamma, \varphi)$ such that $x \in \varphi(\widetilde{U})$.
\end{defi} 
By the previous remark, the local group is well defined up to isomorphism.

\begin{rem}
    Note that a point $x \in \OO$ has a local chart $(\widetilde{U}, \Gamma_{x},\varphi)$. Indeed, the local group could be defined as the group associated to a local chart of $x$ where $\varphi(0)=x$.
\end{rem}

A local chart around $x$ with associated group isomorphic to $\Gamma_{x}$ is called a \emph{fundamental} chart.

\begin{rem}\label{point inside fundamental chart implies injection of local groups}
    Let $\OO$ be an orbifold and let $x,y \in \OO$. Assume that exists a fundamental chart of $x$, $(U_{x}, \widetilde{U_{x}}, \Gamma_{x}, \varphi_{x})$, such that $y \in U_{x}$. There exists $\varepsilon > 0$ such that $(U_{y},B(y,\varepsilon), \Gamma_{y}, \varphi_{x_{k}})$ is a fundamental chart of $y$, where $B(y,\varepsilon)$ is a ball centered at a lift of $y$, and $U_{y} \subseteq U_{x}$. This implies that there is a natural injection
        $$\Gamma_{y} \hookrightarrow \Gamma_{x}$$
\end{rem}

\begin{defi}[Regular and singular points]
    We say that $x$ is regular if $\Gamma_{x}$ is trivial, otherwise we say $x$ is singular.
\end{defi}

Thus, the regular points will be the points that have a locally euclidean behaviour (the set of all regular points is in fact an open manifold), whereas the singular ones will have a more complicated local behaviour and therefore will be of our interest.

\begin{defi}[Singular locus]
    The singular locus of an orbifold $\OO$, $\Sigma_{\OO}$, as the set of singular points of $\OO$.
\end{defi}

The notion of submanifold can also be generalised as the following definition shows.

\begin{defi}[Suborbifold]
    A $d$-suborbifold $\OO_{1}$ of an $n$-orbifold $\OO_{2}$ is a subspace $|\OO_{1}| \subset |\OO_{2}|$ locally modelled by $\RR^{d} \subset \RR^{n}$ modulo finite groups.
\end{defi}

Note that $|\OO|$ can be a manifold even though $\OO$ is not. We say $\OO$ is connected if $|\OO|$ is connected, and the same applies to compactness. However, orbifolds with boundary are not those with a manifold with boundary as underlying space; we have to define them with subsets $\widetilde{U}$ of $\RR_{+}^n := \RR^{n-1} \times [0, \infty)$, this is:

\begin{defi}[Orbifold with boundary]
    An orbifold with boundary is a space locally modelled on $\RR_{+}^{n}$ modulo finite groups.
\end{defi}

Several notions can be defined for orbifolds by extending the definition for manifolds. The boundary of $\OO$, denoted by $\partial \OO$, is the set of points $x \in \OO$ such that $x$ has a local prechart $(\phi,U,\widetilde{U},\Gamma,\varphi)$ with $\phi(x) \subset \RR^{n-1} \times \{0\}$. The orbifold $\OO \backslash \partial \OO$ is called the interior of $\OO$ and denoted by $Int (\OO)$. When $\partial \OO = \emptyset$ we say that $\OO$ is \emph{closed} if it is compact and \emph{open} otherwise. One has to be careful to avoid confusion because when $|\OO|$ is a manifold, $\partial |\OO|$, $\partial \OO$ and $|\partial \OO|$ are not necessarily the same. Note that an orbifold without boundary can have a manifold with boundary as its underlying space; we will see several examples in the following sections. 

It follows from the definition that a manifold without boundary is an orbifold whose groups $\Gamma_{i}$ are trivial.

It is also possible to assign an orbifold structure to a manifold with boundary in the following way: the intuitive idea is to double the manifold $M$ by reflecting on $\partial M$. Take $M$ a manifold with boundary, for each $x \in \partial M$ exists a neighborhood modelled on $\RR^{n}/\ZZ_{2}$ where $\ZZ_{2}$ acts on $\RR^{n}$ by reflection on the hyperplane $\{(x_{1}, \ldots , x_{n}) \in \RR^{n} \mid x_{n} \geq 0\}$. The points on $M - \partial M$ are already modelled by neighborhoods on $\RR^{n}$ because their associated groups $\Gamma_{i}$ are all trivial. We will denote this orbifold structure by $mM$. The same idea can be used to assign an orbifold structure to an orbifold with boundary, so we will always work with orbifolds without boundary.

The last construction gives rise to a natural question: Given a topological manifold $M$ and a finite group $\Gamma$, when does $M/\Gamma$ has an orbifold structure ? This is answered in the following proposition (Proposition $13.2.1$ of \cite{thurston-2023}).

\begin{prop}\label{M/Gamma is orbifold}
    Let $M$ be a manifold and $\Gamma$ be a group acting properly discontinuously on $M$, then $M/\Gamma$ has an orbifold structure. 
\end{prop}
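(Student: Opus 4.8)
The plan is to prove that if $\Gamma$ acts properly discontinuously on a manifold $M$, then the quotient $M/\Gamma$ admits an orbifold atlas satisfying the three conditions of the definition. The key tool is that proper discontinuity gives, for each point $\tilde{x} \in M$, a neighborhood $\widetilde{U}_{\tilde{x}}$ invariant under the isotropy (stabilizer) subgroup $\Gamma_{\tilde{x}} = \{\gamma \in \Gamma \mid \gamma \tilde{x} = \tilde{x}\}$ and such that $\gamma \widetilde{U}_{\tilde{x}} \cap \widetilde{U}_{\tilde{x}} = \emptyset$ for every $\gamma \notin \Gamma_{\tilde{x}}$. This is the standard ``slice''-type property packaged into proper discontinuity for finite-type actions, and it is exactly what lets the local quotient look like $\widetilde{U}_{\tilde{x}}/\Gamma_{\tilde{x}}$ with $\Gamma_{\tilde{x}}$ finite.

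First I would fix a point $x \in M/\Gamma$ with preimage $\tilde{x} \in M$ and choose, using proper discontinuity, an open neighborhood $\widetilde{V}$ of $\tilde{x}$ whose translates $\gamma \widetilde{V}$ are pairwise disjoint except for the finitely many $\gamma$ fixing $\tilde{x}$; intersecting over the finite stabilizer I would replace $\widetilde{V}$ by the $\Gamma_{\tilde{x}}$-invariant open set $\widetilde{U} = \bigcap_{\gamma \in \Gamma_{\tilde{x}}} \gamma \widetilde{V}$. The natural map $\widetilde{U}/\Gamma_{\tilde{x}} \to M/\Gamma$ is then injective (translates meeting $\widetilde{U}$ come only from $\Gamma_{\tilde{x}}$), continuous, and open, hence a homeomorphism onto an open set $U \subset M/\Gamma$; this supplies the chart $\varphi \colon U \to \widetilde{U}/\Gamma_{\tilde{x}}$ of condition (2), with $\Gamma_{\tilde{x}}$ finite because the action is properly discontinuous. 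Collecting these charts (and closing the resulting cover under finite intersection, shrinking charts as needed) gives condition (1).

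The remaining work is condition (3), the compatibility of charts. For two such charts with $U_i \subset U_j$, I would produce the equivariant embedding $\widetilde{\varphi}_{ij} \colon \widetilde{U}_i \to \widetilde{U}_j$ by comparing the two lifts of the same region of $M/\Gamma$ inside $M$: a point of $\widetilde{U}_i$ and the corresponding point of $\widetilde{U}_j$ map to the same point of $M/\Gamma$, so they differ by some $\gamma \in \Gamma$, and $\widetilde{\varphi}_{ij}$ is realized as translation by a suitable group element, with the injective homomorphism $f_{ij} \colon \Gamma_{\tilde{x}_i} \hookrightarrow \Gamma_{\tilde{x}_j}$ coming from the inclusion of stabilizers (conjugated into place). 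Checking that this is well defined up to the $\Gamma_j$-ambiguity noted after the definition, and that the quotient diagram commutes, is bookkeeping.

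The main obstacle I expect is not any single step but carefully extracting the invariant neighborhood with the disjointness property from the bare hypothesis of proper discontinuity, and then verifying that the transition embeddings $\widetilde{\varphi}_{ij}$ are genuine embeddings equivariant with respect to a well-defined (up to conjugacy) homomorphism $f_{ij}$ — i.e. that the $\Gamma$-element realizing the transition can be chosen consistently so the cocycle-type relation $\gamma \widetilde{\varphi}_{ik} = \widetilde{\varphi}_{jk} \circ \widetilde{\varphi}_{ij}$ holds. Since this is Proposition 13.2.1 of the cited reference, I would follow that source for these verifications rather than reprove every compatibility from scratch.
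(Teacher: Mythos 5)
The paper gives no proof of this proposition at all --- it simply cites Thurston's Proposition 13.2.1 --- and your sketch is exactly the standard argument behind that citation: extract a $\Gamma_{\tilde{x}}$-invariant neighbourhood $\widetilde{U}$ whose translates by elements outside the (finite) stabilizer miss $\widetilde{U}$, check that $\widetilde{U}/\Gamma_{\tilde{x}} \to M/\Gamma$ is an open injection onto a neighbourhood of $x$, and realize the transition embeddings by group elements; this is sound. The one point you leave implicit that is genuinely part of the statement (the underlying space of an orbifold is required to be Hausdorff) is that proper discontinuity of the action on a locally compact Hausdorff space forces $M/\Gamma$ to be Hausdorff, which deserves a line.
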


Henceforth we will use $M/\Gamma$ to denote the orbifold structure that arises from taking the quotient of $M$ by $\Gamma$. Note that the orbifold structure constructed in the case of a manifold with boundary $M$ is a particular case of this last proposition taking $\Gamma = \ZZ_{2}$ acting by reflection on $M$. Moreover, we have a similar result, which is Theorem $9.19$ of \cite{lee-2012}.

\begin{prop}\label{quotient of manifold by discrete group acting freely ...}
     If $M$ is a connected smooth manifold and $\Gamma$ is a discrete group acting smoothly, freely, and properly on $M$, then the quotient $\quot{M}{\Gamma}$ is a topological manifold and has a unique smooth structure such that $\pi : M \longrightarrow \quot{M}{\Gamma}$ is a smooth covering map.
\end{prop}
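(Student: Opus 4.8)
The plan is to follow the classical route for the quotient-manifold theorem in the special case of a discrete group, reducing everything to the fact that a free and proper action of a discrete group is a \emph{covering space action}: every point admits a neighborhood $U$ with $\gamma U \cap U = \emptyset$ for all $\gamma \neq e$. Once this is in hand, the topological and smooth structures on $\quot{M}{\Gamma}$ are obtained by transporting the structure of $M$ through the local homeomorphisms $\pi|_U$.

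First I would record the point-set consequences of the hypotheses. Since $M$ is a smooth manifold it is locally compact, Hausdorff, and second countable, and since each $\gamma$ acts as a diffeomorphism the quotient map $\pi$ is open: for open $V \subseteq M$ one has $\pi^{-1}(\pi(V)) = \bigcup_{\gamma} \gamma V$, a union of open sets, so $\pi(V)$ is open. Openness of $\pi$ together with second countability of $M$ then yields second countability of $\quot{M}{\Gamma}$ (the images of a countable basis form a basis), while properness yields that the orbit relation is closed in $M \times M$, from which Hausdorffness of $\quot{M}{\Gamma}$ follows.

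The heart of the argument is the covering lemma, which I expect to be the main obstacle, since it is where freeness and properness must be combined just right. Given $p \in M$ I would pick a precompact open neighborhood $W$; properness, in its discrete form (the set $\{\gamma : \gamma \overline{W} \cap \overline{W} \neq \emptyset\}$ is finite), leaves only finitely many offending $\gamma \neq e$, and for each of these freeness gives $\gamma p \neq p$, so Hausdorffness lets me separate $p$ from $\gamma p$ by disjoint opens and shrink $W$ to a neighborhood $U$ of $p$ with $\gamma U \cap U = \emptyset$ for every $\gamma \neq e$. The set-theoretic bookkeeping of which group elements survive each shrinking is the delicate part. From the lemma, $\pi|_U$ is a continuous open bijection, hence a homeomorphism, onto the open set $\pi(U)$, and $\pi^{-1}(\pi(U)) = \bigsqcup_{\gamma} \gamma U$ exhibits $\pi(U)$ as evenly covered, so $\pi$ is a topological covering map and $\quot{M}{\Gamma}$ is a topological manifold.

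Finally I would upgrade to the smooth category. Shrinking $U$ so that it sits inside the domain of a smooth chart $(V,\phi)$ of $M$, the maps $\phi \circ (\pi|_V)^{-1}$ are homeomorphisms onto open subsets of $\RR^n$, giving charts on $\quot{M}{\Gamma}$; on overlaps their transition maps factor through a deck transformation $\gamma$, which is a diffeomorphism because the action is smooth, composed with a smooth transition map of $M$, and are therefore smooth. This makes $\pi$ a local diffeomorphism, hence a smooth covering map. Uniqueness then follows because requiring $\pi$ to be a smooth local diffeomorphism forces every chart downstairs to agree, up to a smooth transition, with one of the form $\phi \circ (\pi|_V)^{-1}$; consequently any two smooth structures with this property induce the identity on $\quot{M}{\Gamma}$ as a diffeomorphism and hence coincide.
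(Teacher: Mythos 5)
Your argument is correct and is the standard proof of the quotient manifold theorem for discrete groups: establish that a free, proper, smooth action of a discrete group is a covering space action, then transport charts through the local homeomorphisms $\pi|_U$. The paper itself gives no proof of this statement --- it simply cites it as Theorem $9.19$ of \cite{lee-2012} --- and your sketch is essentially the argument found in that reference, so there is nothing to compare beyond noting that your reduction to the covering lemma, the even-covering of $\pi(U)$ by $\bigsqcup_{\gamma}\gamma U$, and the uniqueness of the smooth structure via the forced charts $\phi \circ (\pi|_{V})^{-1}$ are all sound.
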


Orbifolds which are global quotients by properly discontinuous actions are usually called \emph{good}, those which are quotients by finite groups are \emph{very good}. If they are not quotients of this type then are called \emph{bad} orbifolds. All the examples that have been given so far are good orbifolds, by construction. Let's see some more examples:

\begin{exam}[$p$-teardrop]
    A $p$-teardrop is an orbifold whose underlying space is $S^{2}$ and such that the set of singular points only consists of one point with $\ZZ_{p}$ as local group where $\ZZ_{p}$ acts by rotation of angle $\frac{2\pi}{p}$.
\end{exam}

\begin{figure}[h]
    \centering
    \includegraphics[width=0.2\textwidth]{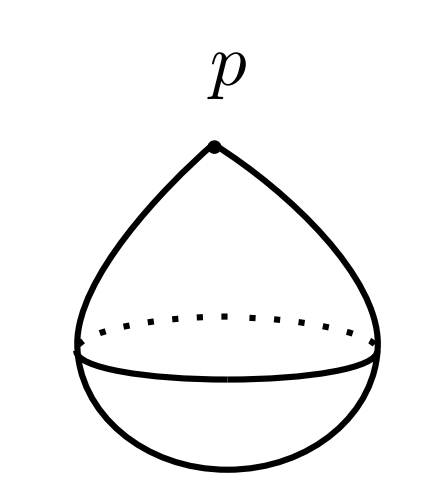}
    \caption{Teardrop of order $p$}
    \label{Teardrop}
\end{figure}

\begin{exam}[$(n,m)$-spindle]
    A $(n,m)$-spindle is an orbifold whose underlying space is $S^{2}$ and such that the set of singular points consists of two points $N,S$ with $\Gamma_{N}=\ZZ_{n}$ and $\Gamma_{S}=\ZZ_{m}$ where $\ZZ_{n}$ and $\ZZ_{m}$ act by rotation of angle $\frac{2\pi}{n}$ and $\frac{2\pi}{m}$ respectively.
\end{exam}

\begin{figure}[h]
    \centering
    \includegraphics[width=0.40\textwidth]{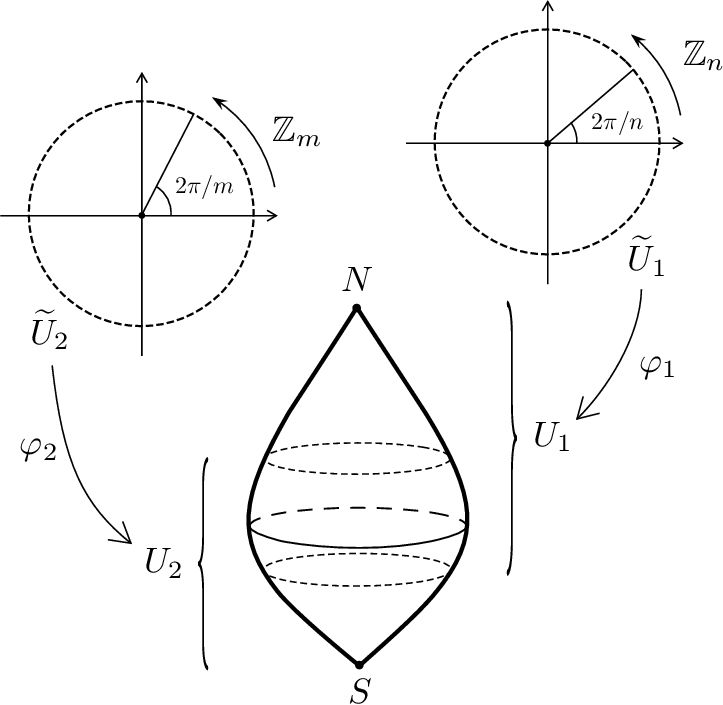}
    \caption{Spindle of order $(n,m)$}
    \label{Football}
\end{figure}

$p$-teardrops and  $(n,m)$-spindles where $n \neq m$ are examples of bad orbifolds. On the other hand, an orbifold with $S^{2}$ as underlying space with three or more singular points is good (as we will see in subsection \ref{section E Char and R Curvature}). An example of this is the $(p,q,r)$-turnover, which is the same as the $(p,q)$-spindle but with one more singular point $D$ with $\Gamma_{D}=\ZZ_{r}$ and $\ZZ_{r}$ acting by rotation of order $r$.

\begin{figure}[h]
    \centering
    \includegraphics[width=0.20\textwidth]{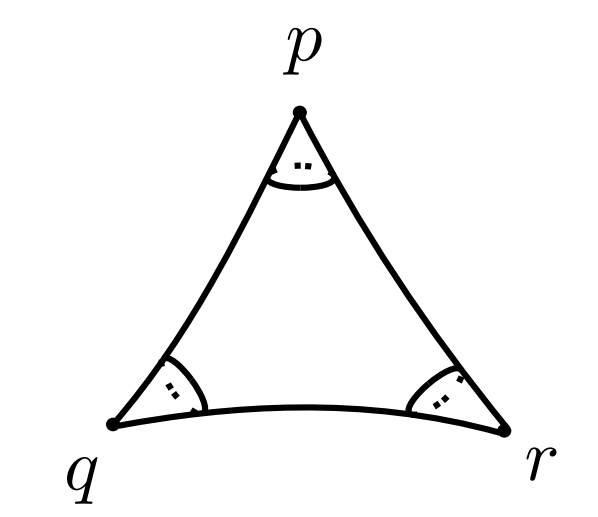}
    \caption{$(p,q,r)$-turnover}
    \label{turnover}
\end{figure}

\subsection{Coverings of an orbifold}

In this section we will define coverings of orbifolds, which will lead us to a definition of the fundamental group of an orbifold.

\begin{defi}
    A covering of an orbifold $\OO$ is a pair $(\PP,\rho)$, where $\PP$ is another orbifold and $\rho:\abs{\PP}\rightarrow\abs{\OO}$ is a surjective map such that for each point $x\in\abs{\OO}$ there is a neighborhood $U$ that admits a chart $(U,\widetilde{U},\Gamma,\phi)$, for which each component $V_i$ of $\rho^{-1}(U)$ admits a chart $(V_i,\widetilde{U},\Gamma_i,\phi_i)$ with $\Gamma_i<\Gamma$. If this holds, we say that $\PP$ is a covering space (or covering orbifold) of $\OO$, or just that $\PP$ covers $\OO$. Sometimes we write $\rho:\PP\rightarrow\OO$ for simplicity.
\end{defi}

\begin{exam}
    If an orbifold has a covering orbifold that is a manifold, we say that it is a good orbifold. Otherwise, we say it is a bad orbifold. This definition coincides with the one given after Proposition \ref{M/Gamma is orbifold}.
\end{exam}

The next lemma is also immediate.

\begin{rem}[Product of good orbifolds is a good orbifold]\label{Product of good orbifolds is a good orbifold}
    Let $\OO_{1}$ and $\OO_{2}$ be good orbifolds with coverings $(M_{1},\rho_{1})$ and $(M_{2},\rho_{2})$, respectively, where $M_{1}$ and $M_{2}$ are manifolds. Then, since the product of coverings is a covering and the product of manifolds is a manifold we conclude that $\OO_{1} \times \OO_{2}$ is a good orbifold with covering $(M_{1}\times M_{2},\rho_{1} \times \rho_{2})$.
\end{rem}

Some common good orbifolds are spherical, discal, annular and toric orbifolds, that are the quotient of, respectively, a sphere, a disk, an annulus and a torus by an isometric action.

\begin{exam}\hspace{0pt}
    By Proposition \ref{M/Gamma is orbifold} if a group $G$ acts properly discontinuously on a manifold $M$ then $M$ is a covering space for $M/G$. In general $M/H$ is a covering space of $M/G$ for each subgroup $H<G$. A particular example is that a cone of order $p$ covers every cone of order $kp$ for all $k\in\NN$.
\end{exam}
\begin{defi}
    The number of sheets of a covering is the cardinality of the preimage of a regular point by $\rho$. If this number is $k<\infty$ we say that the covering is $k$-sheeted.
\end{defi}
\begin{defi}
    A base point of a covering $(\PP,\rho)$ is a regular point $y\in\abs{\PP}$ that is mapped to a regular point in $\abs{\OO}$.
\end{defi}
\begin{defi}
    A universal covering of $\OO$ is a covering $(\PP,\rho)$ such that given any other connected covering $(\PP',\rho')$ and base points $y\in\abs{\PP}$ and $y'\in\abs{\PP'}$ that map to the same point $x\in\abs{\OO}$, there exists a unique covering $(\PP,\pi)$ of $\PP'$ (i.e., $\pi:\abs{\PP}\rightarrow\abs{\PP'}$) such that $\rho=\rho'\circ\pi$ and $\pi(y)=y'$. 
\end{defi}
\begin{teor}
Any connected orbifold $\OO$ admits a universal covering $(\PP,\rho)$ such that $\PP$ is connected. This universal covering is unique up to covering isomorphisms, which are morphisms $f:\PP_1\rightarrow\PP_2$ such that $\rho_1\circ f=\rho_2$ where $\rho_i:\PP_i\rightarrow\OO$, $i=1,2$ are coverings of $\OO$. 
\end{teor}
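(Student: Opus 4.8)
The plan is to establish existence by an explicit construction of $\PP$ out of homotopy classes of ``orbifold paths'' emanating from a fixed regular base point, and then to deduce uniqueness by a purely formal argument from the stated mapping property. The construction rests on the observation, already implicit in the definition of an orbifold, that $\OO$ is \emph{locally good}: every chart $U \cong \widetilde{U}/\Gamma$ is covered by the manifold $\widetilde{U}$, so universal covers exist locally and the only real work is to patch these local pieces coherently across the overlaps.

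For existence I would first fix a regular base point $x_{0} \in |\OO|$. I would then define an orbifold path from $x_{0}$ to $x$ as a finite sequence alternating between honest paths lying inside single charts and ``jumps'' recorded by group elements $\gamma \in \Gamma$ in the overlaps, capturing the monodromy of the chosen local lifts; equivalently, one works with a path in $|\OO|$ together with a compatible choice of lift in each chart it traverses. Next I would impose the coarsest equivalence relation generated by (a) ordinary homotopy rel endpoints inside a single chart, (b) subdivision and amalgamation of consecutive segments lying in a common chart, and (c) absorption of trivial jumps. Let $|\PP|$ be the resulting set of equivalence classes and let $\rho \colon |\PP| \to |\OO|$ send a class to the endpoint of any representative.

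To endow $\PP$ with an orbifold structure I would proceed chart by chart. Over a chart $(U, \widetilde{U}, \Gamma, \varphi)$ the preimage $\rho^{-1}(U)$ decomposes into components, each modelled on $\widetilde{U}$ quotiented by the subgroup of $\Gamma$ fixing the relevant class of lifts, which is exactly the local form required of a covering; the transition data of $\OO$ induces transition data on these pieces, and the homotopy relation is what guarantees they glue consistently. One then checks that $\rho$ is surjective with the prescribed local structure, that $|\PP|$ is connected since every class is joined to the class of the constant path by concatenation, and finally that the mapping property holds. For the latter, given any connected covering $(\PP', \rho')$ with base points $y \in |\PP|$ and $y' \in |\PP'|$ over $x_{0}$, unique lifting of orbifold paths to $\PP'$ produces a well-defined map $\pi \colon |\PP| \to |\PP'|$ with $\rho = \rho' \circ \pi$ and $\pi(y) = y'$; verifying that $\pi$ is itself a covering, and is the \emph{unique} such map, is the formal content of universality.

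Uniqueness is then standard. If $(\PP_{1}, \rho_{1}, y_{1})$ and $(\PP_{2}, \rho_{2}, y_{2})$ are both universal coverings over $x_{0}$, the mapping property of each yields covering maps $\pi_{12} \colon \PP_{1} \to \PP_{2}$ and $\pi_{21} \colon \PP_{2} \to \PP_{1}$ preserving base points. The composite $\pi_{21} \circ \pi_{12}$ is a self-covering of $\PP_{1}$ fixing $y_{1}$ and compatible with $\rho_{1}$; applying the uniqueness clause of the mapping property of $\PP_{1}$ with $\PP' = \PP_{1}$ forces it to be the identity, and symmetrically for the other composite, so $\pi_{12}$ and $\pi_{21}$ are mutually inverse covering isomorphisms. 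The main obstacle lies entirely in the existence half: making the orbifold structure on $\PP$ well defined and verifying local triviality of $\rho$ \emph{over the singular charts}, where the subgroups of the local groups $\Gamma_{x}$ governing the branching of the cover must be tracked correctly through the homotopy relation.
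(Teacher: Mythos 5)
The paper does not actually prove this theorem; it defers entirely to Proposition 13.2.4 of \cite{thurston-2023}, whose argument constructs the universal cover abstractly, by showing that any two based connected covers are dominated by a common one (via fiber products) and passing to a limit of the resulting directed system. Your path-based construction --- Haefliger-style orbifold paths consisting of segments with chosen lifts separated by ``jumps'' $\gamma\in\Gamma$ --- is therefore a genuinely different route from the one the paper points to. It is a legitimate standard alternative, and it buys something the abstract argument does not: an explicit description of the deck group as an orbifold fundamental group. Your uniqueness argument from the mapping property is correct and standard.

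There is, however, a real gap precisely at the point you yourself flag as the crux. Relation (a), ``ordinary homotopy rel endpoints inside a single chart,'' must mean homotopy of the \emph{chosen lifts} inside $\widetilde{U}$, not of their images in $U$, and you need an additional relation allowing a lift to be replaced by its translate under some $\gamma\in\Gamma$ at the cost of adjusting the adjacent jump elements. If homotopy is instead taken downstairs in $|U|$, the construction computes the universal cover of the underlying space and already fails for $\OO=\RR^{2}/\ZZ_{n}$: the underlying space is simply connected, yet the universal cover must be $\RR^{2}$ with deck group $\ZZ_{n}$, so the loop around the cone point has to survive as an element of order $n$, which happens only if triviality of a class is tested upstairs in $\widetilde{U}$. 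Until this is pinned down, the assertion that each component of $\rho^{-1}(U)$ is modelled on $\widetilde{U}/\Gamma'$ for a subgroup $\Gamma'<\Gamma$ cannot be verified, nor can the Hausdorffness and second countability of $|\PP|$. Separately, the universality step invokes ``unique lifting of orbifold paths'' to an arbitrary covering $\PP'$; this is itself a lemma that needs proof, since a path passing through a singular point admits several set-theoretic lifts and uniqueness holds only once lifts are required to respect the jump data on $\PP'$ as well.
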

A proof of this theorem can be found in \cite{thurston-2023} as Proposition 13.2.4.

\subsection{Euler characteristic and Riemann curvature}\label{section E Char and R Curvature}

Many topological and geometrical characteristics of manifolds can be generalized to orbifolds. In this section we will define the Euler characteristic of an orbifold and how can we endow it with a Riemannian metric. From now on we will be interested in smooth orbifolds. An alternative definition of orbifolds in the smooth case, as stated in \cite{porti-2006}, is the following:

\begin{defi}[Smooth orbifold]\label{defi smooth orbifold}
    A smooth n-orbifold $\OO$ is a second countable Hausdorff topological space, $|\OO|$, endowed with a collection $\{(U_{i}, \widetilde{U}_{i},\Gamma_{i},\phi_{i})\}_{i}$, called an atlas, where for each $i$, $U_{i}$ is an open subset of $\OO$, $\widetilde{U}_{i}$ is an open subset of $\RR^{n}$, $\phi_{i} \colon \widetilde{U}_{i} \longrightarrow U_{i}$ is a continuous map, which we call a chart; and $\Gamma_{i}$ is a finite group that acts on $\widetilde{U}_{i}$, that suffice the following conditions:
    \begin{enumerate}
        \item $\OO = \bigcup_{i}U_{i}$
        \item Each $\phi_{i}$ factors through a homeomorphism $\varphi_{i} \colon \widetilde{U}_{i}/\Gamma_{i} \longrightarrow U_{i}$
        \item The charts are compatible in the following sense: for every $x \in \widetilde{U}_{i}$ and $y \in \widetilde{U}_{j}$ with $\phi_{i}(x) = \phi_{j}(y)$, there is a diffeomorphism $\psi$ between a neighborhood $V$ of $x$ and a neighborhood $W$ of $y$ such that $\phi_{j}(\psi(z))= \phi_{i}(z)$ for all $z \in V$.
    \end{enumerate}
\end{defi} 
For convenience, we will always assume that the atlas is maximal.
 % We say that $\OO$ is orientable if it has an atlas $\{(U_{i}, \widetilde{U}_{i},\Gamma_{i},\phi_{i})\}_{i}$ such that each $\phi_{i}$ and all elements of each $\Gamma_{i}$ are orientation preserving. We say that $\OO$ is locally orientable if there is an atlas $\mathcal{B}=\left\{\left(\widetilde{U}_{i}, H_{i}, \phi_{i}\right)\right\} \in[\mathcal{A}]$ such that each $H_{i}$ acts by orientation-preserving diffeomorphisms of $\widetilde{U}_{i}$. In particular, in this case we can suppose all charts satisfy $H_{i}<\textbf{SO}(n)$.

Now let us backtrack to topological features for a bit:

\begin{defi}
    A triangulation of an orbifold $\OO$ is a triangulation of $|\OO|$, i.e. a homeomorphism between a simplicial complex and $|\OO|$. We say that a triangulation of $\OO$ is compatible if for every interior point of a cell, the group associated to it its the same (we say that it is constant).
\end{defi}
\begin{teor}
    Every smooth orbifold $\OO$ admits a compatible triangulation $T$.
\end{teor}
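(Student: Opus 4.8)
The plan is to reduce the statement to the existence of a triangulation of $|\OO|$ that is \emph{subordinate} to the natural stratification of $\OO$ by isomorphism type of local group, since on such a triangulation the local group is automatically constant on the interior of every cell, which is exactly the compatibility condition. So the real content is to identify a stratification of $|\OO|$ with nice (manifold) strata and to invoke a triangulation theorem respecting it.

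First I would equip $\OO$ with a Riemannian metric, obtained by averaging arbitrary local metrics over the finite groups $\Gamma_{i}$ and patching by a partition of unity on $|\OO|$; the smooth structure of Definition \ref{defi smooth orbifold} guarantees these exist. With respect to such a metric each $\Gamma_{i}$ acts by isometries on $\widetilde{U}_{i}$. Since an isometric action of a finite group fixing a point is linearizable through the exponential map (Bochner linearization), up to a diffeomorphism the local model around a point $x$ is $\RR^{n}/\Gamma_{x}$ with $\Gamma_{x}<O(n)$ acting linearly. This reduces all local questions to linear finite group actions, where everything is explicit.

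Next I would set up the stratification. For a conjugacy class of subgroup $H<O(n)$ put $\OO_{(H)}=\{x\in|\OO|:\Gamma_{x}\cong H\}$. In the linear local model the fixed-point sets $\Fix(K)$ of subgroups $K<\Gamma_{x}$ are linear subspaces, so the orbit-type pieces are smooth submanifolds and assemble into a Whitney stratified space; I would verify local finiteness and the frontier condition from these local models. Here Remark \ref{point inside fundamental chart implies injection of local groups} supplies the correct incidence relation: whenever $y$ lies in a fundamental chart of $x$ one has an injection $\Gamma_{y}\hookrightarrow\Gamma_{x}$, so strata indexed by larger groups lie in the closures of strata indexed by smaller ones, and the decomposition is globally consistent. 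Finally I would invoke the existence theorem for triangulations of Whitney stratified spaces subordinate to a prescribed stratification (Goresky's triangulation theorem); in the orbifold setting this can alternatively be deduced from Illman's equivariant triangulation theorem applied to the orthonormal frame bundle $\mathrm{Fr}(\OO)$, which is a smooth manifold carrying an $O(n)$-action with finite stabilizers and quotient $\OO$, so that an equivariant triangulation descends. As the local group is constant on each stratum, it is constant on the interior of each resulting cell.

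The hard part will be the stratification step together with the appeal to the subordinate-triangulation theorem: one must check that the orbit-type decomposition coming from the linear local models genuinely satisfies the Whitney (or abstract stratified) conditions that guarantee a subordinate triangulation, and then patch these local verifications into a global statement about $|\OO|$. The two facts that make this patching go through are precisely the linearization of the local isometric actions and the injection of local groups furnished by Remark \ref{point inside fundamental chart implies injection of local groups}; once the stratified structure is in place, compatibility of the triangulation is immediate.
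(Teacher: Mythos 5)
The paper does not prove this theorem itself but only cites Choi (Theorem 4.5.4), whose argument is essentially the route you outline: linearize the local isometric actions, stratify $|\OO|$ by orbit type (equivalently, pass to the orthonormal frame bundle with its $O(n)$-action), and take a triangulation subordinate to that stratification, on which the local group is automatically constant on open cells. Your proposal is correct in outline and matches that approach; the one substantive step you defer --- verifying that the orbit-type decomposition is a genuine stratification admitting a subordinate (equivariant) triangulation --- is exactly the content of the cited result, so as a blind reconstruction this is the right proof with the right key theorem identified.
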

A proof of this theorem can be found in \cite{choi} as Theorem $4.5.4$.

\begin{defi}\label{defi euler characteristic for orbifolds}
    The Euler characteristic of a compact orbifold $\OO$ with a compatible triangulation $T$ is \[\chi(\OO):=\sum_{\tau\in T}\frac{(-1)^{dim (\tau)}}{n_{\tau}}\]where $n_\tau=\abs{\Gamma_x}$ for any $x$ in the interior of each cell $\tau$.
\end{defi} 
Note that compactness is needed so that the triangulation is finite and the sum is well-defined.

\begin{rem}
    The Euler characteristic of an orbifold is not always an integer.
\end{rem}
\begin{rem}
    This formula does not depend on the triangulation.
\end{rem}

\begin{rem}
    The Euler characteristic is an invariant under homeomorphism.
\end{rem}

\begin{exam}\label{euler characteristic n-tardrop}
    A $n$-teardrop has Euler characteristic $1+\frac{1}{n}$, since it can be built using the usual two charts of the sphere, one quotiented by the trivial group (hence the $1$) and one quotiented by $\ZZ_n$ (hence the $\frac{1}{n}$).
\end{exam}
\begin{prop}\label{k-sheeted covering}
    If $(\PP,\rho)$ is a $k$-sheeted covering of $\OO$, then \[\chi(\PP)=k\chi(\OO)\]
\end{prop}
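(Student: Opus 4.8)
The plan is to compute both Euler characteristics from a single triangulation of $\OO$ lifted to $\PP$ through $\rho$, reducing the statement to a local counting argument carried out cell by cell. First I would fix a compatible triangulation $T$ of $\OO$ (which exists by the triangulation theorem quoted above) and, after barycentric subdivision if necessary, arrange that every cell $\tau \in T$ is contained in a fundamental chart $U_\tau$ centered at an interior point $x_\tau$ of $\tau$. Compatibility guarantees that the local group of every interior point of $\tau$ is the same group $\Gamma_{x_\tau}$ of order $n_\tau$, so in this fundamental chart $\Gamma_{x_\tau}$ fixes the lift $\widetilde\tau_0 \subset \widetilde{U_\tau}$ pointwise and $\widetilde\tau_0 \to \tau$ is a homeomorphism. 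Since $\OO$ is compact and $\rho$ is $k$-sheeted with $k$ finite, $\PP$ is compact, so $\chi(\PP)$ is well defined by Definition \ref{defi euler characteristic for orbifolds}.

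Next I would pull $T$ back along $\rho$ to obtain a triangulation $\widetilde T$ of $\PP$. Over each fundamental chart $U_\tau$ the covering condition gives $\rho^{-1}(U_\tau) = \bigsqcup_i V_i$ with charts $(V_i, \widetilde{U_\tau}, \Gamma_i, \phi_i)$ and $\Gamma_i < \Gamma_{x_\tau}$; because $\widetilde\tau_0$ is pointwise fixed by $\Gamma_{x_\tau}$, each $V_i = \widetilde{U_\tau}/\Gamma_i$ contains exactly one cell $\widetilde\tau_i$ lying over $\tau$, this cell maps homeomorphically onto $\tau$ (so $\dim\widetilde\tau_i = \dim\tau$), and its interior local group is exactly $\Gamma_i$, whence $n_{\widetilde\tau_i} = \abs{\Gamma_i}$. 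I would then check that the resulting cell structure is a genuine compatible triangulation of $\PP$ and that $\rho$ is simplicial with respect to $\widetilde T$ and $T$.

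The heart of the argument is the local identity
\[
\sum_{\widetilde\tau \mapsto \tau} \frac{1}{n_{\widetilde\tau}} = \frac{k}{n_\tau},
\]
where the sum runs over the cells of $\widetilde T$ mapping onto $\tau$. To prove it I would count the sheets over a regular point $y \in U_\tau$: its lift $\widetilde y \in \widetilde{U_\tau}$ has trivial $\Gamma_{x_\tau}$-stabilizer, so its $\Gamma_{x_\tau}$-orbit has $n_\tau$ points, and in $V_i = \widetilde{U_\tau}/\Gamma_i$ these split into $n_\tau/\abs{\Gamma_i}$ orbits under the free $\Gamma_i$-action; summing over the components gives $k = \sum_i n_\tau/\abs{\Gamma_i}$. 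Dividing by $n_\tau$ and using $n_{\widetilde\tau_i} = \abs{\Gamma_i}$ together with the bijection between the cells over $\tau$ and the components $V_i$ yields the identity.

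Finally I would assemble the pieces:
\[
\chi(\PP) = \sum_{\widetilde\tau \in \widetilde T} \frac{(-1)^{\dim\widetilde\tau}}{n_{\widetilde\tau}} = \sum_{\tau \in T} (-1)^{\dim\tau} \sum_{\widetilde\tau \mapsto \tau} \frac{1}{n_{\widetilde\tau}} = \sum_{\tau \in T} (-1)^{\dim\tau}\,\frac{k}{n_\tau} = k\,\chi(\OO),
\]
using that $\dim\widetilde\tau = \dim\tau$ for every lift. The main obstacle I anticipate is the rigorous verification that $T$ lifts to an honest compatible triangulation $\widetilde T$ with $\rho$ simplicial — in particular that, after subdivision, each cell sits inside a fundamental chart and admits exactly one pointwise-fixed lift per component $V_i$ — since the clean cell-by-cell bookkeeping, and hence the whole computation, rests on this geometric input; once it is in place, the sheet-counting identity itself is elementary.
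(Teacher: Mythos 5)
The paper does not actually prove this proposition itself --- it defers to Caramello, Proposition 2.4.2 --- and your strategy (lift a compatible triangulation along $\rho$, establish the cell-by-cell identity $\sum_{\widetilde\tau\mapsto\tau}1/n_{\widetilde\tau}=k/n_\tau$ by counting the fiber over a regular point, then sum over cells) is the standard argument and essentially the one in that reference. The sheet count over a regular point and the final assembly are correct as written.

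There is, however, one step in your normalization that fails: no amount of subdivision can arrange that every \emph{closed} cell $\tau$ lies in a fundamental chart of one of its own interior points. If a point $y$ lies in a chart with group $\Gamma$, then $\Gamma_y$ injects into $\Gamma$ (Remark \ref{point inside fundamental chart implies injection of local groups}), so a fundamental chart of $x_\tau$, whose group is $\Gamma_{x_\tau}$, cannot contain any boundary point of $\tau$ whose local group is strictly larger. Concretely, in $\RR^2/\ZZ_p$ every $2$-simplex incident to the cone point has trivial interior local group, yet its closure contains the cone point, which lies in no chart with trivial group; subdividing only reproduces the situation at smaller scale. The repair is standard and preserves your computation: take the triangulation subordinate to the cover by the distinguished charts $(U,\widetilde U,\Gamma,\phi)$ from the definition of covering (no fundamentality required), fix $x\in\mathring\tau$ and let $F=\phi^{-1}(x)$, a set of $\abs{\Gamma}/n_\tau$ points each with stabilizer of order $n_\tau$. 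The cells of $\widetilde T$ over $\tau$ inside $V_i$ correspond to the $\Gamma_i$-orbits on $F$, the one through $\widetilde x$ having local group $(\Gamma_i)_{\widetilde x}$, so
\begin{equation*}
\sum_{\widetilde\tau\subset V_i,\ \widetilde\tau\mapsto\tau}\frac{1}{n_{\widetilde\tau}}
=\sum_{[\widetilde x]\in F/\Gamma_i}\frac{\abs{\Gamma_i\cdot\widetilde x}}{\abs{\Gamma_i}}
=\frac{\abs{F}}{\abs{\Gamma_i}}
=\frac{\abs{\Gamma}}{n_\tau\,\abs{\Gamma_i}},
\end{equation*}
and summing over $i$ and using $k=\sum_i\abs{\Gamma}/\abs{\Gamma_i}$ (your free-action count over a regular point) recovers your identity. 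With this replacement, together with the verification you already flag that the lifted cells form an honest compatible triangulation, the proof goes through.
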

A proof of this proposition can be found in \cite{caramello} as Proposition 2.4.2.

One also has the following relation.

\begin{prop}\label{characteristic partition}
    Let $\OO$ be a compact orbifold and $X,Y \subseteq \OO$ compact suborbifolds, all of them with compatible triangulations. Then

    $$\chi(X \cup Y) = \chi(X) + \chi(Y) - \chi(X \cap Y)$$
\end{prop}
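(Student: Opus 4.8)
The plan is to mimic the classical inclusion–exclusion argument for the ordinary Euler characteristic, but carried out at the level of the orbifold Euler characteristic of Definition \ref{defi euler characteristic for orbifolds}, where each simplex is weighted by $\frac{1}{n_\tau}$. The key observation is that the defining formula is additive over cells in exactly the way that the ordinary simplicial count is, so the combinatorial identity $\abs{A \cup B} = \abs{A} + \abs{B} - \abs{A \cap B}$ for finite sets should upgrade verbatim once every cell carries its own fixed weight $\frac{(-1)^{\dim\tau}}{n_\tau}$.

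First I would pass to a single compatible triangulation $T$ that simultaneously triangulates $X$, $Y$, and $X \cap Y$ as subcomplexes, so that $X \cup Y$ is also a subcomplex and each of the four sets is a union of cells of $T$. This is where the hypotheses that $X$, $Y$ are compact suborbifolds with compatible triangulations get used: I would invoke the triangulation theorem (and a standard subdivision argument) to arrange that the triangulation of $X$ and the triangulation of $Y$ agree on the overlap, so that $T_{X} \cup T_{Y}$ is a genuine simplicial complex triangulating $X \cup Y$ with $T_{X} \cap T_{Y}$ triangulating $X \cap Y$. Crucially, because the triangulations are \emph{compatible}, the weight $n_\tau = \abs{\Gamma_x}$ attached to a cell $\tau$ is intrinsic to that cell and does not change according to whether we view $\tau$ as sitting inside $X$, inside $Y$, or inside the union; this is what lets the weighted sums match up term by term.

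Once the common triangulation is fixed, the proof is a direct computation. Writing $\chi(Z) = \sum_{\tau \in T_Z} \frac{(-1)^{\dim\tau}}{n_\tau}$ for each of $Z \in \{X, Y, X\cap Y, X \cup Y\}$, I would note that as sets of cells $T_{X \cup Y} = T_X \cup T_Y$ and $T_{X \cap Y} = T_X \cap T_Y$, and then apply inclusion–exclusion to the index set of the sum:
\begin{equation*}
\sum_{\tau \in T_X \cup T_Y} \frac{(-1)^{\dim\tau}}{n_\tau} = \sum_{\tau \in T_X} \frac{(-1)^{\dim\tau}}{n_\tau} + \sum_{\tau \in T_Y} \frac{(-1)^{\dim\tau}}{n_\tau} - \sum_{\tau \in T_X \cap T_Y} \frac{(-1)^{\dim\tau}}{n_\tau}.
\end{equation*}
Since each summand $\frac{(-1)^{\dim\tau}}{n_\tau}$ depends only on the cell $\tau$ and not on which subcomplex it is regarded in, the identity is just the statement that a sum over a union equals the sum over each piece minus the sum over the intersection, which is the additivity already used implicitly in Definition \ref{defi euler characteristic for orbifolds}. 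This immediately yields $\chi(X \cup Y) = \chi(X) + \chi(Y) - \chi(X \cap Y)$.

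The main obstacle I anticipate is not the algebra but the geometric bookkeeping in the first step: ensuring that one can always choose a single compatible triangulation of $\OO$ restricting to compatible triangulations of $X$, $Y$, and their intersection simultaneously, with all weights $n_\tau$ consistent. This requires knowing that $X \cap Y$ is again a (compact) suborbifold so that it too admits a compatible triangulation, and that the triangulation of the ambient $\OO$ can be subdivided to respect all three subspaces at once. I would address this by appealing to the triangulation theorem stated earlier together with the invariance of $\chi$ under the choice of triangulation (the remark following Definition \ref{defi euler characteristic for orbifolds}), which frees me to replace any given triangulations by a common refinement without changing any of the four Euler characteristics.
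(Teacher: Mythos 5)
The paper states this proposition without giving any proof, so there is nothing to compare against; your argument supplies exactly the justification the paper implicitly relies on. Your inclusion--exclusion over a common compatible triangulation, using that the weight $\frac{(-1)^{\dim\tau}}{n_\tau}$ is intrinsic to each cell, is correct, and you rightly flag the only real point of care (arranging a single triangulation in which $X$, $Y$, and $X\cap Y$ are all subcomplexes, which the triangulation theorem and the independence of $\chi$ from the triangulation let you assume).
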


Now we want to endow orbifolds with a Riemannian metric. First, let's briefly recall what is a Riemannian metric:
\begin{defi}[Riemannian metric]
    A Riemannian manifold is a smooth manifold $M$ equipped with a family of positive-definite inner products $g_p$ on the tangent space $T_pM$ for each $p\in M$. This family of inner products is called a Riemannian metric. 
\end{defi}
Riemannian metrics are used to define geometric notions in manifolds, such as angles, length, areas or curvatures. It can be proved, using partitions of unity, that every smooth manifold admits a Riemannian metric. Similarly, it can be proved for orbifolds. Indeed, we have the followin result (Proposition $4.11$ of \cite{caramello}).
\begin{prop}\label{smooth orbifold admits Riemmaniann metric}
    Any smooth orbifold admits a Riemannian metric.
\end{prop}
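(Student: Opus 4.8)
The plan is to adapt the classical partition-of-unity argument used for manifolds, supplementing it with the standard averaging trick over the finite local groups to guarantee invariance. Recall first what a Riemannian metric on an orbifold should be: using the atlas $\{(U_i, \widetilde{U}_i, \Gamma_i, \varphi_i)\}_i$, it is a collection of Riemannian metrics $g_i$ on each uniformizing chart $\widetilde{U}_i \subseteq \RR^n$, each invariant under the action of $\Gamma_i$, and compatible with the transition data in the sense that the equivariant embeddings $\widetilde{\varphi}_{ij}$ pull $g_j$ back to $g_i$ on the relevant overlaps. The invariance is exactly what allows each $g_i$ to descend to the quotient $\widetilde{U}_i / \Gamma_i \cong U_i$, and the compatibility is what makes the pieces fit into a single global object.

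First I would carry out the local construction. On each chart $\widetilde{U}_i$, start with the restriction of the standard Euclidean metric $g^{\mathrm{eucl}}$ of $\RR^n$. This need not be $\Gamma_i$-invariant, so I would average it over the finite group, setting
$$\bar{g}_i = \frac{1}{|\Gamma_i|} \sum_{\gamma \in \Gamma_i} \gamma^{*} g^{\mathrm{eucl}}.$$
Since each $\gamma$ acts as a diffeomorphism of $\widetilde{U}_i$, each pullback $\gamma^{*} g^{\mathrm{eucl}}$ is again a Riemannian metric, and a finite positive-weighted sum of positive-definite inner products is positive-definite; hence $\bar{g}_i$ is a genuine Riemannian metric. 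A direct check using the invariance of the counting measure on $\Gamma_i$ shows that $\bar{g}_i$ is $\Gamma_i$-invariant.

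Next I would glue these local metrics together. For this I need an orbifold partition of unity subordinate to the cover $\{U_i\}_i$: a family of invariant smooth functions $\rho_i$ on the charts, each descending to $U_i$, supported there, nonnegative, and summing to $1$. These are obtained just as in the manifold case from bump functions, again averaging each over $\Gamma_i$ to make it invariant; second countability of $|\OO|$ ensures local finiteness. I would then define the global metric chart-by-chart as $g = \sum_i \rho_i\, \bar{g}_i$, where on each overlap the summands are transported using the equivariant embeddings $\widetilde{\varphi}_{ij}$. Positive-definiteness is immediate since at every point this is a positive combination of the positive-definite forms $\bar{g}_i$, and invariance on each chart is preserved because both $\rho_i$ and $\bar{g}_i$ are invariant.

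I expect the main obstacle to be precisely the compatibility on overlaps: one must verify that the locally averaged metrics, once combined through the partition of unity, assemble into a single well-defined metric rather than a mere collection of unrelated local pieces. This amounts to checking that the $\widetilde{\varphi}_{ij}$ intertwine the $\bar{g}_i$ correctly, up to the action of $\Gamma_j$, using the equivariance $\widetilde{\varphi}_{ij}(\gamma x) = f_{ij}(\gamma)\widetilde{\varphi}_{ij}(x)$ from the definition of the atlas together with the cocycle relation $\gamma \widetilde{\varphi}_{ik} = \widetilde{\varphi}_{jk} \circ \widetilde{\varphi}_{ij}$. Once this bookkeeping is settled, the remaining steps are the same routine verifications as for smooth manifolds.
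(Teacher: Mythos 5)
Your argument is correct and is essentially the approach the paper itself points to: the paper does not write out a proof but cites Proposition $4.11$ of \cite{caramello} and remarks that one constructs a locally finite atlas with a subordinate partition of unity built from $\Gamma_i$-invariant functions, which is exactly your averaging-plus-partition-of-unity scheme. The only comment worth adding is that the overlap compatibility you flag as the main obstacle is not actually needed for the raw metrics $\bar{g}_i$: each term $\rho_i\,\bar{g}_i$, extended by zero outside $\supp\rho_i$, already defines a global invariant symmetric $2$-tensor on the orbifold, so the sum $\sum_i \rho_i\,\bar{g}_i$ is well defined and positive definite without having to intertwine the $\bar{g}_i$ themselves via the $\widetilde{\varphi}_{ij}$.
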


\begin{rem}
    The existence of the locally finite orbifold atlas and its subordinate partition of unity is proved similarly to the case of manifolds, one just has to use $\Gamma_i$-invariant functions on each $\widetilde{U}_i$. A proof can be found in \cite{AST} as Lemma $ 4.2.1$.
\end{rem}
A Riemannian metric on a two-orbifold can be used to define the Gauss curvature of the orbifold, which is (in general) different at each point and can be interpreted as whether the orbifold at a given point is locally spherical (positive curvature, called elliptic point), locally saddle-like (negative curvature, called hyperbolic point) or locally flat/cylindrical (zero curvature, called parabolic point). This curvature is an intrinsic property of the two-orbifold, i.e., it does not depend on how it is embedded in an euclidean space.

\begin{prop}\label{every smooth orbifold is locally homeomorphic to R^n/Gamma}
    Every smooth orbifold is locally homeomorphic to $\RR^{n}/\Gamma$ where $\Gamma$ is some finite subgroup of $O(n)$.
\end{prop}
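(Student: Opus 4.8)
The plan is to reduce the statement to a linearization problem for a smooth finite group action near a fixed point, and then solve that by averaging a Riemannian metric and applying the exponential map. First I would fix a point $x \in \OO$ and, using the remark that every point admits a fundamental chart, choose a chart $(\widetilde{U}, \Gamma_{x}, \varphi)$ together with a lift $\widetilde{x} \in \widetilde{U}$ of $x$ whose isotropy group is all of $\Gamma_{x}$; after a translation we may assume $\widetilde{x} = 0 \in \RR^{n}$. Thus $\Gamma_{x}$ acts smoothly on the open set $\widetilde{U} \subseteq \RR^{n}$ fixing the origin, and a neighborhood of $x$ in $|\OO|$ is homeomorphic to $\widetilde{U}/\Gamma_{x}$. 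It therefore suffices to produce a diffeomorphism from a neighborhood of $0$ onto a neighborhood of $0$ that conjugates the given $\Gamma_{x}$-action to a linear orthogonal action, since such a diffeomorphism descends to a homeomorphism of the quotients.

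Second, I would linearize the action. By Proposition \ref{smooth orbifold admits Riemmaniann metric} (or directly by averaging any metric $g_{0}$ over the finite group, setting $g = \frac{1}{|\Gamma_{x}|}\sum_{\gamma \in \Gamma_{x}} \gamma^{*}g_{0}$) we obtain a $\Gamma_{x}$-invariant Riemannian metric $g$ on a neighborhood of $0$, for which every $\gamma \in \Gamma_{x}$ acts as an isometry. For such a metric the exponential map $\exp_{0} \colon T_{0}\widetilde{U} \to \widetilde{U}$ is a diffeomorphism from a neighborhood of $0$ in the tangent space onto a neighborhood of $0$, and because each $\gamma$ is an isometry fixing $0$ it carries geodesics to geodesics, giving the equivariance relation $\gamma \cdot \exp_{0}(v) = \exp_{0}\bigl(d\gamma_{0}(v)\bigr)$ for small $v$. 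Here $d\gamma_{0}$ is a linear map of $T_{0}\widetilde{U}$, and since $\gamma$ is a $g$-isometry $d\gamma_{0}$ preserves the inner product $g_{0}$ on $T_{0}\widetilde{U}$; identifying $(T_{0}\widetilde{U}, g_{0})$ with the standard $\RR^{n}$ by a linear isometry, the assignment $\gamma \mapsto d\gamma_{0}$ becomes a homomorphism $\rho \colon \Gamma_{x} \to O(n)$. The equivariance relation says precisely that $\exp_{0}$ conjugates the linear action $\rho$ to the original action of $\Gamma_{x}$.

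Finally I would check faithfulness and conclude. The homomorphism $\rho$ is injective: if $d\gamma_{0} = \mathrm{id}$, then by equivariance $\gamma$ fixes a whole neighborhood of $0$ pointwise, and effectiveness of the local group action forces $\gamma$ to be the identity; hence $\Gamma := \rho(\Gamma_{x})$ is a finite subgroup of $O(n)$ isomorphic to $\Gamma_{x}$. Read through the identification $T_{0}\widetilde{U} \cong \RR^{n}$, the equivariant diffeomorphism $\exp_{0}$ then induces a homeomorphism between a neighborhood of $0$ in $\widetilde{U}/\Gamma_{x}$ and a neighborhood of $0$ in $\RR^{n}/\Gamma$, which together with $\varphi$ exhibits the desired local homeomorphism at $x$. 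I expect the main obstacle to be the equivariance of the exponential map, i.e. the Bochner linearization step: one must verify carefully that an isometry fixing a point sends geodesics to geodesics and so intertwines $\exp_{0}$ with its own differential. Everything else — the averaging, the orthogonality of $d\gamma_{0}$, and the passage to quotients — is routine once this is in place.
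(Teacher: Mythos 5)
Your proof is correct and follows essentially the same route as the source the paper relies on: the paper itself gives no proof of this proposition, deferring to Thurston, and the argument there is exactly this Bochner linearization --- average a Riemannian metric over $\Gamma_{x}$ and use the equivariance of $\exp_{0}$ under isometries fixing the lift of $x$ to conjugate the local action to the orthogonal action of the differentials $d\gamma_{0}$. The one hypothesis you invoke that the paper's definition does not state explicitly is effectiveness of the local actions, but this only affects injectivity of $\gamma \mapsto d\gamma_{0}$, not the conclusion, since the image is in any case a finite subgroup of $O(n)$ defining the same quotient.
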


A proof of this Proposition can be found in \cite{thurston-2023}. Local models in dimensions $1$ and $2$ can be easily described using Proposition \ref{every smooth orbifold is locally homeomorphic to R^n/Gamma}.

\begin{rem}[Local models for compact $1$-orbifolds]
    From this last result we have that the only singular points that a compact $1$-orbifold can have is mirror points. 
\end{rem}
\begin{rem}\label{classification of compact 1-orbifolds}
    From last remark we conclude that we only have four types of compact $1$-orbifolds: $[0,1]$, $S^{1}$, the interval with one mirror point ($M_{1}$) and the interval with two mirror points ($M_{2}$).
\end{rem}

In the following all possible singular points in two-orbifolds are described. 

\begin{prop}
    All singular points in a two-orbifold are of one of the following types:
    \begin{enumerate}
        \item Mirror points, whose local group is $\ZZ_2$ and act on $\RR^2$ by reflection in the $y$-axis.
        \item Elliptic or cone points of order $n$, whose local group is $\ZZ_n$ and act on $\RR^2$ by rotations.
        \item Corner reflectors of order $n$, whose local group is $D_n$ and if we write $D_n$ as \[\langle a,b \mid a^2=b^2=(ab)^n=1\rangle\]$a$ and $b$ correspond to reflections in lines meeting at angle $\frac{\pi}{n}$. 
    \end{enumerate}
\end{prop}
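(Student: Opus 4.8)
The plan is to reduce the statement to a purely group-theoretic classification and then read off the geometry of each quotient. By Proposition \ref{every smooth orbifold is locally homeomorphic to R^n/Gamma}, every point $x$ of a two-orbifold has a neighborhood homeomorphic to $\RR^2/\Gamma$ for some finite $\Gamma \subseteq O(2)$, and by the remark that a point always admits a fundamental chart with $\varphi(0)=x$ we may center the chart so that $x$ corresponds to the origin. Since every element of $O(2)$ fixes the origin, the isotropy group there is all of $\Gamma$, so $\Gamma_x = \Gamma$. Thus $x$ is singular exactly when $\Gamma$ is nontrivial, and classifying the singular points amounts to classifying the nontrivial finite subgroups of $O(2)$ up to conjugacy together with their linear action, since conjugation in $O(2)$ is merely an orthogonal change of coordinates and hence preserves the local type.

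First I would establish the classification of finite subgroups of $O(2)$. Writing $\Gamma^{+} = \Gamma \cap SO(2)$ for the rotational part, every finite subgroup of $SO(2) \cong S^{1}$ is cyclic, so $\Gamma^{+} = \ZZ_{n}$ for some $n \geq 1$, generated by rotation through $2\pi/n$. If $\Gamma = \Gamma^{+}$ then $\Gamma$ is cyclic; otherwise $\Gamma$ contains a reflection, the determinant homomorphism $\Gamma \to \{\pm 1\}$ is surjective with kernel $\Gamma^{+}$, so $[\Gamma:\Gamma^{+}]=2$ and $\Gamma = D_{n}$ is dihedral of order $2n$, generated by $\Gamma^{+}$ together with one reflection. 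This gives exactly the cyclic and dihedral families.

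Next I would analyze each case geometrically. If $\Gamma = \ZZ_{n}$ with $n \geq 2$ acts by rotations, the only fixed point is the origin, every other orbit is free, and the quotient is a cone; this is a cone point of order $n$. The dihedral case splits according to $n$: for $n=1$ we have $D_{1} = \{1,s\} \cong \ZZ_{2}$ with $s$ a reflection whose fixed set is a line, so points on that line carry local group $\ZZ_{2}$ acting by reflection, the mirror points. For $n \geq 2$ the group $D_{n} = \langle a,b \mid a^{2}=b^{2}=(ab)^{n}=1\rangle$ has $n$ reflections fixing $n$ lines through the origin; since two reflections in lines meeting at angle $\theta$ compose to a rotation through $2\theta$ and $ab$ must be the rotation of order $n$, consecutive mirror lines meet at angle $\pi/n$. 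The origin is fixed by all of $D_{n}$ and is the corner reflector of order $n$, while the non-origin points of the mirror lines are ordinary mirror points. This exhausts all nontrivial finite subgroups and their actions, yielding exactly the three listed types.

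The main obstacle is not any single deep step but rather organizing the case analysis so that isomorphic abstract groups with inequivalent actions are distinguished correctly: the crux is that the type of a singular point is determined not by the isomorphism class of $\Gamma_{x}$ but by its conjugacy class as a subgroup of $O(2)$, which is why $\ZZ_{2}$ occurs both as a mirror point (reflection) and as a cone point of order $2$ (rotation through $\pi$), these two being non-conjugate in $O(2)$. The one computation requiring care is verifying that consecutive reflection lines in $D_{n}$ meet at angle exactly $\pi/n$ rather than some other submultiple of $\pi$, and this follows from the relation $(ab)^{n}=1$ forcing $ab$ to be the primitive rotation of the group.
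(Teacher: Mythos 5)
Your proposal is correct and follows the same route as the paper, which simply invokes the classification of finite subgroups of $O(2)$ into cyclic and dihedral groups. In fact your version is more careful than the paper's one-sentence proof: you rightly note that the singular type is determined by the conjugacy class of $\Gamma_x$ in $O(2)$ and not merely its isomorphism class (so $\ZZ_2$ acting by reflection versus by rotation through $\pi$ gives distinct types), and you verify the $\pi/n$ angle between mirror lines, both of which the paper leaves implicit.
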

\begin{proof}
    All finite subgroups of $O(2)$ are isomorphic to either $\ZZ_n$ or $D_n$, so there are no more possible cases.
\end{proof}
Henceforth, all orbifolds are compact. 

We can now give a more specific formula for the Euler characteristic of two-orbifolds:

\begin{prop} \label{Euler characteristic for two-orbifolds}If $\OO$ is a two-orbifold with $k$ cone points of orders $n_1,\dots,n_k$ and $l$ corner reflectors of orders $m_1,\dots,m_l$ then
    \[\chi(\OO)=\chi(\abs{\OO})-\frac{1}{2}\sum_{i=1}^{l} \left(1-\frac{1}{m_i} \right)-\sum_{i=1}^{k} \left(1-\frac{1}{n_i} \right).\]
\end{prop}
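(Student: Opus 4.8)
The plan is to compute the Euler characteristic directly from the triangulation formula in Definition \ref{defi euler characteristic for orbifolds}, by choosing a compatible triangulation adapted to the singular locus and then accounting for the contribution of each singular cell. The key observation is that the singular points of a two-orbifold (cone points, mirror points, corner reflectors) are isolated points or arcs, and each contributes a correction term relative to the Euler characteristic of the underlying space $\abs{\OO}$, which is obtained by setting all local group orders $n_\tau$ equal to $1$.

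\textbf{First I would} fix a compatible triangulation $T$ of $\OO$ that is simultaneously a triangulation of $\abs{\OO}$ in which every cone point, every corner reflector, and the entire mirror locus appear as subcomplexes. By compatibility, the local group $\Gamma_x$ is constant on the interior of each cell, so the weight $n_\tau = \abs{\Gamma_x}$ is well defined. Then I would write
\[
\chi(\OO) = \sum_{\tau \in T} \frac{(-1)^{\dim \tau}}{n_\tau}
= \sum_{\tau \in T} (-1)^{\dim \tau} + \sum_{\tau \in T} (-1)^{\dim \tau}\left( \frac{1}{n_\tau} - 1 \right),
\]
where the first sum is exactly $\chi(\abs{\OO})$, since every cell is counted with weight $1$. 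The problem thus reduces to evaluating the \emph{defect sum} $\sum_\tau (-1)^{\dim \tau}(\tfrac{1}{n_\tau}-1)$, whose only nonzero contributions come from cells lying in the singular locus $\Sigma_\OO$.

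\textbf{Next I would} split the defect sum according to the three types of singular strata. For an isolated cone point of order $n_i$, the only singular cell is the vertex itself, with $\dim \tau = 0$ and $n_\tau = n_i$, contributing $(\tfrac{1}{n_i}-1) = -(1-\tfrac{1}{n_i})$; summing over the $k$ cone points yields the term $-\sum_{i=1}^k (1-\tfrac{1}{n_i})$. For a mirror edge, the local group is $\ZZ_2$ everywhere along it, so the edges (dimension $1$) and the interior vertices (dimension $0$) of the mirror locus all carry weight $n_\tau = 2$; along a closed mirror arc the alternating count of its own vertices and edges telescopes to the Euler characteristic of that arc (an interval or circle), and multiplying by the common defect factor $(\tfrac12 - 1) = -\tfrac12$ shows the mirror boundary contributes $-\tfrac12 \chi$ of itself, which is $0$ for a closed mirror circle and is absorbed into the corner-reflector count otherwise. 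For a corner reflector of order $m_j$, the local group is the dihedral group $D_{m_j}$ of order $2m_j$, so the vertex contributes $(\tfrac{1}{2m_j}-1)$; combining this vertex with the two adjacent mirror half-edges meeting at it is what produces the coefficient $-\tfrac12(1-\tfrac{1}{m_j})$ after the half-edge weights $\tfrac12$ are correctly attributed.

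\textbf{The hard part will be} the bookkeeping for the mirror locus and the corner reflectors: I must verify that the alternating sum over the one-dimensional mirror strata, together with their endpoints, collapses cleanly so that each corner reflector of order $m_j$ leaves precisely the residual term $-\tfrac12(1-\tfrac1{m_j})$ and no spurious contribution survives from the mirror edges themselves. Concretely, one treats each connected mirror component as a $1$-orbifold (by Remark \ref{classification of compact 1-orbifolds} it is $S^1$, $[0,1]$, $M_1$, or $M_2$) and checks that its weighted alternating cell count, minus its unweighted count, reduces to contributions localized at the corner-reflector endpoints. Once this telescoping is confirmed, assembling the three pieces gives exactly
\[
\chi(\OO) = \chi(\abs{\OO}) - \frac12 \sum_{i=1}^{l}\left(1 - \frac{1}{m_i}\right) - \sum_{i=1}^{k}\left(1 - \frac{1}{n_i}\right),
\]
as claimed.
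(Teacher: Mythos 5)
Your proposal is correct, but it takes a genuinely different route from the paper: the paper does not prove this proposition at all, it simply cites Scott's survey \cite{Scott}, whereas you give a direct computation from the paper's own Definition \ref{defi euler characteristic for orbifolds}. The decomposition $\chi(\OO)=\chi(\abs{\OO})+\sum_\tau(-1)^{\dim\tau}\bigl(\tfrac{1}{n_\tau}-1\bigr)$ correctly localizes everything on the singular locus, and the one step you flag as delicate --- the telescoping along the mirror locus --- does check out: a mirror circle of $\abs{\OO}$ carrying $c$ corner reflectors of orders $m_{j_1},\dots,m_{j_c}$ has $V=E$ in any triangulation in which it is a subcomplex, so its defect sum is
\[
\sum_{t=1}^{c}\Bigl(\tfrac{1}{2m_{j_t}}-1\Bigr)+(V-c)\Bigl(\tfrac12-1\Bigr)-E\Bigl(\tfrac12-1\Bigr)=\sum_{t=1}^{c}\Bigl(\tfrac{1}{2m_{j_t}}-\tfrac12\Bigr)=-\frac12\sum_{t=1}^{c}\Bigl(1-\tfrac{1}{m_{j_t}}\Bigr),
\]
which vanishes for a pure mirror circle and yields exactly the claimed coefficient at each corner reflector. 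Two small points of care: for a compact $2$-orbifold the mirror components are circles (with corner reflectors as marked points), so the $V=E$ count is the only case needed and your appeal to the four compact $1$-orbifolds of Remark \ref{classification of compact 1-orbifolds} is more than required; and your argument tacitly invokes independence of the triangulation, which the paper only asserts after Definition \ref{defi euler characteristic for orbifolds} and in fact deduces in dimension two \emph{from} this very proposition, so strictly you should fix one adapted triangulation and observe that the resulting right-hand side is manifestly triangulation-free. What your approach buys is a self-contained, elementary proof from the paper's own definition of $\chi$; what the paper's citation buys is only brevity.
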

A proof of this formula can be found in \cite{Scott}.

\begin{rem}
    This formula only takes into account the number and types of singular points. Hence, it proves that the Euler characteristic does not depend on the triangulation on two-orbifolds.
\end{rem}

Next definition, which is a generalisation for orbifolds of the definition for manifolds, will play a key role in the proof of Theorem \ref{big theorem}.

\begin{defi}[Neat suborbifold]\label{neat suborbifold}
    Let $\OO$ be an orbifold with boundary and $\SSS$ a closed suborbifold with boundary of $\OO$. We say that $\SSS$ is a \emph{neat suborbifold} if 
    $$\partial \SSS = \SSS \cap \partial \OO$$
\end{defi}

\begin{rem}
    In particular if $\partial \SSS = \emptyset$ then $\SSS$ and $\partial \OO$ are disjoint and therefore $\SSS$ is a suborbifold of $\mathring{\OO}$.
\end{rem}

\begin{rem}
    Let $\OO$ be an orbifold. Let $\SSS$ be a suborbifold such that $\SSS \cap U$ is a neat suborbifold of $U$ for any local chart $U$. Then $\SSS$ is a neat suborbifold of $\OO$.
\end{rem}

\begin{teor}\label{neat submanifold has tubular neighborhood}
    If $M$ is a manifold with boundary and $N$ is a neat submanifold of $M$, then there exists a tubular neighborhood of $N$ in $M$. 
\end{teor}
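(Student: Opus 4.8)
The plan is to reduce the statement to the classical tubular neighborhood theorem for submanifolds of a manifold \emph{without} boundary by passing to the double. Write $DM := M \cup_{\partial M} M$ for the double of $M$; this is a smooth closed manifold carrying the involution $\sigma$ that interchanges the two copies of $M$ and fixes $\partial M$ pointwise, with $\Fix(\sigma) = \partial M$. First I would fix a collar $c \colon \partial M \times [0,1) \hookrightarrow M$ and, using that $N$ is neat (in particular transverse to $\partial M$, as in the manifold definition being generalised), isotope $c$ so that $N \cap c(\partial M \times [0,1)) = c(\partial N \times [0,1))$. With such a compatible collar the doubled subspace $DN := N \cup_{\partial N} N$ is a smooth closed submanifold of $DM$: the two copies of $N$ are glued exactly along $\partial N = N \cap \partial M$, and the product structure of $N$ through the collar makes this gluing smooth. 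By construction $DN$ is $\sigma$-invariant and $DN \cap M = N$.

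Next I would apply the classical tubular neighborhood theorem to the closed submanifold $DN \subset DM$, but take care to produce a $\sigma$-equivariant output. To do this I would first average an arbitrary Riemannian metric over the order-two group $\{1,\sigma\}$ to obtain a $\sigma$-invariant metric on $DM$, and then define the tubular neighborhood $T$ of $DN$ as the image of the normal exponential map $\exp^{\perp}$ on the $\varepsilon$-disk subbundle of $\nu(DN)$. Because $\sigma$ is an isometry of this metric preserving $DN$, it carries normal geodesics of $DN$ to normal geodesics of $DN$, so $\sigma(T) = T$.

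Finally I would set $T_M := T \cap M$. At a boundary point $p \in \partial N$, the normal space $\nu_p(DN)$ lies inside $T_p(\partial M)$, hence is fixed by $d\sigma_p$; consequently each normal geodesic $\exp_p(tv)$ with $v \in \nu_p(DN)$ is pointwise fixed by the isometry $\sigma$ and therefore lies in $\Fix(\sigma) = \partial M$. Thus over $\partial N$ the neighborhood $T$ stays inside $\partial M$, restricting there to a tubular neighborhood of $\partial N$ in $\partial M$. At interior points of $N$ the normal geodesics remain in the corresponding copy of $M$ for small $t$, so $T_M$ is precisely the image under $\exp^{\perp}$ of the normal vectors based in the $M$-copy, i.e. of $\nu(N)$ with the convention that over $\partial N$ these are the normals taken inside $\partial M$. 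This exhibits $T_M$ as a tubular neighborhood of the neat submanifold $N$ in $M$, with $\partial N \subset \partial M$ as required.

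The main obstacle is the equivariance step: without arranging $T$ to be $\sigma$-invariant, intersecting with $M$ could produce a neighborhood whose fibers cross $\partial M$ in an uncontrolled way, failing to meet the boundary cleanly. Using the exponential map of a $\sigma$-invariant metric resolves this, since the fixed locus $\Fix(\sigma) = \partial M$ together with the normal directions it contains is exactly what separates the two halves and forces the fibers over $\partial N$ to lie in the boundary. The only other point requiring care is the existence of the compatible collar, which is a standard consequence of the collar neighborhood theorem and transversality of $N$ to $\partial M$.
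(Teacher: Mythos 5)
Your proof cannot be compared line-by-line with the paper's, because the paper gives none: Theorem \ref{neat submanifold has tubular neighborhood} is stated with a citation to Theorem 7.2.12 of \cite{mukherjee-2015}, so any complete argument is ``a different route''. Yours — double along the boundary, average the metric over $\{1,\sigma\}$, take the normal exponential tube of $DN$, and cut back to $M$ — is a well-known and correct alternative: it buys a clean reduction to the classical boundaryless tubular neighborhood theorem, with the compatibility of the tube with $\partial M$ coming for free from equivariance, at the cost of the setup (smooth structure on $DM$, a collar isotoped so that $N$ meets it in $\partial N\times[0,1)$, invariant metric). Three points should be tightened. First, the neatness you use must include transversality to $\partial M$ (equivalently, local pairs modelled on $\RR^m_+\subset\RR^n_+$); the paper's Definition \ref{neat suborbifold} records only the set-theoretic condition $\partial\SSS=\SSS\cap\partial\OO$, and with that condition alone the theorem is false: the arc $\{(t,t^2)\mid t\geq 0\}$ in the closed half-plane meets the boundary exactly in its boundary point, yet admits no tubular neighborhood restricting over $\partial N$ to a tube in $\partial M$, since such a neighborhood would straighten it into an arc transverse to the boundary. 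So your parenthetical ``in particular transverse to $\partial M$'' is doing real work and should be an explicit hypothesis (it is part of the definition in \cite{mukherjee-2015}, and it is what makes your compatible collar exist). Second, $DM$ is closed only when $M$ is compact, which the theorem does not assume; in general replace the constant $\varepsilon$ by a $\sigma$-invariant smooth function $\varepsilon\colon DN\to(0,\infty)$ (average it as well) and nothing else changes. Third, ``the normal geodesics remain in the corresponding copy of $M$ for small $t$'' is weaker than what you need, since $\varepsilon$ is uniform while ``small $t$'' depends on the point; the correct global statement follows from equivariance plus injectivity of the tube map $\Phi$ on the $\varepsilon$-disk bundle: $\sigma$ being an isometry preserving $DN$ gives $\Phi\circ d\sigma=\sigma\circ\Phi$ on $\nu(DN)$, so if $\Phi(v)\in\partial M=\Fix(\sigma)$ then $\Phi(d\sigma(v))=\Phi(v)$, hence $d\sigma(v)=v$ and the base point of $v$ lies in $DN\cap\partial M=\partial N$. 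Thus fibers over interior points of $N$ avoid $\partial M$ entirely for all $t<\varepsilon$, and $T\cap M$ is exactly the image of the disk bundle over $N$ — which, together with your observation that $\nu_p(DN)\subset T_p(\partial M)$ for $p\in\partial N$ (so that this bundle maps isomorphically onto $\nu(N\subset M)$), completes the proof.
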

A proof can be found in Theorem $7.2.12$ of \cite{mukherjee-2015}.

\section{Euler characteristic in odd-dimensional orbifolds}

Henceforth all orbifolds will be smooth. We want to prove the following theorem.

\begin{teor}\label{big theorem}
        Let $\OO$ be a compact odd-dimensional orbifold, then:
        $$\chi(\OO) = \frac{1}{2}\chi(\partial \OO)$$
\end{teor}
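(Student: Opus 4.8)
The plan is to prove the formula directly, by induction on the number of singular strata of $\OO$, taking the classical manifold identity as the base case. Recall that for a compact odd-dimensional manifold $M$ one has $\chi(M)=\tfrac12\chi(\partial M)$: this follows by doubling $M$ along $\partial M$, applying inclusion--exclusion (Proposition \ref{characteristic partition}) to write $\chi(DM)=2\chi(M)-\chi(\partial M)$, and using that a closed odd-dimensional manifold has vanishing Euler characteristic. When every local group is trivial $\OO$ is such an $M$, so the statement with zero singular strata is exactly this base case. For the inductive step I would stratify the singular locus $\Sigma_{\OO}$ by local-group type; by Proposition \ref{every smooth orbifold is locally homeomorphic to R^n/Gamma} the isotropy representation is linear in suitable charts, so each stratum is a manifold on which the local group is constant. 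I then choose a minimal (deepest) stratum $S$, which in the interior case is a compact neat suborbifold that is in fact a closed manifold, with constant local group $\Gamma$ of order $g$ and normal fibre $\RR^{m}/\Gamma$, where $m=\dim\OO-\dim S$.

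By the orbifold version of Theorem \ref{neat submanifold has tubular neighborhood}, $S$ admits a closed tubular neighborhood $\overline{N}$, realised as the total space of a bundle over $S$ with fibre the disc quotient $D^{m}/\Gamma$; its inner boundary $\partial_{0}N$ is the bundle over $S$ with fibre the link $L=S^{m-1}/\Gamma$. Since $D^{m}\to D^{m}/\Gamma$ and $S^{m-1}\to S^{m-1}/\Gamma$ are $g$-sheeted orbifold coverings, Proposition \ref{k-sheeted covering} gives $\chi(D^{m}/\Gamma)=\tfrac1g$ and $\chi(L)=\tfrac1g\chi(S^{m-1})$. Using multiplicativity of the orbifold Euler characteristic for such bundles (which I would deduce from the product formula, cf.\ Remark \ref{Product of good orbifolds is a good orbifold}, together with additivity over a trivializing cover via Proposition \ref{characteristic partition}), I obtain $\chi(\overline{N})=\tfrac1g\chi(S)$ and $\chi(\partial_{0}N)=\tfrac1g\chi(S)\chi(S^{m-1})$.

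Writing $\OO=\OO'\cup\overline{N}$ with $\OO'=\overline{\OO\setminus N}$ and $\OO'\cap\overline{N}=\partial_{0}N$, inclusion--exclusion gives $\chi(\OO)=\chi(\OO')+\chi(\overline{N})-\chi(\partial_{0}N)$, while for an interior stratum $\partial\OO'=\partial\OO\sqcup\partial_{0}N$. The orbifold $\OO'$ has strictly fewer singular strata, so by the inductive hypothesis $\chi(\OO')=\tfrac12\chi(\partial\OO')=\tfrac12\chi(\partial\OO)+\tfrac12\chi(\partial_{0}N)$. Substituting, the claim reduces to the single identity $\chi(\overline{N})-\tfrac12\chi(\partial_{0}N)=0$, that is, $\tfrac{\chi(S)}{g}\bigl(1-\tfrac12\chi(S^{m-1})\bigr)=0$. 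This quantity vanishes in both parities: if $\dim S$ is odd then $\chi(S)=0$ since $S$ is a closed odd-dimensional manifold, whereas if $\dim S$ is even then $m=\dim\OO-\dim S$ is odd, so $S^{m-1}$ is even-dimensional and $\chi(S^{m-1})=2$. Either way the correction term cancels, $\chi(\OO)=\tfrac12\chi(\partial\OO)$, and the induction closes.

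The hard part will be the boundary bookkeeping when the deepest stratum $S$ is not interior but meets $\partial\OO$ neatly: then $\overline{N}$ acquires corners, $\partial_{0}N$ has its own boundary inside $\partial\OO$, and the recombination must be carried out simultaneously for $\OO$ and for $\partial\OO$, with the same parity cancellation applied to the link bundle restricted to $S\cap\partial\OO$ (here $S$ is only a manifold \emph{with} boundary, so one must invoke the already-established formula for $S$ rather than $\chi(S)=0$). Making this precise requires the neat tubular neighborhood theorem for suborbifolds and compatibility of the induced triangulations; a secondary technical point is the multiplicativity of $\chi$ for the generally non-trivial normal bundles $\overline{N}\to S$ and $\partial_{0}N\to S$. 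Finally, I would record the alternative route of Satake: endow $\OO$ with a Riemannian metric (Proposition \ref{smooth orbifold admits Riemmaniann metric}), double it to a closed odd-dimensional orbifold, and invoke the orbifold Chern--Gauss--Bonnet theorem, whose integrand is the Pfaffian of the curvature and hence vanishes identically in odd dimension; this forces the Euler characteristic of the double to be $0$, and the formula follows exactly as in the base case.
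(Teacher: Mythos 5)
Your route is genuinely different from the paper's, and the interior case of your inductive step is essentially sound, but as written the argument has a real gap: the case where the minimal stratum $S$ meets $\partial \OO$, which you explicitly defer, is not optional and the induction does not close without it. (Consider $\quot{D^{2}}{\ZZ_{2}} \times [0,1]$ with $\ZZ_{2}$ acting by rotation: the unique singular stratum is the arc $\{0\}\times[0,1]$, which meets the boundary in two points, so the very first step of your induction is already the corner case.) There $S$ is a compact manifold with nonempty boundary $\partial S = S \cap \partial \OO$, so neither $\chi(S)=0$ nor the clean identity $\chi(\overline{N})=\tfrac12\chi(\partial_{0}N)$ is available; $\overline{N}$ has corners, and the inclusion--exclusion computations for $\chi(\partial\OO)$, $\chi(\partial\OO')$ and $\chi(\partial\overline{N})$ all acquire a cross-term supported on the link bundle over $\partial S$, a closed $(n-2)$-dimensional orbifold. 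The good news is that your own parity trick closes this: that cross-term equals $\tfrac{1}{g}\chi(\partial S)\,\chi(S^{m-1})$ with $\dim \partial S + \dim S^{m-1} = n-2$ odd, so one of the two factors is the Euler characteristic of a closed odd-dimensional manifold and the product vanishes; combined with $\chi(S)=\tfrac12\chi(\partial S)$ (your manifold base case applied to $S$ itself), the correction still cancels. But this must actually be carried out, and you also need to justify two inputs you lean on: multiplicativity of the orbifold Euler characteristic for the generally non-trivial $D^{m}/\Gamma$- and $S^{m-1}/\Gamma$-bundles over $S$, and the monotonicity of your induction variable --- excising a neighborhood of $S$ can disconnect an adjacent stratum, so ``number of singular strata'' is not obviously decreasing; inducting on the number of isomorphism types of local groups, or on the dimension, is safer.

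For comparison, the paper sidesteps both difficulties by inducting on the dimension $n$ in steps of two rather than on the number of strata: the tubular neighborhood $\OO_{1}=\overline{N}$ is handled wholesale as a very good orbifold via the $k$-sheeted covering property (Proposition \ref{k-sheeted covering}) and the manifold case, with no bundle multiplicativity required, and every cross-term $\chi\bigl(\partial(\OO_{1}\cap\OO_{2})\bigr)=\chi(\OO_{1}\cap\OO_{2}\cap\partial\OO)$ is killed at once because it is the Euler characteristic of a closed $(n-2)$-orbifold to which the inductive hypothesis applies (Lemmas \ref{propo para lema importante} and \ref{lema importante}). Your link-bundle parity computation is an attractive replacement for that dimensional induction in the interior case, but the boundary case is precisely where the paper's $(n-2)$-dimensional hypothesis does work that your proposal has not yet replicated.
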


\begin{rem}
    Note that last theorem cannot exists for even-dimensional orbifolds since we would have that for an orbifold $\OO$ of any dimension 

    $$\chi(\OO) = \frac{1}{2}\chi(\partial \OO) = \frac{1}{4}\chi(\partial \partial \OO) = \frac{1}{4} \chi(\emptyset) = 0$$

    which is false due to the fact that there exists orbifolds with non-zero Euler characteristic, such as $[0,1]$.
\end{rem}

First, we give some contexts where Theorem \ref{big theorem} holds.

\begin{rem}[Theorem holds in dimension $1$]\label{big theorem holds for dimension 1}
    By Remark \ref{classification of compact 1-orbifolds} we only have four types of compact $1$-orbifolds: $[0,1]$, $S^{1}$, the interval with one mirror point ($M_{1}$) and the interval with two mirror points ($M_{2}$). Appliying formula \eqref{defi euler characteristic for orbifolds} we get that:

        \begin{align*}
\chi([0,1]) &= 2 - 1 = 1      &  \chi(\partial [0,1]) & = \chi(\{0,1\}) = 1 + 1 = 2           \\
\chi(S^{1}) &= 1 - 1 = 0         &  \chi(\partial S^{1})& = \chi(\emptyset) = 0   \\
\chi(M_{1})  &= 1 - 1 + \frac{1}{2} = \frac{1}{2}        &  \chi(\partial M_{1})& = \chi(\{0\}) = 1  \\
\chi(M_{2})&= \frac{1}{2} - 1 + \frac{1}{2} = 0   &  \chi(\partial M_{2})&= \chi(\emptyset) = 0    
\end{align*}

        So we conclude that Theorem \ref{big theorem} holds for dimension $1$.
\end{rem}

In order to prove Theorem \ref{big theorem} we first treat the manifold case. Recall that for a compact $n$-manifold $M$ and $F$ an arbitrary field the Euler characteristic can be defined as:

$$\chi(M) := \sum_{i=0}^{n}(-1)^{i}dim(H_{i}(M,F))$$

It can be proven that the definition of $\chi(M)$ is independent of the field $F$, therefore the above is well defined. The next result (Theorem $3.6$ of \cite{hirsch-1976}) expresses an interesting connection about the behaviour of the Euler characteristic of a manifold and its boundary.

\begin{teor}\label{formula to prove for Manifolds and boundary 0}
    Let $M$ be an odd-dimensional compact smooth manifold without boundary. Then:

$$\chi(M)= 0$$
\end{teor}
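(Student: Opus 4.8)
The plan is to prove that a closed odd-dimensional manifold $M^{2k+1}$ has $\chi(M) = 0$ by exploiting Poincar\'e duality over an arbitrary field $F$. Since $\chi(M)$ is independent of the coefficient field, I may work over $\QQ$ (or any field), where the machinery is cleanest. The key input is that a compact manifold without boundary is a closed orientable-or-not manifold admitting Poincar\'e duality: if $M$ is orientable this is the usual duality $H_i(M;F) \cong H_{n-i}(M;F)$, and if $M$ is non-orientable one uses duality with twisted coefficients, or more simply passes to $\ZZ_2$ coefficients where every closed manifold is $\ZZ_2$-orientable and satisfies $H_i(M;\ZZ_2) \cong H_{n-i}(M;\ZZ_2)$. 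Because the Euler characteristic does not depend on $F$, computing it over $\ZZ_2$ suffices and removes the orientability hypothesis entirely.

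The main computation is then a short alternating-sum manipulation. Write $b_i = \dim_F H_i(M;F)$ and $n = 2k+1$. Poincar\'e duality gives $b_i = b_{n-i}$ for all $i$. The plan is to pair off the term indexed by $i$ with the term indexed by $n-i$ in the alternating sum
\[
\chi(M) = \sum_{i=0}^{n} (-1)^i b_i.
\]
First I would note that because $n$ is odd, the map $i \mapsto n-i$ is a fixed-point-free involution on $\{0,1,\dots,n\}$ (there is no $i$ with $i = n-i$, as that would force $n = 2i$ to be even). Hence the index set splits into disjoint pairs $\{i, n-i\}$. Within each such pair the contributions are $(-1)^i b_i + (-1)^{n-i} b_{n-i}$, and since $b_i = b_{n-i}$ and $(-1)^{n-i} = (-1)^{n}(-1)^{-i} = -(-1)^i$ (using that $n$ is odd), these two contributions cancel exactly. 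Summing over all the pairs yields $\chi(M) = 0$.

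I expect the only genuine subtlety to be the orientability issue, which is precisely why invoking field-independence of $\chi$ and working over $\ZZ_2$ is the cleanest route: it lets me apply Poincar\'e duality uniformly without separate cases. An equally valid alternative, if one prefers to avoid coefficient-juggling, is the standard observation that $\chi(M) = \frac{1}{2}\chi(\partial(M \times [0,1]))$ together with the product and boundary formulas, but that risks circularity with the very statement being built toward, so I would avoid it here. A third classical argument uses a nowhere-zero vector field: every closed odd-dimensional manifold admits one (for instance via the vanishing of $\chi$ forced by the index sum of a generic field, or by an explicit construction), and the Poincar\'e--Hopf theorem then gives $\chi(M) = 0$; however, this is logically heavier than the duality argument and I regard the pairing computation above as the most direct and self-contained proof.
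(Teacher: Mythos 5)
Your argument is correct, but note that the paper does not actually prove this statement: it imports it wholesale as Theorem $3.6$ of Hirsch's \emph{Differential Topology}, so there is no in-paper proof to compare against. Your Poincar\'e-duality route is the standard self-contained one and all the steps check out: the field-independence of $\chi$ (which the paper itself asserts just before the theorem, and which follows from computing $\chi$ as the alternating sum of the numbers of cells of a finite CW structure) lets you work over $\ZZ_2$, where every closed manifold is orientable and duality gives $b_i = b_{n-i}$ without any case distinction; and since $n$ is odd, $i \mapsto n-i$ is a fixed-point-free involution on $\{0,\dots,n\}$ with $(-1)^{n-i} = -(-1)^i$, so the alternating sum cancels in pairs. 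Your instinct to avoid the $\chi(M) = \tfrac{1}{2}\chi(\partial(M\times[0,1]))$ shortcut is also sound: the paper derives exactly that kind of doubling identity (Theorem \ref{formula to prove for Manifolds} via Remark \ref{formula works}) \emph{from} the present statement, so using it here would indeed be circular. The only cosmetic slip is the intermediate expression $(-1)^{n}(-1)^{-i}$, where $(-1)^{-i}$ should be read as $(-1)^{i}$; the conclusion is unaffected.
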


From this fact and in the exact same way as proved in Remark \ref{formula works} (which will be seen) we have the following result.

\begin{teor}\label{formula to prove for Manifolds}
    Let $M$ be an odd-dimensional compact smooth manifolds with boundary the following formula holds:

    $$
    \chi(M)= \frac{1}{2}\chi(\partial M)
    $$
\end{teor}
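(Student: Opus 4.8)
The plan is to deduce Theorem \ref{formula to prove for Manifolds} from the boundaryless case, Theorem \ref{formula to prove for Manifolds and boundary 0}, by a standard doubling argument. Given a compact odd-dimensional manifold $M$ with boundary, I would form the double $DM := M \cup_{\partial M} M$, obtained by gluing two copies of $M$ along their common boundary $\partial M$ via the identity map. The double $DM$ is a compact smooth manifold \emph{without} boundary, and it is still odd-dimensional, so Theorem \ref{formula to prove for Manifolds and boundary 0} applies directly to give $\chi(DM) = 0$.

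The next step is to relate $\chi(DM)$ to $\chi(M)$ and $\chi(\partial M)$ via the inclusion-exclusion formula of Proposition \ref{characteristic partition}. Writing $DM = M_1 \cup M_2$, where $M_1$ and $M_2$ are the two copies of $M$ and $M_1 \cap M_2 = \partial M$, I would apply Proposition \ref{characteristic partition} to obtain
\[
\chi(DM) = \chi(M_1) + \chi(M_2) - \chi(M_1 \cap M_2) = 2\chi(M) - \chi(\partial M),
\]
using that $\chi(M_1) = \chi(M_2) = \chi(M)$ since each copy is homeomorphic to $M$, and that $M_1 \cap M_2 = \partial M$. Combining this with $\chi(DM) = 0$ yields $2\chi(M) = \chi(\partial M)$, which rearranges to the desired formula $\chi(M) = \tfrac{1}{2}\chi(\partial M)$.

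The main obstacle, such as it is, lies in the two technical points underlying the inclusion-exclusion step. First, one must check that $DM$ is genuinely a smooth closed manifold, i.e. that the smooth structures on the two copies glue together smoothly across $\partial M$; this is standard and follows from the collar neighborhood theorem for manifolds with boundary. Second, one must ensure that $M_1$, $M_2$, and $\partial M$ are compact subspaces of $DM$ admitting compatible triangulations, so that Proposition \ref{characteristic partition} legitimately applies — this is immediate since $M$ is compact smooth and hence triangulable, and the gluing can be performed so that the triangulations of the two copies agree on $\partial M$.

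\textbf{Remark.} This is precisely the argument that Remark \ref{formula works}, referenced in the text, will later carry out in the orbifold setting; the manifold case is the special case where all local groups are trivial. The same doubling-and-inclusion-exclusion scheme then upgrades to orbifolds once one verifies that the double of an odd-dimensional orbifold with boundary is again a closed odd-dimensional orbifold with vanishing Euler characteristic.
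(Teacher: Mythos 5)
Your proof is correct and follows exactly the paper's own route: the paper derives Theorem \ref{formula to prove for Manifolds} from Theorem \ref{formula to prove for Manifolds and boundary 0} by the same doubling argument that it spells out in Proposition \ref{formula works}, namely $0 = \chi(2M) = 2\chi(M) - \chi(\partial M)$ via the inclusion-exclusion formula of Proposition \ref{characteristic partition}. Your added remarks on the smoothness of the double and the compatibility of triangulations are reasonable technical points that the paper leaves implicit.
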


In this section we will provide the necessary tools to give a topological proof of the extension of the formula to compact odd-dimensional smooth orbifolds with boundary.

\begin{prop}
    For very good orbifolds Theorem \ref{big theorem} is a consequence of Theorem \ref{formula to prove for Manifolds}.
\end{prop}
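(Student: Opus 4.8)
The plan is to exploit the defining property of a very good orbifold: there exists a finite group $\Gamma$ and a manifold $M$ with a properly discontinuous (indeed finite) action of $\Gamma$ such that $\OO = M/\Gamma$. First I would record that since $\OO$ is an odd-dimensional orbifold, the covering manifold $M$ is itself odd-dimensional and compact (compactness of $M$ follows because the covering $\rho \colon M \to \OO$ is finite-sheeted and $\OO$ is compact). Moreover the covering map restricts to a covering of boundaries, so $\partial M$ is a closed manifold that covers $\partial \OO$ with the same number of sheets. The key quantitative input is Proposition \ref{k-sheeted covering}: if the covering $(M,\rho)$ is $k$-sheeted, then $\chi(M) = k\,\chi(\OO)$, and likewise $\chi(\partial M) = k\,\chi(\partial \OO)$ for the induced covering on the boundary.

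With these two relations in hand, the proof reduces to the manifold case. I would apply Theorem \ref{formula to prove for Manifolds} to the compact odd-dimensional smooth manifold $M$, obtaining
\[
\chi(M) = \tfrac{1}{2}\chi(\partial M).
\]
Substituting $\chi(M) = k\,\chi(\OO)$ and $\chi(\partial M) = k\,\chi(\partial \OO)$ gives
\[
k\,\chi(\OO) = \tfrac{1}{2}\,k\,\chi(\partial \OO),
\]
and since $k \geq 1$ is a nonzero integer we may cancel it to conclude $\chi(\OO) = \tfrac{1}{2}\chi(\partial \OO)$, which is exactly the statement of Theorem \ref{big theorem} for very good orbifolds.

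The main obstacle, and the step I would treat most carefully, is verifying that the global quotient covering $M \to \OO$ genuinely restricts to a well-behaved covering $\partial M \to \partial \OO$ with the \emph{same} number of sheets $k$. One must check that the $\Gamma$-action carries $\partial M$ to itself (so that $\partial M$ is $\Gamma$-invariant and $\partial \OO = \partial M / \Gamma$), and that a regular point of $\partial \OO$ has preimage of the same cardinality as a regular point of the interior; this is where one uses that the sheet count is computed over regular points, together with the fact that the boundary of a very good orbifold is again very good with the restricted action. A secondary point worth stating is that $M$ being odd-dimensional is essential, since Theorem \ref{formula to prove for Manifolds} fails in even dimensions; this is automatic here because $\dim M = \dim \OO$ is odd by hypothesis.
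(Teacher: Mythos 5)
Your proposal is correct and follows essentially the same route as the paper: pass to the $k$-sheeted manifold cover $M \to \OO$, apply Proposition \ref{k-sheeted covering} to both $M$ and $\partial M$, invoke Theorem \ref{formula to prove for Manifolds} for $M$, and cancel $k$. The extra care you take in justifying that the covering restricts to a $k$-sheeted covering $\partial M \to \partial \OO$ is a point the paper asserts without proof, so your version is, if anything, slightly more complete.
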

\begin{proof}
    Let $\OO$ be a very good orbifold with $M$ as the finite covering manifold and suppose that the covering $\rho : M \longrightarrow \OO$ is $k$-sheeted with $k > 0$. Then its restriction to the boundary $ \rho_{|\partial M} : \partial M \longrightarrow \partial \OO$ is a $k$-sheeted covering of $\partial \OO$ by $\partial M$. Therefore, by Proposition \ref{k-sheeted covering}, we have that $\chi(M) = k \chi(\OO)$ and $\chi(\partial M) = k \chi(\partial\OO)$. Furthermore by Theorem \ref{formula to prove for Manifolds} we have that 
$$\chi(M)= \frac{1}{2}\chi(\partial M) $$ 
so we conclude that 
$$k\chi(\OO)= \frac{1}{2}\left(k\chi(\partial \OO)\right) $$ 
which is equivalent to
$$  \chi(\OO)= \frac{1}{2}\chi(\partial \OO)$$
\end{proof}

We will use a natural stratification of the singular locus of an orbifold to break the proof into easier ones using the following relation.

Ichiro Satake stated and proved in $1957$ the following theorem (Theorem $4$ of \cite{satake-1957}).

\begin{teor}\label{formula to prove for Orbifolds and boundary 0}
    Let $\OO$ be an odd-dimensional compact smooth orbifold without boundary. Then:

$$\chi(\OO)= 0$$
\end{teor}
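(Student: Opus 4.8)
The plan is to follow Satake's own route and deduce the vanishing from an orbifold version of the Chern--Gauss--Bonnet theorem, so that the argument is the exact orbifold analogue of the manifold statement in Theorem \ref{formula to prove for Manifolds and boundary 0}. The slogan is that $\chi(\OO)$ is computed by an integral of a local curvature quantity, and that this quantity is forced to vanish when $\dim \OO$ is odd.

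First I would equip $\OO$ with a Riemannian metric, which is possible by Proposition \ref{smooth orbifold admits Riemmaniann metric}. Working chart by chart in a fundamental atlas $\{(\widetilde{U}_i, \Gamma_i, \varphi_i)\}$, the metric lifts to a $\Gamma_i$-invariant Riemannian metric on each $\widetilde{U}_i \subseteq \RR^{n}$, whose Levi--Civita connection produces a $\Gamma_i$-invariant curvature $2$-form $\Omega_i$. From $\Omega_i$ one builds the Gauss--Bonnet--Chern integrand, the suitably normalized Pfaffian of $\Omega_i$, which is again $\Gamma_i$-invariant and, by naturality of the Pfaffian construction, agrees on overlaps under the equivariant gluing of Definition \ref{defi smooth orbifold}. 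This patches to a global Gauss--Bonnet form $e(\OO)$ on $\OO$ that can be integrated against the orbifold measure, under which each chart contributes its integral divided by $|\Gamma_i|$, since the quotient map $\widetilde{U}_i \to \widetilde{U}_i/\Gamma_i$ is generically $|\Gamma_i|$-to-one.

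Then I would invoke Satake's Gauss--Bonnet theorem for $V$-manifolds, namely $\chi(\OO) = \int_\OO e(\OO)$, the right-hand integral being taken with the $1/|\Gamma_i|$ weighting just described; the point of that weighting is precisely that it makes the analytic right-hand side reproduce the fraction-weighted Euler characteristic of Definition \ref{defi euler characteristic for orbifolds} rather than the ordinary Euler characteristic of $|\OO|$. Finally, since $\dim \OO = n$ is odd, the Pfaffian defining the integrand is the Pfaffian of an antisymmetric form of odd size and hence is identically zero; equivalently, the Euler form has even degree and so cannot be of top degree on an odd-dimensional space. In either reading $e(\OO) \equiv 0$, so the integral vanishes and $\chi(\OO) = 0$.

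The main obstacle is not the final vanishing, which is immediate once the machinery is available, but the setup: one must check that the locally defined $\Gamma_i$-invariant curvature forms patch to a genuine global object on $\OO$, that orientability issues are handled (passing to an orientation double cover if needed and using Proposition \ref{k-sheeted covering}), and ---crucially--- that the $1/|\Gamma_i|$ weights are attached correctly so that $\int_\OO e(\OO)$ computes $\chi(\OO)$ in the sense of Definition \ref{defi euler characteristic for orbifolds}. A technically lighter alternative that avoids Gauss--Bonnet entirely is an orbifold Poincaré--Hopf argument: choosing a vector field on $\OO$ with isolated zeros and comparing $X$ with $-X$, the local index at each zero is multiplied by $(-1)^{n}$, so the weighted index sum satisfies $\chi(\OO) = (-1)^{n}\chi(\OO)$, which forces $\chi(\OO) = 0$ for odd $n$ without any orientability or degree-of-form bookkeeping.
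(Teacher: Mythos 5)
The paper does not actually prove this statement: it quotes it as Theorem 4 of Satake's 1957 paper and notes, just after Proposition \ref{formula works}, that Satake's proof rests on the orbifold Chern--Gauss--Bonnet formula. Your proposal therefore follows exactly the route the paper points to, and the remainder of the paper is devoted to replacing this analytic input by a topological argument.

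That said, the way you close the argument has a genuine soft spot. Satake's Gauss--Bonnet theorem (his Theorem 2) is a statement about \emph{even}-dimensional closed $V$-manifolds: the integrand is the Pfaffian of the curvature, and the Pfaffian is only defined for antisymmetric matrices of even size --- it is not ``identically zero'' in odd size, it simply does not exist. So you cannot invoke $\chi(\OO)=\int_{\OO}e(\OO)$ on an odd-dimensional orbifold and then observe that the integrand vanishes: in odd dimensions that identity, with the convention $e(\OO)=0$, \emph{is} Satake's Theorem 4, not a corollary of his Theorem 2. The odd-dimensional vanishing has to be extracted from the even-dimensional machinery by an additional argument (Satake passes through the Gauss--Bonnet formula with boundary), and your sketch skips precisely that step. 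Your Poincar\'e--Hopf alternative is the right way to repair this: granting an orbifold Poincar\'e--Hopf theorem that identifies $\chi(\OO)$ with the $\frac{1}{|\Gamma|}$-weighted sum of indices of a suitable vector field (one compatible with the stratification, so that its zeros are isolated and admit $\Gamma$-invariant local lifts), comparing $X$ with $-X$ gives $\chi(\OO)=(-1)^{n}\chi(\OO)=0$ with no orientability or degree-of-form bookkeeping. But that theorem is nontrivial input of the same order as Gauss--Bonnet, which is exactly why the paper develops the purely topological proof in the sections that follow.
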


\begin{prop}\label{formula works}
    As a consequence of Theorem \ref{formula to prove for Orbifolds and boundary 0} we have that Theorem \ref{big theorem} holds.
\end{prop}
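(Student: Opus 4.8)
The plan is to carry out the classical doubling argument, with Satake's vanishing result (Theorem \ref{formula to prove for Orbifolds and boundary 0}) playing the role that the homological computation plays in the manifold case. Concretely, I would deduce the boundary formula from the no-boundary case by building a closed orbifold out of two copies of $\OO$.

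First I would form the \emph{double} $D\OO$ of $\OO$ by taking two copies of $\OO$ and gluing them along the identity of $\partial\OO$. As already indicated in Section 2, near a boundary point $\OO$ is modelled on $\RR_{+}^{n}/\Gamma$ with $\Gamma$ preserving the bounding hyperplane $\RR^{n-1}\times\{0\}$, so reflecting this model across the hyperplane produces a chart modelled on $\RR^{n}/\Gamma$; patching the reflected charts to the original ones equips $D\OO$ with a smooth orbifold structure. Since $\OO$ is compact and odd-dimensional, $D\OO$ is again compact and odd-dimensional, and by construction $\partial(D\OO)=\emptyset$.

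Next I would apply Theorem \ref{formula to prove for Orbifolds and boundary 0} to $D\OO$, obtaining $\chi(D\OO)=0$, and then compute $\chi(D\OO)$ by inclusion--exclusion. Writing $X$ and $Y$ for the two embedded copies of $\OO$ inside $D\OO$, one has $X\cup Y=D\OO$ and $X\cap Y=\partial\OO$. Choosing a compatible triangulation of $D\OO$ that restricts to compatible triangulations of $X$, $Y$ and $\partial\OO$, Proposition \ref{characteristic partition} gives
\[
\chi(D\OO)=\chi(X)+\chi(Y)-\chi(X\cap Y)=2\chi(\OO)-\chi(\partial\OO).
\]
Combining this with $\chi(D\OO)=0$ yields $2\chi(\OO)=\chi(\partial\OO)$, which is precisely the assertion of Theorem \ref{big theorem}.

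The step I expect to be the main obstacle is the first one: verifying that $D\OO$ is genuinely a smooth orbifold without boundary, i.e. that the reflected charts are mutually compatible and that the local groups along $\partial\OO$ act consistently from both sides. This is the orbifold analogue of the collar-neighbourhood theorem, and the delicate case is a boundary point that is simultaneously singular, where one must check that reflection in the bounding hyperplane commutes with the $\Gamma$-action and that the glued transition maps remain equivariant diffeomorphisms. Once the doubled orbifold has been set up correctly, the remaining steps are the formal application of Satake's theorem together with Proposition \ref{characteristic partition}, and the conclusion is immediate.
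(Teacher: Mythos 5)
Your proposal is correct and follows essentially the same route as the paper: doubling $\OO$ along its boundary, applying Satake's vanishing theorem to the resulting closed orbifold, and using the inclusion--exclusion formula of Proposition \ref{characteristic partition} to obtain $0 = 2\chi(\OO) - \chi(\partial\OO)$. Your additional remarks on verifying the smooth orbifold structure of the double are a point the paper's proof passes over silently, but the argument is the same.
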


\begin{proof}
    Let $\OO$ be a compact odd-dimensional orbifold with boundary. Then by doubling the orbifold along the boundary we obtain another orbifold without boundary that we will call $2\OO$. Let $\OO_{1}$ be the original half of $2\OO$ and $\OO_{2}$ be the other half. Then it is clear that $2\OO = \OO_{1} \cup \OO_{2}$. We also have that $\OO_{1} \cap \OO_{2} = \partial \OO$, $\OO_{1} \cong \OO \cong \OO_{2}$. Therefore we have that:
$$0 = \chi(2\OO) = 2\chi(\OO) - \chi(\partial \OO)$$ 
which is equivalent to
$$ \chi(\OO) = \frac{1}{2} \chi(\partial \OO)$$
\end{proof}

Nevertheless, the proof of Theorem \ref{formula to prove for Orbifolds and boundary 0}, which was used to prove the generalisation, is based on the Chern-Gauss-Bonnet formula for orbifolds developed by Satake (Theorem $2$ of \cite{satake-1957}), therefore, a topological proof will be constructed in the following pages.
The task to ease the proof using formula \eqref{characteristic partition} will be done by breaking the orbifold into smaller parts where we know that the formula holds, specifically, we will break the orbifold into good suborbifolds. To formalize these ideas we introduce the following definitions.

\begin{defi}[Orbifold decomposition]
    Given a $n$-orbifold $\OO$ we say that a pair of closed $n$-orbifolds $\{\OO_{1}, \OO_{2}\}$ is a decomposition of $\OO$ if 
    \begin{enumerate}
        \item $\OO =\OO_{1} \cup \OO_{2}$
        \item $\OO_{1} \nsubseteq \OO_{2}$ and $\OO_{2} \nsubseteq \OO_{1}$
    \end{enumerate}
     We will say that a decomposition $\{\OO_{i}\}_{i \in A}$ is \emph{neat} if $\OO_{1} \cap \OO_{2}$ is a neat suborbifold of $\OO$ and $\OO_{1} \cap \OO_{2} \subseteq \partial \OO_{1}, \partial \OO_{2}$.
\end{defi}

\begin{rem}\label{neat decomposition iff O1 cap O2 = partial O1 cap partial O2}
    Note that the condition $\OO_{1} \cap \OO_{2} \subseteq \partial \OO_{1}, \partial \OO_{2}$ is equivalent to the condition $\OO_{1} \cap \OO_{2} = \partial \OO_{1} \cap \partial \OO_{2}$. Also note that disjoint decompositions are neat decompositions.
\end{rem}

Neat decompositions of a given orbifold $\OO$ allow to give useful decompositions of its suborbifolds, as the next proposition shows:

\begin{prop}[Boundary decomposition]\label{Boundary decomposition}
    Given a $n$-orbifold $\OO$ with neat decomposition $\{\OO_{1},\OO_{2}\}$ the following equalities hold:

    \begin{enumerate}
        \item $\partial \OO_{i} = (\OO_{i} \cap \partial \OO) \cup (\OO_{1} \cap \OO_{2})$ for $i=1,2$.
        \item $\partial \OO = (\partial \OO \cap \OO_{1}) \cup (\partial \OO \cap \OO_{2})$.
        \item $\partial(\OO_{1} \cap \OO_{2}) = \OO_{1} \cap \OO_{2} \cap \partial \OO$.
    \end{enumerate}
\end{prop}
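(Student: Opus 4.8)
The plan is to prove the three boundary equalities by working locally and exploiting the defining properties of a neat decomposition. The core tool is Remark \ref{neat decomposition iff O1 cap O2 = partial O1 cap partial O2}, which tells us that for a neat decomposition $\OO_1 \cap \OO_2 = \partial \OO_1 \cap \partial \OO_2$, together with the neatness of the intersection as a suborbifold. Since all three statements are about which points lie on a boundary, and boundary membership is a local property detectable in orbifold charts (a point is a boundary point precisely when it has a lift to the hyperplane $\RR^{n-1}\times\{0\}$), I would reduce each claim to a set-theoretic verification that can be checked chart-by-chart.

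For item (i), the inclusion $(\OO_i \cap \partial\OO) \cup (\OO_1 \cap \OO_2) \subseteq \partial\OO_i$ is the easier direction: points of $\OO_i \cap \partial\OO$ are boundary points of $\OO$ lying in $\OO_i$, hence boundary points of $\OO_i$, and $\OO_1 \cap \OO_2 \subseteq \partial\OO_i$ holds by the very definition of a neat decomposition. For the reverse inclusion, I would take a point $x \in \partial\OO_i$ and distinguish two cases according to whether $x$ is an interior point of $\OO$ or a boundary point of $\OO$. If $x \in \partial\OO$ then $x \in \OO_i \cap \partial\OO$ and we are done; if $x \in \Int(\OO)$, then a chart around $x$ in $\OO$ is the full $\RR^n$ model (modulo the group), so the boundary of $\OO_i$ at $x$ can only come from the interface between $\OO_1$ and $\OO_2$, forcing $x \in \OO_1 \cap \OO_2$. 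This case analysis, passing to a chart where $x$ lies in the interior of $|\OO|$, is where the argument has to be made carefully.

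Item (ii) is the most direct: since $\{\OO_1,\OO_2\}$ covers $\OO$, every point of $\partial\OO$ lies in $\OO_1$ or $\OO_2$, giving $\partial\OO = (\partial\OO \cap \OO_1) \cup (\partial\OO \cap \OO_2)$ immediately from $\partial\OO = \partial\OO \cap (\OO_1 \cup \OO_2)$ and distributivity of intersection over union. Item (iii) follows by combining the neatness hypothesis with Definition \ref{neat suborbifold}: since $\OO_1 \cap \OO_2$ is a neat suborbifold of $\OO$, its boundary is exactly $(\OO_1 \cap \OO_2) \cap \partial\OO$, which is the asserted equality. I would make sure to invoke the neatness of $\OO_1 \cap \OO_2$ as a suborbifold of $\OO$, not merely of one of the halves, since that is precisely what the neat-decomposition hypothesis supplies.

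The main obstacle I anticipate is the reverse inclusion in item (i): one must argue rigorously that a boundary point of $\OO_i$ that is interior to $\OO$ must lie on the common interface $\OO_1 \cap \OO_2$. The cleanest way to handle this is to fix a local chart of $\OO$ around such an $x$ modelled on $\RR^n/\Gamma_x$, observe that in this chart $\OO_i$ appears as a subset with boundary, and note that since the ambient model has no boundary, any boundary produced must come from the decomposition interface rather than from $\partial\OO$; invoking $\OO_1 \cap \OO_2 = \partial\OO_1 \cap \partial\OO_2$ then pins $x$ down. Once this local picture is established, items (ii) and (iii) are essentially bookkeeping consequences of the covering property and the definition of neatness.
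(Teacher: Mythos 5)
Your proposal is correct and follows essentially the same route as the paper: the easy inclusion in (i) from the definition of neat decomposition, a case analysis for the reverse inclusion showing that a boundary point of $\OO_{i}$ interior to $\OO$ must lie on the interface, distributivity for (ii), and Definition \ref{neat suborbifold} for (iii). The one step the paper makes more precise is exactly the one you flag as delicate: it concludes $x \in \OO_{2}$ by using that $\OO_{2}$ is closed (so a point $x \in \partial\OO_{1}$ with $x \notin \OO_{2}$ and $x \notin \partial\OO$ would have a full boundaryless chart contained in $\OO_{1} - \OO_{2}$, forcing $x \in \mathring{\OO_{1}}$, a contradiction), rather than by invoking $\OO_{1} \cap \OO_{2} = \partial\OO_{1} \cap \partial\OO_{2}$, which would require first knowing $x \in \partial\OO_{2}$.
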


    \begin{proof}
    We first prove item $i$. By assumption $\OO_{1} \cap \OO_{2} \subseteq \partial \OO_{i}$, also it is clear that $\partial \OO \cap \OO_{i} \subseteq \partial \OO_{i}$ so 
    $$(\partial \OO \cap \OO_{i}) \cup (\OO_{1}\cap \OO_{2}) \subseteq \partial \OO_{i}$$

    We prove the other inclusion. We prove it for $i=1$, case $i=2$ is completely analogous. 
    
    Take $x \in \partial \OO_{1}$. Assume that $x \in \OO_{2}$, then $x \in \OO_{1} \cap \OO_{2}$ and therefore $x \in (\OO_{1} \cap \partial \OO) \cup (\OO_{1} \cap \OO_{2})$.
    
    Now assume that $x \notin \OO_{2}$. Since $\OO_{2}$ is closed, $\OO - \OO_{2}$ is and open subset such that $x \in \OO_{1} - \OO_{2}$. To achieve a contradiction assume that $x \notin \partial \OO$. Then there exists a local chart around $x$ of the form 
    
    $$U_{x} \cong \quot{\RR^{n}}{\Gamma}$$

    such that $U_{x} \subseteq \OO_{1} - \OO_{2} \subseteq \OO_{1}$. Hence we have that $x \in \mathring{\OO_{1}}$, and because $\mathring{\OO_{1}} \cap \partial \OO_{1} = \emptyset$ we arrive to a contradiction with $x \in \partial \OO_{1}$. Hence we conclude that $x \in \partial \OO$ and therefore $x \in \OO_{1} \cap \partial \OO \subseteq (\OO_{1} \cap \partial \OO) \cup (\OO_{1} \cap \OO_{2})$ so
    $$\partial \OO_{1} \subseteq \OO_{1} \cap \partial \OO \subseteq (\OO_{1} \cap \partial \OO) \cup (\OO_{1} \cap \OO_{2})$$

    Item $ii$ is trivial since $\OO = \OO_{1} \cup \OO_{2}$ and hence: $$\partial \OO = \partial \OO \cap \OO = \partial \OO \cap (\OO_{1} \cup \OO_{2}) = (\partial \OO \cap \OO_{1}) \cup (\partial \OO \cap \OO_{2})$$

    Item $iii$ follows by Definition \ref{neat suborbifold}. 
\end{proof}

The next lemma gives a connection between the Euler characteristic of an orbifold and its boundary and the suborbifolds of the decomposition.

\begin{lema}\label{propo para lema importante}
    Let $\OO$ be a compact orbifold of any dimension with neat decomposition $\{\OO_{1},\OO_{2}\}$. If $\chi(\OO_{i}) = \frac{1}{2} \chi(\partial \OO_{i})$ for $i=1,2$ then: 
    $$\chi(\OO)=\frac{1}{2}\chi(\partial \OO) - \frac{1}{2}\chi \bigl(\partial(\OO_{1} \cap \OO_{2}) \bigr) $$
\end{lema}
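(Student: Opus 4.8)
The plan is to combine the inclusion--exclusion formula of Proposition \ref{characteristic partition} with the three set-theoretic identities of Proposition \ref{Boundary decomposition}, so that the argument becomes a purely algebraic bookkeeping of Euler characteristics; no further geometric input is needed. First I would apply Proposition \ref{characteristic partition} to the decomposition $\OO = \OO_{1} \cup \OO_{2}$, obtaining
$$\chi(\OO) = \chi(\OO_{1}) + \chi(\OO_{2}) - \chi(\OO_{1} \cap \OO_{2}),$$
and then substitute the hypothesis $\chi(\OO_{i}) = \tfrac{1}{2}\chi(\partial \OO_{i})$ to rewrite the right-hand side as $\tfrac{1}{2}\chi(\partial \OO_{1}) + \tfrac{1}{2}\chi(\partial \OO_{2}) - \chi(\OO_{1} \cap \OO_{2})$.

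Next I would expand each term $\chi(\partial \OO_{i})$ using item $i$ of Proposition \ref{Boundary decomposition}, namely $\partial \OO_{i} = (\OO_{i} \cap \partial \OO) \cup (\OO_{1} \cap \OO_{2})$. Applying inclusion--exclusion a second time and using that $(\OO_{i} \cap \partial \OO) \cap (\OO_{1} \cap \OO_{2}) = \OO_{1} \cap \OO_{2} \cap \partial \OO = \partial(\OO_{1} \cap \OO_{2})$, where the last equality is item $iii$, I obtain for each $i=1,2$
$$\chi(\partial \OO_{i}) = \chi(\OO_{i} \cap \partial \OO) + \chi(\OO_{1} \cap \OO_{2}) - \chi\bigl(\partial(\OO_{1} \cap \OO_{2})\bigr).$$
Feeding these two expressions back into the formula for $\chi(\OO)$, the two copies of $\tfrac{1}{2}\chi(\OO_{1} \cap \OO_{2})$ combine to cancel the subtracted $\chi(\OO_{1} \cap \OO_{2})$, leaving
$$\chi(\OO) = \tfrac{1}{2}\chi(\OO_{1} \cap \partial \OO) + \tfrac{1}{2}\chi(\OO_{2} \cap \partial \OO) - \chi\bigl(\partial(\OO_{1} \cap \OO_{2})\bigr).$$

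Finally I would compute $\chi(\partial \OO)$ independently via item $ii$, $\partial \OO = (\partial \OO \cap \OO_{1}) \cup (\partial \OO \cap \OO_{2})$, whose intersection term $(\partial \OO \cap \OO_{1}) \cap (\partial \OO \cap \OO_{2}) = \OO_{1} \cap \OO_{2} \cap \partial \OO$ is again $\partial(\OO_{1} \cap \OO_{2})$ by item $iii$. Inclusion--exclusion then yields $\chi(\partial \OO) = \chi(\OO_{1} \cap \partial \OO) + \chi(\OO_{2} \cap \partial \OO) - \chi(\partial(\OO_{1} \cap \OO_{2}))$. Subtracting $\tfrac{1}{2}$ of this identity from the displayed expression for $\chi(\OO)$ cancels the two boundary-slice terms and leaves exactly $\chi(\OO) = \tfrac{1}{2}\chi(\partial \OO) - \tfrac{1}{2}\chi(\partial(\OO_{1} \cap \OO_{2}))$, which is the claim.

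The only points requiring genuine care, and what I expect to be the main obstacle, are the intersection identities invoked at each stage: verifying $(\OO_{i} \cap \partial \OO) \cap (\OO_{1} \cap \OO_{2}) = \partial(\OO_{1} \cap \OO_{2})$ and $(\partial \OO \cap \OO_{1}) \cap (\partial \OO \cap \OO_{2}) = \partial(\OO_{1} \cap \OO_{2})$, both of which reduce to the set $\OO_{1} \cap \OO_{2} \cap \partial \OO$ and then appeal to item $iii$. One must also check that every piece that appears --- the $\OO_{i}$, the slice $\OO_{1} \cap \OO_{2}$, the boundary portions $\OO_{i} \cap \partial \OO$, and all their pairwise intersections --- is a compact suborbifold admitting a compatible triangulation, so that Proposition \ref{characteristic partition} legitimately applies at each step; this is precisely where the neatness of the decomposition is used.
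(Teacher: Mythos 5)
Your proposal is correct and follows essentially the same route as the paper: both apply Proposition \ref{characteristic partition} to the decompositions of $\OO$, $\partial\OO$, and each $\partial\OO_{i}$ supplied by Proposition \ref{Boundary decomposition}, substitute the hypothesis, and let the $\chi(\OO_{1}\cap\OO_{2})$ terms cancel before invoking item \emph{iii} to identify $\OO_{1}\cap\OO_{2}\cap\partial\OO$ with $\partial(\OO_{1}\cap\OO_{2})$. The only difference is cosmetic bookkeeping (you subtract half of the $\partial\OO$ identity at the end rather than recognizing the expression directly), and your closing remark about compatible triangulations matches the implicit role of neatness in the paper's argument.
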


\begin{proof}
    By Proposition $\ref{Boundary decomposition}$ and Proposition \ref{characteristic partition} we obtain that:

    \begin{equation*}
        \chi(\OO)= \chi(\OO_{1}) + \chi(\OO_{2}) - \chi(\OO_{1} \cap \OO_{2})
    \end{equation*}

    \begin{equation*}
        \chi(\partial \OO) = \chi(\OO_{1}\cap \partial \OO) + \chi(\OO_{2}\cap \partial \OO) - \chi(\OO_{1} \cap \OO_{2} \cap \partial \OO)
    \end{equation*}

    On one hand, by assumption we get that:
    
    $$\chi(\OO) = \frac{1}{2}\Bigl(\chi(\partial \OO_{1}) + \chi(\partial \OO_{2})  \Bigr) - \chi(\OO_{1} \cap \OO_{2})$$

    On the other hand, using decomposition $i)$ from proposition $\ref{Boundary decomposition}$ we have that the latter equals to:

    $$\frac{1}{2}\Bigl(\chi(\OO_{1}\cap \partial \OO) + \chi(\OO_{2}\cap \partial \OO) + 2\chi(\OO_{1}\cap \OO_{2}) - 2 \chi(\OO_{1} \cap \OO_{2} \cap \partial \OO) \Bigl) - \chi(\OO_{1} \cap \OO_{2}) = $$
    $$\frac{1}{2}\Bigl(\chi(\OO_{1}\cap \partial \OO) + \chi(\OO_{2}\cap \partial \OO)\Bigr) - \chi(\OO_{1} \cap \OO_{2} \cap \partial \OO)$$
   
    So we get that the latter equals to:
    $$\frac{1}{2}\chi(\partial \OO) - \frac{1}{2}\chi(\OO_{1} \cap \OO_{2} \cap \partial \OO)$$

    And by item \emph{iii} of Proposition \ref{Boundary decomposition} the latter is equivalent to

    $$\frac{1}{2}\chi(\partial \OO) - \frac{1}{2}\chi \bigl(\partial(\OO_{1} \cap \OO_{2}) \bigr)$$

    And hence the formula is proven.

    \end{proof}

    Next lemma will allow to give an inductive proof for Theorem \ref{big theorem} by giving neat decompositions.    

    \begin{lema}\label{lema importante}
         Let $n$ be odd and $\OO$ a compact $n$-dimensional orbifold with neat decomposition $\{\OO_{1},\OO_{2}\}$. Assume that Theorem \ref{big theorem} holds for $(n-2)$-orbifolds. If $\chi(\OO_{i}) = \frac{1}{2} \chi(\partial \OO_{i})$ for $i=1,2$ then: 
    $$\chi(\OO)=\frac{1}{2}\chi(\partial \OO)$$
    \end{lema}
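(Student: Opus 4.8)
The plan is to derive the statement directly from Lemma \ref{propo para lema importante} together with the inductive hypothesis on $(n-2)$-orbifolds. Since the assumptions $\chi(\OO_i)=\frac{1}{2}\chi(\partial\OO_i)$ for $i=1,2$ are exactly the hypotheses of Lemma \ref{propo para lema importante}, I would begin by applying that lemma to obtain
$$\chi(\OO) = \frac{1}{2}\chi(\partial\OO) - \frac{1}{2}\chi\bigl(\partial(\OO_1\cap\OO_2)\bigr).$$
Thus the entire statement reduces to proving that the correction term vanishes, i.e.\ that $\chi\bigl(\partial(\OO_1\cap\OO_2)\bigr)=0$.

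To control this term I would analyze the suborbifold $\OO_1\cap\OO_2$. Because $\{\OO_1,\OO_2\}$ is a neat decomposition, $\OO_1\cap\OO_2$ is a neat suborbifold of $\OO$ satisfying $\OO_1\cap\OO_2\subseteq\partial\OO_1,\partial\OO_2$. Each $\partial\OO_i$ is an $(n-1)$-orbifold, so $\OO_1\cap\OO_2$ is an $(n-1)$-dimensional orbifold with boundary; being a closed subset of the compact orbifold $\OO$, it is itself compact. Consequently its boundary $\partial(\OO_1\cap\OO_2)$ is a compact $(n-2)$-orbifold, and since the boundary of an orbifold with boundary has empty boundary, $\partial\bigl(\partial(\OO_1\cap\OO_2)\bigr)=\emptyset$.

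Now comes the key point, which uses the parity of $n$: as $n$ is odd, $n-2$ is odd as well, so $\partial(\OO_1\cap\OO_2)$ is a compact odd-dimensional $(n-2)$-orbifold. Invoking the inductive hypothesis that Theorem \ref{big theorem} holds in dimension $n-2$, applied to this boundary, gives
$$\chi\bigl(\partial(\OO_1\cap\OO_2)\bigr) = \frac{1}{2}\chi\Bigl(\partial\bigl(\partial(\OO_1\cap\OO_2)\bigr)\Bigr) = \frac{1}{2}\chi(\emptyset) = 0.$$
Substituting this into the expression coming from Lemma \ref{propo para lema importante} yields $\chi(\OO)=\frac{1}{2}\chi(\partial\OO)$, as desired.

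I expect the main obstacle to be book-keeping rather than deep content: one must justify carefully that $\OO_1\cap\OO_2$ is genuinely an $(n-1)$-dimensional neat suborbifold with a well-behaved $(n-2)$-dimensional boundary, so that the induction hypothesis genuinely applies and $\chi(\emptyset)=0$ can be used. The identity $\partial(\OO_1\cap\OO_2)=\OO_1\cap\OO_2\cap\partial\OO$ from item \emph{iii} of Proposition \ref{Boundary decomposition} is what guarantees that this boundary is again a well-defined closed suborbifold, and the parity shift from $n$ to $n-2$ is precisely what makes the inductive scheme close up.
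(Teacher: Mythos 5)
Your proof is correct and takes essentially the same route as the paper's: both apply Lemma \ref{propo para lema importante} and then eliminate the correction term by invoking the inductive hypothesis on the compact boundaryless $(n-2)$-orbifold $\partial(\OO_{1} \cap \OO_{2}) = \OO_{1} \cap \OO_{2} \cap \partial \OO$, using $\chi(\emptyset)=0$.
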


    \begin{proof}
        Since $\OO_{1} \cap \OO_{2} \subseteq \partial \OO_{i}$ for $i=1,2$ and $\partial(\OO_{1} \cap \OO_{2}) = \OO_{1} \cap \OO_{2} \cap \partial \OO$ we conclude that $\OO_{1} \cap \OO_{2} \cap \partial \OO$ is a $(n-2)$-orbifold such that $\partial (\OO_{1} \cap \OO_{2} \cap \partial \OO) = \partial \partial(\OO_{1} \cap \OO_{2}) = \emptyset$, so by assumption we get:
        $$\chi(\OO_{1} \cap \OO_{2} \cap \partial \OO) = \frac{1}{2} \chi(\partial(\OO_{1} \cap \OO_{2} \cap \partial \OO)) = 0$$

        Additionally, by Lemma \ref{propo para lema importante} we have:

        $$\chi(\OO)=\frac{1}{2}\chi(\partial \OO) - \frac{1}{2}\chi(\OO_{1} \cap \OO_{2} \cap \partial \OO) $$

        Hence the claim follows.
    \end{proof}

\begin{comment}
    For the purpose of the proof of the next theorem let us just state the following: Take $\OO$ a smooth orbifold. Because every smooth orbifold $\OO$ has a riemannian metric then $\OO$ is a metric space and hence is a $T_{4}$ (normal Hausdorff) space. Because $T_{4}$ spaces are completely $T_{2}$ (completely Hausdorff) spaces we have that for $v_{1}, \ldots , v_{n} \in \OO$ there exists neighbourhoods $U(v_{i})$ for each $v_{i}$ such that $\overline{U(v_{i})} \cap \overline{U(v_{j})} = \emptyset$ for $i \neq j$. Because $\OO$ is metric, and hence all open neighbourhoods are union of open balls, we can assume without loss of generality that the neighbourhoods $U(v_{i})$ are spherical. Moreover, for closed sets the topological and the manifold boundaries coincide.
\end{comment}

In the following we prove Theorem \ref{big theorem} for dimension $3$. The proof contains the basic ideas that will apply to prove the general case. We will use that if $A$ is a vertex, a circle or a segment of the singular locus then there exists $\varepsilon > 0$ such that

$$N_{\varepsilon}(A) := \{x \in \OO \mid d(x,A) < \varepsilon\}$$

is a neighborhood of $A$ that is a very good orbifold and such that

$$S_{\varepsilon}(A):= \{x \in \OO \mid d(x,A) = \varepsilon\}$$

is a neat suborbifold of $\OO$, where $d$ is the distance function of $\OO$. This last result will follow from Lemma \ref{S-epsilon is neat submanifold} and Theorem \ref{tubular neighborhood theorem for minimal strats}.

    \begin{exam}\label{big teorema para caso n=3}
        Let $\OO$ be a compact $3$-orbifold, then:
        $$\chi(\OO) = \frac{1}{2}\chi(\partial \OO)$$
    \end{exam}

    \begin{proof}
        We will extract parts of the singular locus until we obtain a manifold, then by Remark \ref{big theorem holds for dimension 1} and the iterative application of Lemma \ref{lema importante} the result will follow.

        Let $A$ be a vertex, a circle or a segment. Then there exists $\varepsilon > 0$ such that $N_{\varepsilon}(A)$ is an open neighborhood of $A$ that is a very good orbifold and $S_{\varepsilon}(A)$ is a neat suborbifold of $\OO$. We can then take 
        
        $$\OO_{1}:= \overline{N_{\varepsilon}(A)} \hspace{20pt} \OO_{2}:= \OO - N_{\varepsilon}(A)$$

        It is clear that $\OO_{1} \cap \OO_{2} \subseteq \partial \OO_{1}, \partial \OO_{2}$ and that $\overline{N_{\varepsilon}(A)}$ is a very good orbifold. Moreover we have that 

        $$\OO_{1} \cap \OO_{2} = S_{\varepsilon}(A)$$

        so we conclude that $\{\OO_{1}, \OO_{2} \}$ is a neat decomposition of $\OO$.

        Note that the singular locus can have vertices, segments and circles. Assume that the singular locus has vertices. Let $v \in \sum_{\OO}$ be a vertex. By the procedure explained there is a neat decomposition of $\OO$, $\{\OO_{1}, \OO_{2}\}$, such that the $v$ is contained in $\OO_{1}$. Note that the same procedure can be done with $\OO_{2}$. We can continue this process until all vertices are extracted in a finite number of steps. Indeed we can do the same for segments and circles until all parts of the singular locus are extracted in finitely many steps, obtaining a manifold. Therefore the result follows by Remark \ref{big theorem holds for dimension 1} and by applying Lemma \ref{lema importante} iteratively.

        If the singular locus does not have vertices the singular locus is a disjoint union of circles. Then by making use of the same procedure to extract the circles the claim follows.
    \end{proof}

    \subsection{Stratification of an orbifold}

    The idea of the latter proof is to dissect the orbifold into very good orbifolds by removing certain parts of the singular locus. Note that a classification of the subgroups of $O(3)$ was used in the process. Nevertheless, it is not needed but only some properties of the singular locus as it will be shown in the following pages. Indeed, we can adapt this proof to the $n$-dimensional case. For this, we first have to define a stratification that will allow to do this task without knowing the classification of subgroups of $O(n)$. We begin by defining a local equivalence relation.

    \begin{rem}
        Let $\OO$ be an orbifold and $\{U_{\alpha}\}_{\alpha}$ be a countable open covering closed under finite intersection of $\OO$. Take $(U_{\lambda}, \widetilde{U_{\lambda}}, \Gamma_{\lambda}, \varphi_{\lambda})$ and  $(U_{\mu}, \widetilde{U_{\mu}}, \Gamma_{\mu}, \varphi_{\mu})$ to be two charts of $\OO$ such that $U_{\lambda} \cap U_{\mu} \neq \emptyset$. Because $\{U_{\alpha}\}_{\alpha}$ is closed under finite intersection, there exists a group $\Gamma_{\lambda,\mu}$ and a diffeomorphism $\varphi_{\lambda,\mu}$ such that  $(U_{\lambda} \cap U_{\mu}, \widetilde{U_{\lambda} \cap U_{\mu}}, \Gamma_{\lambda,\mu}, \varphi_{\lambda,\mu})$ is a chart. 
    \end{rem}

    \begin{comment}
        \begin{defi}[Local relation]
        Let $\OO$ be a smooth orbifold. For every local chart $U$ we say that $x,y \in U$ are locally related, denoted as $x \overset{l}{\sim} y$, if $\Gamma_{x} \cong \Gamma_{y}$.
    \end{defi}
    \end{comment}

    \begin{defi}[Local relation]
        Let $\OO$ be a smooth orbifold and let $x,y \in \OO$, we will say that $x$ and $y$ are locally related, denoted as $x \overset{l}{\sim} y$, if there exists local charts $(U_{x}, \widetilde{U_{x}}, \Gamma_{x}, \varphi_{x})$, $(U_{y}, \widetilde{U_{y}}, \Gamma_{y}, \varphi_{y})$ (where $\Gamma_{x}$ and $\Gamma_{y}$ denote the local groups of $x$ and $y$ respectively) around $x$ and $y$, respectively such that $U_{x} \cap U_{y} \neq \emptyset$ and there exists a diagram

        \begin{center}
            \begin{tikzcd}
\Gamma_{x} &  & {\Gamma_{x,y}} \arrow[rr, "f_{y}", hook] \arrow[ll, "f_{x}"', hook'] &  & \Gamma_{y}
\end{tikzcd}
\end{center}

such that the injective homomorphisms $f_{x}$ and $f_{y}$ of the definition of orbifold are isomorphisms, $\varphi_{x}$ is $f_{x}$-equivariant, $\varphi_{y}$ is $f_{y}$-equivariant and $\Gamma_{x,y}$ is such that 
$$(U_{x} \cap U_{y}, \widetilde{U_{x} \cap U_{y}}, \Gamma_{x,y}, \varphi_{x,y})$$
is a chart.
        
    \end{defi}

    This local relation can be globalised as follows.

    \begin{defi}
        Let $\OO$ be a smooth orbifold and $x,y \in \OO$. Then $x$ and $y$ are related, denoted as $x \sim y$, if there exists a finite sequence $\{a_{i}\}_{i \in \{1, \ldots, n\}} \subset \OO$ such that $x \overset{l}{\sim} a_{1} \overset{l}{\sim} a_{2} \overset{l}{\sim} \ldots \overset{l}{\sim} a_{n} \overset{l}{\sim} y$.
    \end{defi}

    It is clear that $\sim$ is an equivalence relation. With this defintion a stratification for orbifolds, and in particular also for the singular locus, arises. Let $\OO$ be a smooth $n$-orbifold and $x,y \in \OO$. We say that $x$ and $y$ belong to the same stratum if $x \sim y$. We define $Strat(\OO)$ to be the set of all strata of $\OO$.

    \begin{rem}\label{strata is disjoint}
        Note that since strata are defined as equivalence classes, two different strata are disjoint.
    \end{rem}

    \begin{rem}[Stratum is arc-connected]\label{Stratum is connected}
        By construction every stratum is arc-connected, hence connected.
    \end{rem}

    Some properties of this stratification are studied in the following. 

    \begin{nota}
        Let $S \subset \OO$ to be a stratum. Then $\forall x \in S$ $\Gamma_{x} \cong \Gamma$ for some $\Gamma < O(n)$. In this situation we put $G(S)=\Gamma$ to denote that the constant local group associated is $\Gamma$.
    \end{nota}
    Note that a stratum is determined by the local group but different strata can have the same local group associated. Indeed, let $\OO$ be a $n$-orbifold, then we have the following a stratification for the singular locus:
    $$\Sigma_{\OO} = \bigcup_{\{e\} \neq H \in \mathcal{G}(\OO)}\{S \in Strat(\OO) \mid G(S) = H\}$$

    where $\mathcal{G}(\OO)$ is the set of local groups of $\OO$.

\begin{comment}
     \begin{prop}\label{strat belongs to finite number of closures of bigger strats}
        Let $S \in Strat(\OO)$, then $\exists k \in \NN$ such that $S \in \overline{T_{1}} , \ldots , \overline{T_{k}}$ where $T_{i} \in Strat(\OO)$ such that $G(T_{i}) < G(S)$ for $i=1, \ldots , k$.
    \end{prop}
    \begin{proof}
        Suppose by contradiction that $S \in \overline{T_{i}}$ for $i \in A$ where $|A| = \infty$. We prove that 
    \end{proof}
\end{comment}

    The behaviour of the closure of a stratum is strongly related to the behaviour of the stratum, as the following proposition states.

    \begin{lema}\label{G(S) hookrightarrow Gamma_x for x in closure(S)-S}
        Let $S \in Strat(\OO)$. If $\overline{S} \backslash S \neq \emptyset$, for every $x \in \overline{S} \backslash S$ the natural map
        $$G(S) \hookrightarrow \Gamma_{x}$$
        is injective but not surjective.
    \end{lema}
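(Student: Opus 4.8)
The plan is to produce the claimed injection by transporting the local group of a nearby point of $S$ into $\Gamma_x$, and then to read off non-surjectivity directly from the hypothesis $x \notin S$.

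First I would establish injectivity. Since $x \in \overline{S}$, every neighborhood of $x$ meets $S$; in particular a fundamental chart $(U_x, \widetilde{U_x}, \Gamma_x, \varphi_x)$ of $x$ contains some point $s \in S$. Applying Remark \ref{point inside fundamental chart implies injection of local groups} with $y = s$ yields a fundamental chart $(U_s, \widetilde{U_s}, \Gamma_s, \varphi_s)$ of $s$ with $U_s \subseteq U_x$, together with a natural injection $\Gamma_s \hookrightarrow \Gamma_x$. Since $s \in S$ we have $\Gamma_s \cong G(S)$, and composing gives the desired natural injection $G(S) \hookrightarrow \Gamma_x$.

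Next I would prove non-surjectivity by contradiction. Suppose the injection $G(S) \cong \Gamma_s \hookrightarrow \Gamma_x$ were surjective, hence an isomorphism. Because $U_s \subseteq U_x$, the overlap $U_x \cap U_s = U_s$ is itself a fundamental chart, so the group $\Gamma_{x,s}$ attached to it is $\Gamma_s$, and the homomorphisms $f_s \colon \Gamma_{x,s} \to \Gamma_s$ and $f_x \colon \Gamma_{x,s} \to \Gamma_x$ supplied by the atlas are respectively the identity and the natural injection, with $\varphi_s$ and $\varphi_x$ equivariant by the compatibility condition in the definition of orbifold. Under the assumption both $f_s$ and $f_x$ are isomorphisms, which is exactly the data required in the definition of the local relation; hence $x \overset{l}{\sim} s$, so $x \sim s$, and therefore $x$ and $s$ lie in the same stratum, i.e. $x \in S$. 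This contradicts $x \in \overline{S} \backslash S$, so the injection cannot be surjective.

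The main obstacle I anticipate is the careful verification that surjectivity of the natural injection genuinely realizes the local relation $x \overset{l}{\sim} s$: one must check that the chart inclusion $U_s \subseteq U_x$ furnishes the diagram appearing in the definition of $\overset{l}{\sim}$ with $f_x$ and $f_s$ both isomorphisms and the correct equivariance, rather than merely producing abstractly isomorphic groups. A secondary point worth addressing is that the map is truly \emph{natural}, i.e. independent (up to the isomorphism ambiguity inherent in local groups) of the auxiliary choice of $s \in S \cap U_x$; this follows because any two admissible choices are linked through the transition data of the atlas, which identifies the resulting injections up to conjugation in $\Gamma_x$.
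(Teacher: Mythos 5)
Your proof is correct, and it reaches the lemma by a route that genuinely differs from the paper's in both halves. For injectivity, the paper works upstairs in a chart: it lifts a neighborhood of $x$, observes that lifts of points of $S$ land in $\Fix(G(S))$, and uses the closedness of this fixed-point set to conclude that the lift $\widetilde{x}$ itself lies in $\Fix(G(S))$, so that $G(S)$ sits inside the isotropy group of $\widetilde{x}$, i.e.\ inside $\Gamma_{x}$. You stay downstairs: you pick $s \in S \cap U_{x}$ and apply Remark \ref{point inside fundamental chart implies injection of local groups} to the pair $(x,s)$, obtaining the injection $G(S) \cong \Gamma_{s} \hookrightarrow \Gamma_{x}$ directly; this is the same device the paper itself uses later in the proof of Proposition \ref{closure(S) - S is compact}, so your argument is well within the paper's toolkit and is arguably shorter. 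The more substantive difference is at the non-surjectivity step. The paper dispatches it in one sentence --- $\Gamma_{x}$ cannot embed into $G(S)$ ``since $x \notin S$'', hence $\Gamma_{x} \ncong G(S)$ --- which, read literally, is incomplete: the paper itself notes that distinct strata can carry isomorphic local groups, so $x \notin S$ alone does not exclude an abstract isomorphism $\Gamma_{x} \cong G(S)$. What is actually needed is precisely what you prove: if the atlas-supplied map $f_{x} \colon \Gamma_{x,s} = \Gamma_{s} \to \Gamma_{x}$ coming from $U_{s} = U_{x} \cap U_{s} \subseteq U_{x}$ were onto, then both $f_{x}$ and $f_{s}$ would be isomorphisms with the required equivariance, realizing the diagram in the definition of the local relation, so $x \overset{l}{\sim} s$ and hence $x \in S$, a contradiction. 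So your write-up not only follows a different path but also supplies the justification the paper leaves implicit; note that it is essential (as you do) to argue with the \emph{natural} injection rather than with abstract isomorphism classes, and your closing observation that the map is well defined up to conjugation in $\Gamma_{x}$ ensures non-surjectivity is independent of the choice of $s$. What the paper's $\Fix$-based computation buys in exchange is the chart-level picture that lifts of $\overline{S} \setminus S$ lie in $\Fix(G(S))$, the same fixed-locus description that underlies Lemma \ref{Minimal strat is manifold} and the closedness of minimal strata (Corollary \ref{minimal strata are closed sets}).
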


    \begin{proof}
        Take $x \in \overline{S} \backslash S$. By construction for every neighborhood $N_{x}$ of $x$ we have $N_{x} \cap S \neq \emptyset$. Let $\widetilde{S}$, $\widetilde{N_{x}}$ and $\widetilde{x}$ to be lifts of $S$, $N_{x}$ and $x$, respectively. Then we also have that $\widetilde{S} \cap \widetilde{N_{x}} \neq \emptyset$ and that $\widetilde{S} \cap \widetilde{N_{x}} \subset \widetilde{S} \overset{\psi}{\hookrightarrow} \Fix(G(S))$. Hence 
        $$\psi(\widetilde{N_{x}}) \cap \Fix(G(S)) \neq \emptyset$$
        
        Then $\psi(\widetilde{x}) \in \overline{\Fix(G(S))}$. But $\overline{\Fix(G(S))} = \Fix(G(S))$ since $\Fix(G(S))$ is a closed set. Therefore $\psi(\widetilde{x}) \in \Fix(G(S))$ and $\overline{\widetilde{S}} \backslash \widetilde{S} \hookrightarrow \Fix(G(S))$, hence $G(S) \hookrightarrow \Gamma_{x}$. Moreover, $\widetilde{S} \hookrightarrow \Fix(G(S))$ so $\overline{\widetilde{S}} \backslash \widetilde{S} \hookrightarrow \Fix(G(S))$. Also it is clear that $\Gamma_{x}$ cannot be embeded into $ G(S)$ since $x \notin S$ and therefore $\Gamma_{x} \ncong G(S)$, so $G(S) \hookrightarrow \Gamma_{x}$ is not surjective.
    \end{proof}

    \begin{coro}\label{there does not exist injection of group of frontier to G(S)}
        Let $\overline{S} \backslash S \neq \emptyset$ and $x \in \overline{S} \backslash S$. Then there does not exist any injection
        $$\Gamma_{x} \hookrightarrow G(S)$$
    \end{coro}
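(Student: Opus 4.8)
The plan is to derive this immediately from the preceding Lemma~\ref{G(S) hookrightarrow Gamma_x for x in closure(S)-S} by a cardinality argument, exploiting that all local groups of an orbifold are finite. The key observation is that the previous lemma already locates both groups in a single chain of inclusions: it tells us that $G(S)$ injects into $\Gamma_{x}$ but that this injection fails to be surjective. For finite groups, a non-surjective injection is a proper inclusion, so this gives the strict inequality $|G(S)| < |\Gamma_{x}|$.

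First I would record that the local groups $G(S)$ and $\Gamma_{x}$ are finite, which holds by the very definition of an orbifold atlas (each $\Gamma_{i}$ is a finite group). Next I would invoke Lemma~\ref{G(S) hookrightarrow Gamma_x for x in closure(S)-S} to obtain the injective, non-surjective homomorphism $G(S) \hookrightarrow \Gamma_{x}$, and conclude $|G(S)| < |\Gamma_{x}|$. Finally, I would argue by contradiction: any injection $\Gamma_{x} \hookrightarrow G(S)$ would force $|\Gamma_{x}| \leq |G(S)|$, directly contradicting the strict inequality just established. Hence no such injection can exist.

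There is no genuine obstacle here, since the statement is a formal consequence of the previous lemma; the only point worth stating explicitly is the finiteness of the groups, which is what converts ``injective but not surjective'' into the strict order relation $|G(S)| < |\Gamma_{x}|$ that makes a reverse injection impossible. I would keep the write-up to two or three sentences, emphasizing that this corollary merely repackages the non-surjectivity clause of the lemma in a convenient contrapositive form for later use in constructing neat decompositions.
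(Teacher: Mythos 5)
Your argument is exactly the paper's: both invoke Lemma~\ref{G(S) hookrightarrow Gamma_x for x in closure(S)-S} to get a non-surjective injection $G(S) \hookrightarrow \Gamma_{x}$, deduce $|G(S)| < |\Gamma_{x}|$ from finiteness, and conclude that no reverse injection can exist. Your write-up is correct and, if anything, slightly more explicit than the paper about where finiteness of the local groups is used.
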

    \begin{proof}
        By Lemma \ref{G(S) hookrightarrow Gamma_x for x in closure(S)-S} there is an injection $G(S) \hookrightarrow \Gamma_{x}$ that is not surjective, hence 
        $$|G(S)| < |\Gamma_{x}|$$

        Whence we conclude that it cannot exist any injection 
        $$\Gamma_{x} \hookrightarrow G(S)$$
    \end{proof}

    \begin{prop}\label{closure(S) - S is compact}
        Let $S \in Strat(\OO)$. Then $\overline{S} \backslash S$ is compact, and hence a closed set.
    \end{prop}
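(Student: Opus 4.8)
The plan is to exploit the compactness of $\OO$ to reduce the claim to showing that $\overline{S} \backslash S$ is closed, and then to deduce closedness from the local behaviour of the local groups recorded in Lemma \ref{G(S) hookrightarrow Gamma_x for x in closure(S)-S} and Corollary \ref{there does not exist injection of group of frontier to G(S)}. Since $\OO$ is compact and $\overline{S}$ is closed (being a closure), it suffices to prove that $\overline{S} \backslash S$ is closed in $\OO$; equivalently, that $S$ is open in $\overline{S}$.

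To see that $S$ is open in $\overline{S}$, I would fix $x \in S$ and choose a fundamental chart $(U_{x}, \widetilde{U_{x}}, \Gamma_{x}, \varphi_{x})$ of $x$, so that $\Gamma_{x} \cong G(S)$. The goal is to show $U_{x} \cap \overline{S} \subseteq S$, i.e. that the fundamental chart contains no frontier point of $S$. By Remark \ref{point inside fundamental chart implies injection of local groups}, every point $z \in U_{x}$ admits a natural injection $\Gamma_{z} \hookrightarrow \Gamma_{x} \cong G(S)$. On the other hand, Corollary \ref{there does not exist injection of group of frontier to G(S)} asserts that no point $z \in \overline{S} \backslash S$ admits an injection $\Gamma_{z} \hookrightarrow G(S)$. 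These two facts are incompatible, so $U_{x}$ can contain no point of $\overline{S} \backslash S$; that is, $U_{x} \cap (\overline{S} \backslash S) = \emptyset$.

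Consequently $U_{x} \cap \overline{S} \subseteq S$, and since $x \in S$ was arbitrary this exhibits $S$ as an open subset of $\overline{S}$. Hence $\overline{S} \backslash S = \overline{S} \cap (\OO \backslash S)$ is closed in $\overline{S}$, and therefore closed in $\OO$ because $\overline{S}$ is itself closed. Finally, a closed subset of the compact space $\OO$ is compact, which yields the statement.

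I do not expect a serious obstacle here: the entire argument rests on transporting the group-theoretic obstruction of Corollary \ref{there does not exist injection of group of frontier to G(S)} into a fundamental chart by way of Remark \ref{point inside fundamental chart implies injection of local groups}. The only point requiring care is to insist that the chart around $x$ be a \emph{fundamental} chart, so that its associated group is exactly $G(S)$ rather than some larger group; this is precisely what makes the injection $\Gamma_{z} \hookrightarrow G(S)$ available for all $z \in U_{x}$ and forces the contradiction with the frontier points.
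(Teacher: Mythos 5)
Your argument is correct and rests on exactly the same two ingredients as the paper's proof: the injection $\Gamma_{z} \hookrightarrow \Gamma_{x} \cong G(S)$ from Remark \ref{point inside fundamental chart implies injection of local groups} inside a fundamental chart, played against Corollary \ref{there does not exist injection of group of frontier to G(S)}. The paper phrases this sequentially (a sequence in $\overline{S} \backslash S$ cannot converge to a point of $S$), whereas you show directly that $S$ is open in $\overline{S}$; your formulation is if anything slightly cleaner, since it avoids the paper's implicit passage to a convergent subsequence.
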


    \begin{proof}
        If $\overline{S} \backslash S = \emptyset$ it is clear. Assume $\overline{S} \backslash S \neq \emptyset$. Let $\{x_{n}\}_{n \in \NN} \subseteq \overline{S} \backslash S $ be a sequence. Because $\overline{S}$ is compact this sequence converges to $x$ at $\overline{S}$. Recall that given a neighborhood of $x$ there exists some $k \in \NN$ such that $x_{i}$ belongs to that neighborhood for $i \geq k$. For instance, take $(U_{x}, \widetilde{U_{x}}, \Gamma_{x}, \varphi_{x})$ a fundamental chart of $x$. Then there exists some $k \in \NN$ such that $x_{i} \in U_{x}$ for $i \geq k$. By Remark \ref{point inside fundamental chart implies injection of local groups} we have
        
        $$\Gamma_{x_{k}} \hookrightarrow \Gamma_{x}$$
        
        Now assume that $x \in S$. By definition we conclude that we have an injection: 
        
        $$\Gamma_{x_{i}} \hookrightarrow \Gamma_{x} \cong G(S)$$
        
        for $i \geq k$, in contradiction with Corollary \ref{there does not exist injection of group of frontier to G(S)}. Therefore we conclude that $x \in \overline{S} \backslash S$ and hence $\overline{S} \backslash S$ is compact.
    \end{proof}

    We can define a partial order on $Strat(\OO)$. 
    
    \begin{defi}
        Let $\OO$ be a $n$-orbifold and let $S_{1}, S_{2} \in Strat(\OO)$, then we define a binary relation by stating that $S_{1} \leq S_{2}$ if $S_{1} \subseteq \overline{S_{2}}$.
    \end{defi}
    It is clear that the relation is reflexive and transitive so it is a preorder. Moreover, the relation is a partial order.

    \begin{lema}
        The relation $\leq$ is antisymmetrical.
    \end{lema}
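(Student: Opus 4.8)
The plan is to argue by contradiction, using the disjointness of distinct strata together with the strict cardinality inequality for local groups furnished by Lemma \ref{G(S) hookrightarrow Gamma_x for x in closure(S)-S}. Suppose $S_{1} \leq S_{2}$ and $S_{2} \leq S_{1}$, that is, $S_{1} \subseteq \overline{S_{2}}$ and $S_{2} \subseteq \overline{S_{1}}$, and assume toward a contradiction that $S_{1} \neq S_{2}$.

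First I would invoke Remark \ref{strata is disjoint} to conclude that $S_{1} \cap S_{2} = \emptyset$. Combined with $S_{1} \subseteq \overline{S_{2}}$ this gives $S_{1} \subseteq \overline{S_{2}} \backslash S_{2}$, and symmetrically $S_{2} \subseteq \overline{S_{1}} \backslash S_{1}$. In particular both frontiers $\overline{S_{2}} \backslash S_{2}$ and $\overline{S_{1}} \backslash S_{1}$ are nonempty, so Lemma \ref{G(S) hookrightarrow Gamma_x for x in closure(S)-S} applies to each of them.

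Next, picking any $x \in S_{1}$, we have $x \in \overline{S_{2}} \backslash S_{2}$, and since the local group is constant along a stratum, $\Gamma_{x} \cong G(S_{1})$. The lemma yields an injection $G(S_{2}) \hookrightarrow \Gamma_{x}$ that is not surjective, whence $\abs{G(S_{2})} < \abs{G(S_{1})}$. Running the same argument with the roles of $S_{1}$ and $S_{2}$ interchanged, taking $y \in S_{2} \subseteq \overline{S_{1}} \backslash S_{1}$ with $\Gamma_{y} \cong G(S_{2})$, produces $\abs{G(S_{1})} < \abs{G(S_{2})}$. These two strict inequalities are incompatible, so the assumption $S_{1} \neq S_{2}$ is untenable and $S_{1} = S_{2}$, establishing antisymmetry.

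I do not expect a genuine obstacle here: the only points that require care are checking that disjointness forces the two containments to land inside the respective frontiers, so that the lemma is actually applicable, and recalling that $G(S)$ is indeed the common isomorphism type of $\Gamma_{x}$ for $x \in S$. One could equivalently route the contradiction through Corollary \ref{there does not exist injection of group of frontier to G(S)}, which directly forbids an injection $\Gamma_{x} \hookrightarrow G(S)$ for $x$ in the frontier of $S$.
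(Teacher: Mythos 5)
Your proof is correct and follows essentially the same route as the paper: both rest on Lemma \ref{G(S) hookrightarrow Gamma_x for x in closure(S)-S} and the resulting strict comparison of the local groups $G(S_1)$ and $G(S_2)$, with the dichotomy ``$S_1 = S_2$ or $S_1 \subseteq \overline{S_2}\backslash S_2$'' coming from the disjointness of distinct strata. Your contradiction phrasing via the cardinality inequalities is a slightly more streamlined packaging of the paper's case analysis, but the substance is identical.
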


    \begin{proof}
        Take $S_{1},S_{2} \in Strat(\OO)$ such that $S_{1} \leq S_{2}$ and $S_{2} \leq S_{1}$, then by Proposition \ref{G(S) hookrightarrow Gamma_x for x in closure(S)-S} we have:
    $$S_{1} \subseteq \overline{S_{2}} = S_{2} \cup \overline{S_{2}} \backslash S_{2} \hspace{20pt} S_{2} \subseteq \overline{S_{1}} = S_{1} \cup \overline{S_{1}} \backslash S_{1}$$
    
    where for every $x \in \overline{S_{2}} \backslash S_{2}$ $G(S_{2}) \hookrightarrow \Gamma_{x}$ and for every $y \in \overline{S_{1}} \backslash S_{1}$ $G(S_{1}) \hookrightarrow \Gamma_{y}$ are the natural injections. If $S_{1} \subseteq S_{2}$ then $G(S_{1})=G(S_{2})$ and if $S_{1} \subseteq \overline{S_{2}} \backslash S_{2}$ we have that $G(S_{2}) < G(S_{1})$, so we conclude that $G(S_{2}) \leq G(S_{1})$. The same argument shows that $G(S_{1}) \leq G(S_{2})$ so we have that $G(S_{1})=G(S_{2})$. Then $S_{1} \cap \overline{S_{2}}\backslash S_{2} = \emptyset$ so $S_{1} \subseteq S_{2}$ and the same argument shows that $S_{2} \subseteq S_{1}$ so we conclude that $S_{1} = S_{2}$ and hence the relation is antisymmetrical.
    \end{proof}

    Provided that is a partial order we have a partition of $Strat(\OO)$ in chains determined by the binary relation $\leq$. 

    \begin{defi}[Chain and minimal stratum]\label{defi chain and minimal stratum}
        Let $S_{1} < \ldots < S_{n}$ where $S_{i} \in Strat(\OO)$ for $i=1, \ldots , n$, then we say that $(S_{1}, \ldots , S_{n})$ is a $n$-chain.

        \begin{enumerate}
            \item If $s = (S_{1}, \ldots , S_{n})$ is a $n$-chain we denote its $i$-th term by $s[i]$, this is $s[i] = S_{i}$.
            \item We say that the chain is \emph{upper complete} if $\nexists S \in Strat(\OO)$ such that $S_{n} < S$ and \emph{lower complete} if $\nexists S' \in Strat(\OO)$ such that $S' < S_{1}$.
            \item We say that the chain is \emph{full} if $\forall S_{i}$ $\nexists S' \neq S_{i},S_{i+1}$ such that $S_{i} < S' < S_{i+1}$.
            \item The chain is \emph{complete} if it is upper and lower complete and full. We denote by $cch(\OO)$ the set of complete chains of an orbifold $\OO$.
            \item A minimal element of a complete chain is called \emph{minimal stratum}.
        \end{enumerate}
    \end{defi}

    \begin{exam}[Complete chains of $\quot{D^{3}}{T_{12}}$]\label{complete chains of D^3 quot T_12}
        Take $\OO=\quot{D^{3}}{T_{12}}$ where $T_{12}$ is the tetrahedral group, the group of orientation preserving symmetries of the tetrahedron. We know that the singular locus is a trivalent graph where two edges, namely $E_{1}$ and $E_{2}$, have $G(E_{1})=\ZZ_{3}=G(E_{2})$, and the other edge, $E_{3}$, has $G(E_{3})=\ZZ_{2}$, with $G(E_{i})$ acting by rotation for $i=1,2,3$ and the origin, $O$, that has $G(O)=\Gamma_{O}=T_{12}$. Then we have 
        $$cch(\OO)=\{(O,E_{1}), (O,E_{2}), (O,E_{3})\}$$

        \begin{figure}[h]
    \centering
    \includegraphics[width=0.20\textwidth]{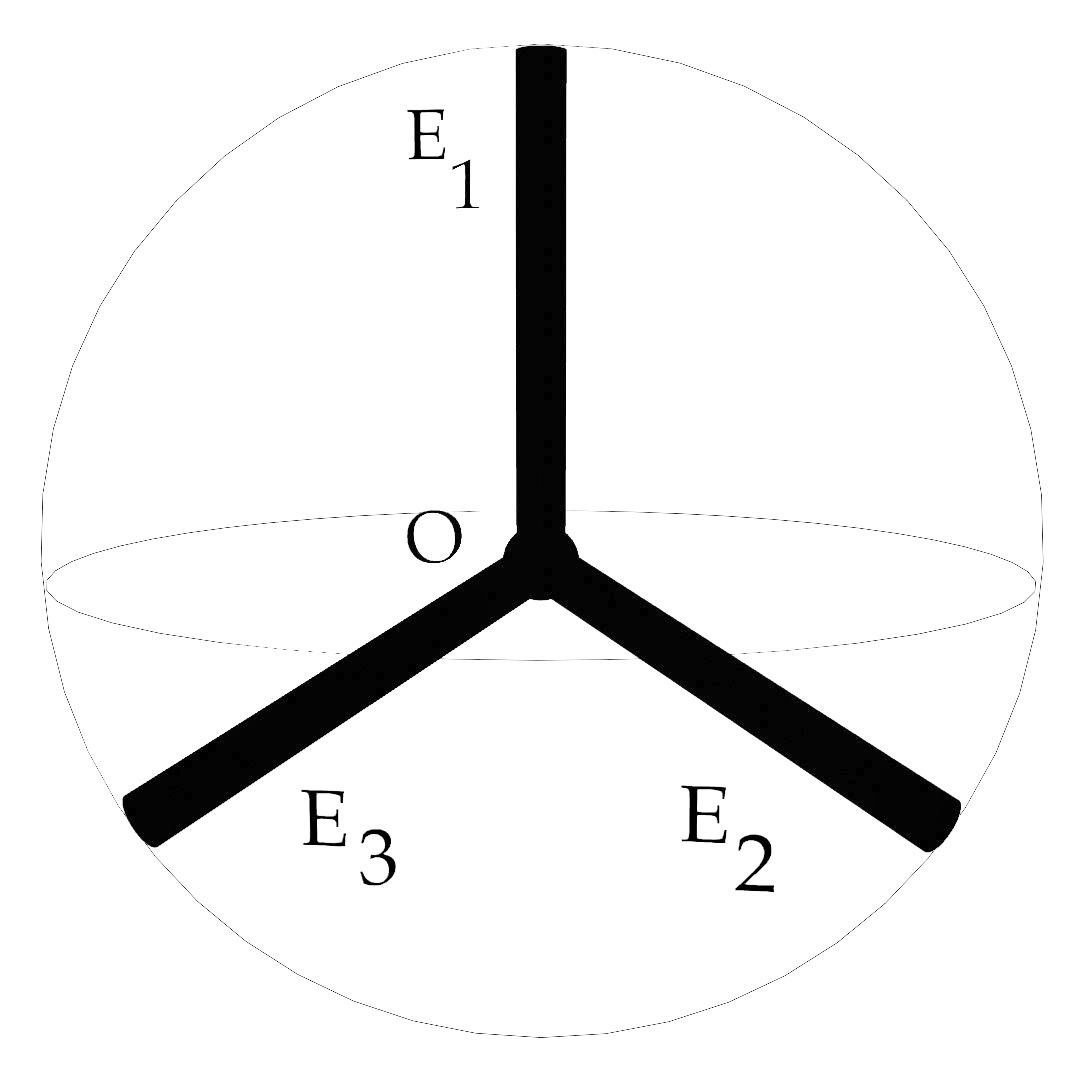}
    \caption{$\quot{D^{3}}{T_{12}}$}
    \label{quot{D^{3}}{T_{12}}}
\end{figure}

    \end{exam}

     As a corollary of Lemma \ref{G(S) hookrightarrow Gamma_x for x in closure(S)-S} we have the following.

     \begin{coro}\label{minimal strata are closed sets}
         Let $S \in Strat(\OO)$ be a minimal stratum. Then $S$ is a closed set, and hence compact.
     \end{coro}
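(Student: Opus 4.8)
The plan is to show that a minimal stratum $S$ satisfies $\overline{S} \backslash S = \emptyset$; once this is established, $S = \overline{S}$ is closed, and since $\OO$ is compact, $S$ is a closed subset of a compact space and hence compact. So the entire content is to rule out $\overline{S}\backslash S \neq \emptyset$, and I would argue by contradiction. Assume $\overline{S} \backslash S \neq \emptyset$ and pick $x \in \overline{S} \backslash S$. By Remark \ref{strata is disjoint} the point $x$ lies in a unique stratum $T$, and since $x \notin S$ we have $T \neq S$. By Lemma \ref{G(S) hookrightarrow Gamma_x for x in closure(S)-S} the natural map $G(S) \hookrightarrow \Gamma_x = G(T)$ is injective but not surjective, so $|G(S)| < |G(T)|$; in particular $T$ is genuinely more singular than $S$.

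The goal is to upgrade the single relation $x \in \overline{S}$ to the stratumwise relation $T \subseteq \overline{S}$, which by definition gives $T \leq S$; combined with $T \neq S$ this yields $T < S$, contradicting the lower completeness built into the notion of minimal stratum (Definition \ref{defi chain and minimal stratum}). The key step, and the one I expect to be the main obstacle, is this saturation claim: $\overline{S}$ is a union of entire strata, so that $x \in \overline{S}$ forces $T \subseteq \overline{S}$.

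I would prove saturation by showing that membership of a point in $\overline{S}$ is invariant under the local relation $\overset{l}{\sim}$, and then propagating along the finite chains defining $\sim$. Concretely, if $x \overset{l}{\sim} y$ then there are fundamental charts of $x$ and $y$ overlapping in a common chart with group $\Gamma_{x,y} < O(n)$, in which both $x$ and $y$ lift to points whose isotropy is the full group $\Gamma_{x,y}$; hence both lifts lie in the linear subspace $\Fix(\Gamma_{x,y})$. Since the linear action commutes with translations along $\Fix(\Gamma_{x,y})$, the local configuration of the lift of $S$ (which sits inside $\Fix(G(S)) \supseteq \Fix(\Gamma_{x,y})$, as in the proof of Lemma \ref{G(S) hookrightarrow Gamma_x for x in closure(S)-S}) is invariant under such translations; therefore $S$ accumulates at the lift of $x$ if and only if it accumulates at the lift of $y$, i.e. $x \in \overline{S} \iff y \in \overline{S}$. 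Iterating over the finite chain that witnesses $x \sim z$ for any $z \in T$ yields $T \subseteq \overline{S}$.

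With saturation in hand the contradiction is immediate: $T \subseteq \overline{S}$ means $T \leq S$, and $T \neq S$ (as $x \in T$ but $x \notin S$) gives $T < S$, contradicting that $S$, being the minimal element of a complete chain, admits no stratum strictly below it. Hence $\overline{S} \backslash S = \emptyset$, so $S = \overline{S}$ is closed, and compactness follows since $\OO$ is compact. I would also note that only the homogeneity of the linear model along its fixed subspace is used, so no classification of the finite subgroups of $O(n)$ is required, consistent with the purpose of the stratification.
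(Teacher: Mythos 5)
Your skeleton is in fact the paper's: pick $x \in \overline{S} \setminus S$, use Lemma \ref{G(S) hookrightarrow Gamma_x for x in closure(S)-S} to place $x$ in a stratum $T \neq S$ with $G(S)$ embedding strictly into $G(T)$, and contradict minimality. You are also right about where the real work is: to contradict Definition \ref{defi chain and minimal stratum} one must upgrade $x \in \overline{S}$ to $T \subseteq \overline{S}$, so that $T < S$ in the partial order. The paper's own one-line proof of the corollary uses this silently, and supplies it separately as Proposition \ref{if x in closure(S) then S_x in closure S}, proved by a connectedness argument: $T \cap \overline{S}$ is closed in $T$ for free, open in $T$ by a computation inside a single fundamental chart, hence equal to $T$ by Remark \ref{Stratum is connected}. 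So your instinct to isolate and prove the saturation claim is good; the problem is your proof of it.

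The gap is twofold. First, the local relation does not give you what you use: $x \overset{l}{\sim} y$ only requires fundamental charts $U_x \ni x$ and $U_y \ni y$ with $U_x \cap U_y \neq \emptyset$ and compatible groups over the overlap; neither $x$ nor $y$ need lie in $U_x \cap U_y$. Hence there is no single linear chart containing lifts of both points, and the ``translation along $\Fix(\Gamma_{x,y})$ carrying the lift of $x$ to the lift of $y$'' on which your whole argument pivots is simply undefined. Second, even if both lifts did sit in one chart $\widetilde{U}$ with group $\Gamma$, the invariance you assert is not free. What linearity of the action gives is that each exact-isotropy set $\{v \in \widetilde{U} : \Gamma_v = H\}$ is invariant under translation by vectors of $\Fix(\Gamma)$. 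But the lift of $S$ to the chart is a priori only a subset of the union of these sets over subgroups $H \cong G(S)$ --- the paper states explicitly that different strata can carry the same local group --- and knowing that this subset is a union of whole components of exact-isotropy sets (hence translation-invariant) is essentially the saturation property you are trying to establish; as written, the argument is circular exactly at its crux. The repair is to abandon the translation/propagation scheme and argue as the paper does: inside one fundamental chart of a point $y \in T \cap \overline{S}$, where $T$ lifts into the fixed locus of the chart group, show that $S$ accumulates at every point of $T$ near $y$, i.e. that $T \cap \overline{S}$ is open in $T$; then connectedness of $T$ (Remark \ref{Stratum is connected}) finishes the saturation claim, and your concluding paragraph goes through verbatim.
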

     \begin{proof}
         Let $S$ be a minimal stratum. Assume that $\overline{S} \backslash S \neq \emptyset$. Let $x \in \overline{S} \backslash S \neq \emptyset$, then by the proof of Lemma \ref{G(S) hookrightarrow Gamma_x for x in closure(S)-S} we have that $x$ belongs to a stratum, $S^{'}$, such that $G(S) < G(S^{'})$, in contradiction with $S$ being a minimal stratum. Hence we conclude that $\overline{S} \backslash S = \emptyset$, this is, $\overline{S} = S$, or in other words, $S$ is a closed set.
     \end{proof}

     \begin{prop}\label{if x in closure(S) then S_x in closure S}
         Let $\OO$ be an orbifold and $S \in Strat(\OO)$. If $x \in \overline{S}$ then $S_{x} \subseteq \overline{S}$ where $S_{x}$ is a stratum such that $x \in S_{x}$.
     \end{prop}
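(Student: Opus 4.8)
The plan is to exploit that every stratum is connected (Remark \ref{Stratum is connected}) and to show that $C := S_x \cap \overline S$ is nonempty, relatively closed, and relatively open in $S_x$, so that $C = S_x$ and hence $S_x \subseteq \overline S$. Nonemptiness is clear since $x \in C$, and $C$ is relatively closed as the intersection of $S_x$ with the closed set $\overline S$. If $x \in S$, then $S_x = S$ by disjointness of strata (Remark \ref{strata is disjoint}) and the statement is immediate; hence I may assume $x \notin S$, so that $S_x \neq S$ and every point of $C$ lies in $\overline S \setminus S$.

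The main work is to show that $C$ is open in $S_x$. I would fix $y \in C$ and choose a fundamental chart $(U_y, \widetilde{U_y}, \Gamma_y, \varphi_y)$ with $\widetilde{U_y} \subseteq \RR^n$ and $\Gamma_y = G(S_x) < O(n)$ acting linearly (Proposition \ref{every smooth orbifold is locally homeomorphic to R^n/Gamma}), and let $\widetilde y$ be a lift of $y$, so that $\widetilde y \in \Fix(\Gamma_y)$. First I would identify the two strata inside the chart: the points of $U_y$ whose local group has maximal order are exactly the image of $\Fix(\Gamma_y)$, and they are all locally related to $y$, so
$$S_x \cap U_y = \varphi_y\bigl(\Fix(\Gamma_y) \cap \widetilde{U_y}\bigr).$$
Since $y \in \overline S \setminus S$, Lemma \ref{G(S) hookrightarrow Gamma_x for x in closure(S)-S} provides a natural injection $G(S) \hookrightarrow \Gamma_y$, which I realize as a subgroup inclusion $G(S) \leq \Gamma_y$ in this chart; writing $\widetilde S$ for the lift of $S \cap U_y$, its proof also yields $\widetilde S \subseteq \Fix(G(S))$ together with $\widetilde y \in \overline{\widetilde S}$. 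Because $G(S) \leq \Gamma_y$, one gets the crucial inclusion $\Fix(\Gamma_y) \subseteq \Fix(G(S))$.

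It then remains to prove that a neighborhood of $\widetilde y$ in $\Fix(\Gamma_y)$ lies in $\overline{\widetilde S}$; projecting this down shows that $S_x \cap U_y \subseteq \overline S$ near $y$, hence that $C$ is open, and connectedness of $S_x$ forces $C = S_x$. For this I would use that $\Fix(G(S))$ is a linear subspace on which the points of isotropy exactly $G(S)$ form an open dense subset, its complement being the finite union of the proper subspaces $\Fix(H) \cap \Fix(G(S))$ over subgroups $H \supsetneq G(S)$; the lift $\widetilde S$ is a $\Gamma_y$-invariant union of components of this dense set accumulating at $\widetilde y$. Combining $\widetilde y \in \overline{\widetilde S}$ with the $\Gamma_y$-invariance of $\widetilde S$ and the cone structure of the linear action about the fixed subspace $\Fix(\Gamma_y)$, every point of $\Fix(\Gamma_y)$ near $\widetilde y$ should be a limit of points of $\widetilde S$. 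The hard part is precisely this last density step: one must rule out that $\widetilde S$ accumulates on only part of $\Fix(\Gamma_y)$ (as can happen for a single component when $\dim \Fix(G(S)) = 1$), and this is where the invariance of $\widetilde S$ under $\Gamma_y$ together with the connectedness of $S_x$ (Remark \ref{Stratum is connected}) must be invoked carefully.
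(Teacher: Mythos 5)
Your overall strategy is the same as the paper's: show that $S_x \cap \overline{S}$ is nonempty, relatively closed and relatively open in $S_x$, and invoke connectedness of strata (Remark \ref{Stratum is connected}). The difference lies in how openness is justified, and here your write-up is more explicit than the paper's but stops short of a complete argument. The paper disposes of openness in a single line: for $y \in S_x \cap \overline{S}$ and $z \in U_y$ it notes $\Gamma_z \hookrightarrow \Gamma_y$ and concludes $U_y \cap S_x \subseteq \overline{S_z}$; taking $z \in S \cap U_y$, this is exactly the density statement you isolate, namely that $S_x \cap U_y = \varphi_y(\Fix(\Gamma_y)\cap\widetilde{U_y})$ is contained in the closure of $S \cap U_y$. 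So the step you flag as ``the hard part'' is precisely the step the paper asserts without proof. Your preliminary reductions, the identification of $S_x \cap U_y$ with the image of $\Fix(\Gamma_y)$, the subgroup inclusion $G(S) \leq \Gamma_y$ coming from Lemma \ref{G(S) hookrightarrow Gamma_x for x in closure(S)-S}, and the resulting inclusion $\Fix(\Gamma_y) \subseteq \Fix(G(S))$ are all correct and are only implicit in the paper.

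The step you leave open can be closed, and the scenario you worry about ($\widetilde{S}$ accumulating on only part of $\Fix(\Gamma_y)$) cannot occur. In the linear model write $\RR^n = \Fix(\Gamma_y) \oplus V$ orthogonally; since $\Gamma_y$ fixes $\Fix(\Gamma_y)$ pointwise it preserves $V$, and the isotropy of a point $(u,v)$ equals the isotropy of $v$ in $V$. Hence the locus of isotropy exactly $H := G(S)$ is $\Fix(\Gamma_y) \times X_H$ with $X_H \subseteq V$ invariant under positive scaling, and each connected component $D$ of $X_H$ is itself scaling-invariant (the ray $t \mapsto tv$ stays inside $X_H$ and inside the component of $v$), so $0 \in \overline{D}$ and $\overline{\Fix(\Gamma_y) \times D} \supseteq \Fix(\Gamma_y) \times \{0\}$. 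Since $S \cap U_y$ is a union of such components (points in one component are joined by a path along which all local groups are isomorphic, hence lie in one stratum) and at least one of them meets every ball around $\widetilde{y}$ because $y \in \overline{S}$, a full neighborhood of $\widetilde{y}$ in $\Fix(\Gamma_y)$ lies in $\overline{\widetilde{S}}$. No appeal to $\Gamma_y$-invariance of $\widetilde{S}$ or to global connectedness of $S_x$ is needed for this local step; connectedness is only used afterwards, as in the paper, to conclude $S_x \cap \overline{S} = S_x$. With that paragraph supplied your proof is complete and at least as careful as the one in the paper.
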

     \begin{proof}
         Note that $x \in S_{x} \cap \overline{S}$ so $S_{x} \cap \overline{S} \neq \emptyset$. We will prove that $S_{x} \cap \overline{S}$ is both open and closed in $S_{x}$, then by connectedness of $S_{x}$ we will have that $S_{x} \cap \overline{S} = S_{x}$, or in others words $S_{x} \subseteq \overline{S}$.
         
         Since $\overline{S}$ is closed in $\OO$ we have that $S_{x} \cap \overline{S}$ is closed in $S_{x}$. Now let $U_{y} \subseteq \OO$ be a fundamental chart of $y \in S_{x} \cap \overline{S}$. Then $U_{y} \cap S_{x}$ is an neighborhood of $y$ which is open in $S_{x}$. Additionally, for all $z \in U_{y}$ we have that $\Gamma_{z} \hookrightarrow \Gamma_{y} \cong \Gamma_{x}$ so $U_{y} \cap S_{x} = U_{y} \cap S_{y} \subseteq U_{y} \cap \overline{S_{z}} $. Hence $U_{y} \cap S_{x} \subseteq S_{x} \cap \overline{S}$ and we conclude that 
         $$S_{x} \subseteq \overline{S}$$
     \end{proof}

    \begin{prop}\label{closure(strat) - strat  = union of finite strats}
        Let $S \in Strat(\OO)$, if $\overline{S} \backslash S \neq \emptyset$ then we have 
        $$\overline{S} \backslash S = \bigcup_{i=1}^{k}S_{i}$$
        for $k \in \NN$ and for some $S_{i} \in Strat(\OO)$ such that $G(S) < G(S_{i})$ for $i=1, \ldots , k$.
    \end{prop}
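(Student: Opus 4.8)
The plan is to show that $\overline{S}\backslash S$ is a disjoint union of strata, each carrying a local group strictly larger than $G(S)$, and then to extract finiteness from the compactness established in Proposition \ref{closure(S) - S is compact}.

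First I would observe that every point of $\overline{S}\backslash S$ lies in a well-defined stratum that is entirely contained in $\overline{S}\backslash S$. Indeed, fix $x\in\overline{S}\backslash S$ and let $S_{x}$ be the stratum containing $x$. By Proposition \ref{if x in closure(S) then S_x in closure S} we have $S_{x}\subseteq\overline{S}$, and since strata are pairwise disjoint (Remark \ref{strata is disjoint}) and $x\notin S$, necessarily $S_{x}\neq S$, whence $S_{x}\cap S=\emptyset$ and so $S_{x}\subseteq\overline{S}\backslash S$. Consequently
$$\overline{S}\backslash S=\bigcup_{x\in\overline{S}\backslash S}S_{x},$$
a union of strata. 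For the group condition, take any such $S_{x}$ and any $y\in S_{x}$; then $y\in\overline{S}\backslash S$, so Lemma \ref{G(S) hookrightarrow Gamma_x for x in closure(S)-S} gives an injection $G(S)\hookrightarrow\Gamma_{y}\cong G(S_{x})$ that is not surjective, i.e. $G(S)<G(S_{x})$. This settles both the decomposition into strata and the inequality $G(S)<G(S_{i})$.

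It remains to prove that only finitely many distinct strata appear in this union, which is where I expect the real work. The idea is to combine compactness of $\overline{S}\backslash S$ with local finiteness of the stratification. By Proposition \ref{every smooth orbifold is locally homeomorphic to R^n/Gamma} each point has a fundamental chart $\widetilde{U}/\Gamma$ with $\Gamma<O(n)$ finite; inside such a chart the points are distributed among orbit types indexed by conjugacy classes of subgroups of $\Gamma$, and since $\Gamma$ is finite there are only finitely many of these. Hence each fundamental chart meets only finitely many strata of $\OO$. Covering the compact set $\overline{S}\backslash S$ by finitely many fundamental charts then shows that $\overline{S}\backslash S$ meets only finitely many strata; together with the previous paragraph (each such stratum lies entirely in $\overline{S}\backslash S$) this yields
$$\overline{S}\backslash S=\bigcup_{i=1}^{k}S_{i},\qquad G(S)<G(S_{i}),$$
as desired.

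The main obstacle is the local finiteness step: one must argue cleanly that within a single fundamental chart only finitely many global strata occur. This reduces to the statement that a finite group acting orthogonally on a ball has finitely many orbit types, so that the local partition into isotropy types is finite and each global stratum meeting the chart contains at least one such local type. Care is needed to match the abstract local relation $\overset{l}{\sim}$ defining the strata with the orbit-type partition of the chart, but since having isomorphic local groups inside a single chart is exactly what $\overset{l}{\sim}$ records, the number of strata meeting the chart is bounded by the number of subgroups of $\Gamma$, which is finite.
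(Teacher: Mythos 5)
Your proof is correct, and it takes a genuinely different route from the paper's in the crucial finiteness step. Both arguments get the group condition from Lemma \ref{G(S) hookrightarrow Gamma_x for x in closure(S)-S}, but they diverge afterwards: the paper covers $\overline{S} \backslash S$ by the \emph{closures} $\overline{S^{'}_{x}}$ of the strata through its points, extracts a finite subfamily by compactness, and then needs an iterated decomposition argument (together with Corollary \ref{minimal strata are closed sets} and Proposition \ref{every smooth orbifold is locally homeomorphic to R^n/Gamma}) to convert a finite union of closures of strata back into a finite union of strata, invoking Proposition \ref{if x in closure(S) then S_x in closure S} only at the very end to upgrade inclusion to equality. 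You instead apply Proposition \ref{if x in closure(S) then S_x in closure S} and Remark \ref{strata is disjoint} at the outset, so $\overline{S} \backslash S$ is a union of strata from the start, and you obtain finiteness from local finiteness of the stratification plus compactness (Proposition \ref{closure(S) - S is compact}). This buys two things: no closures of strata ever appear, and your compactness step is applied to a genuine open cover by charts, whereas the paper extracts a finite subfamily from a cover by closures, which are not open sets, so the standard finite-subcover argument does not literally apply there. Two points in your local-finiteness step need care if written out. First, the right local bound is not the number of subgroups of $\Gamma$ but the number of connected components of the orbit-type sets of the chart: a single isotropy type can contribute several components lying in different global strata, so your stated bound can fail; however, for an orthogonal action on a ball each orbit-type set is a linear subspace minus finitely many proper subspaces, hence has finitely many components, and finiteness, which is all you need, survives. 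Second, you must verify that each such component is contained in a single global stratum, i.e.\ that its points are $\sim$-related; this follows by covering a path inside the component with fundamental charts and chaining local relations, but it is not automatic from having isomorphic local groups inside one chart, since $\overset{l}{\sim}$ requires the chart-induced injections to be isomorphisms rather than mere abstract isomorphy of the local groups.
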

    \begin{proof}
         Take $x \in \overline{S} \backslash S$, by Lemma \ref{G(S) hookrightarrow Gamma_x for x in closure(S)-S} $x$ is contained in a stratum, namely $S^{'}_{x}$, such that $G(S) < G(S^{'}_{x}) \cong \Gamma_{x}$. Hence we have the following:

         $$\overline{S} = S \cup \overline{S} \backslash S \subseteq S \cup \left( \bigcup_{x \in \overline{S} \backslash S} S^{'}_{x} \right) \subseteq S \cup \left( \bigcup_{x \in \overline{S} \backslash S} \overline{S^{'}_{x}} \right)$$

         By Proposition \ref{closure(S) - S is compact} $\overline{S} \backslash S$ is compact, therefore there exists $k \in \NN$ such that

         $$\overline{S} \subseteq S \cup \left( \bigcup_{i = 1}^{k} \overline{S^{'}_{x_{i}}} \right)$$

         Notice that we can repeat the same process of decomposition done with $S$ but now with each $S_{i}$. Since every smooth orbifold is locally homeomorphic to a quotient of an euclidian space by a finite orthogonal subgroup (Proposition \ref{every smooth orbifold is locally homeomorphic to R^n/Gamma}) this process of iterative decomposition is finite. Therefore, we have that $\bigcup_{i=1}^{n}S^{'}_{x_{i}}$ is a union of strata and a union of closures of minimal strata and hence, by Corollary \ref{minimal strata are closed sets}, we conclude that is a union of strata, that is

         $$\overline{S} \subseteq S \cup \left(\bigcup_{i=1}^{n} S_{i} \right)$$

         and by Proposition \ref{if x in closure(S) then S_x in closure S} we have the equality, that is

         $$\overline{S} = S \cup \left(\bigcup_{i=1}^{n} S_{i} \right)$$

         where $S_{i} \in Strat(\OO)$ and $G(S) < G(S_{i})$ for $i=1, \ldots , n$.

         Moreover, we have that $S \cap \left( \bigcup_{i=1}^{k} \overline{S_{i}}\right) = \emptyset$, so we conclude that:
         $$\overline{S} \backslash S = \bigcup_{i=1}^{n} S_{i}$$
    \end{proof}
    
    By Proposition \ref{closure(strat) - strat  = union of finite strats} we have that the set $cch(\OO)$ is finite (see Definition \ref{defi chain and minimal stratum}). As a corollary of the same proposition we obtain a disjoint decomposition of the closure of a stratum.

    \begin{coro}\label{closure strat}
        Let $S \in Strat(\OO)$. Then exists  $k \in \NN$ and $S_{i} \in Strat(\OO)$ satisfying $G(S) < G(S_{i})$ for $i=1, \ldots , k$ such that 
        $$\left\{S, \bigcup_{i=1}^{k}S_{i}\right\}$$
        
        is a disjoint decomposition of $\overline{S}$.
    \end{coro}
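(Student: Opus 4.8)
The plan is to read this statement essentially verbatim off Proposition \ref{closure(strat) - strat  = union of finite strats}, which already carries all the nontrivial content; the corollary is just a repackaging of that result into the language of a disjoint decomposition of $\overline{S}$. First I would dispose of the degenerate case: if $\overline{S} \backslash S = \emptyset$, then $S = \overline{S}$ is closed (as happens for minimal strata by Corollary \ref{minimal strata are closed sets}), and there is nothing to decompose. So for a nontrivial decomposition I may assume $\overline{S} \backslash S \neq \emptyset$ and invoke Proposition \ref{closure(strat) - strat  = union of finite strats} to obtain $k \in \NN$ and strata $S_{1}, \ldots, S_{k}$ with $G(S) < G(S_{i})$ such that $\overline{S} \backslash S = \bigcup_{i=1}^{k} S_{i}$.

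From here the two defining properties of a decomposition are immediate. For the covering property, elementary set theory gives $\overline{S} = S \cup (\overline{S} \backslash S)$, and substituting the identity furnished by the proposition yields $\overline{S} = S \cup \bigcup_{i=1}^{k} S_{i}$. For disjointness I would argue that $S$ meets none of the $S_{i}$: since $G(S) < G(S_{i})$ forces $G(S_{i}) \ncong G(S)$, each $S_{i}$ is a stratum genuinely distinct from $S$, so by Remark \ref{strata is disjoint} we have $S \cap S_{i} = \emptyset$ for every $i$. Equivalently, and even more directly, $S \cap \bigcup_{i=1}^{k} S_{i} = S \cap (\overline{S} \backslash S) = \emptyset$ by the very definition of set difference. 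As both pieces are then nonempty, neither is contained in the other, so $\{S, \bigcup_{i=1}^{k} S_{i}\}$ is indeed a disjoint decomposition of $\overline{S}$.

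The only point worth checking with any care — and the closest thing to an obstacle — is that the second piece $\bigcup_{i=1}^{k} S_{i}$ is a closed subset, as a genuine piece of a decomposition should be; but this is precisely the content of Proposition \ref{closure(S) - S is compact}, which establishes that $\overline{S} \backslash S$ is compact, hence closed. Thus no further work is required, and the corollary follows at once. I would remark in passing that, being disjoint, this decomposition is automatically neat by Remark \ref{neat decomposition iff O1 cap O2 = partial O1 cap partial O2}, which is exactly the form in which it will later be fed into Lemma \ref{lema importante}.
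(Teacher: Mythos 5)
Your proposal is correct and follows essentially the same route as the paper: both invoke Proposition \ref{closure(strat) - strat  = union of finite strats} together with the identity $\overline{S} = S \cup (\overline{S} \backslash S)$ to get the covering, and both use Remark \ref{strata is disjoint} (equivalently, the definition of set difference) for disjointness. Your extra observations --- the degenerate case, closedness of $\overline{S}\backslash S$ via Proposition \ref{closure(S) - S is compact}, and neatness of the disjoint decomposition --- are sound additions but not needed beyond what the paper records.
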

    \begin{proof}
        From $\overline{S} = S \cup \overline{S} \backslash S$ and proposition \ref{closure(strat) - strat  = union of finite strats} we have that exists  $k \in \NN$ and $S_{i} \in Strat(\OO)$ satisfying $G(S) < G(S_{i})$ for $i=1, \ldots , k$ such that 
        $$\left\{S, \bigcup_{i=1}^{k}S_{i}\right\}$$
        is a decomposition of $\overline{S}$. Moreover $S \cap \left(\bigcup_{i=1}^{k}S_{i} \right) = \emptyset$ by Remark \ref{strata is disjoint} so the decomposition is disjoint.
    \end{proof}

    As a corollary of Proposition \ref{closure(strat) - strat  = union of finite strats} we get that minimal strata are closed sets, and hence compact. This fact motivates the following lemma.

    \begin{lema}\label{Minimal strat is manifold}
        Let $\OO$ be an orbifold, $S \in Strat(\OO)$ a minimal stratum and $\widetilde{S}$ a lift of $S$. Then $S$ and $\widetilde{S}$ are manifolds and $S \cong \widetilde{S}$.
    \end{lema}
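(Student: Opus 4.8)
The plan is to show that a minimal stratum looks, in every fundamental chart, exactly like the fixed-point set of its local group acting orthogonally, and that this fixed-point set is a linear subspace on which the group acts trivially. First I would fix a point $x\in S$ and invoke Proposition \ref{every smooth orbifold is locally homeomorphic to R^n/Gamma} to choose a fundamental chart $(\widetilde{U},\Gamma,\varphi)$ around $x$ in which $\Gamma=\Gamma_x\cong G(S)$ is realised as a finite subgroup of $O(n)$ acting linearly, with $\widetilde{U}$ a $\Gamma$-invariant ball centred at the lift $\widetilde{x}=0$. Then $V:=\Fix(\Gamma)$ is a linear subspace of $\RR^{n}$, so $V\cap\widetilde{U}$ is an open disc, in particular a connected manifold; this is the candidate local model for $\widetilde{S}$.

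The key step is to identify $S$ with the image of $V$ inside this chart. On one hand, every $\widetilde{y}\in V\cap\widetilde{U}$ is fixed by all of $\Gamma$, so its isotropy group is the whole of $\Gamma$ and hence $\Gamma_{y}\cong G(S)$; using the single chart $\varphi(\widetilde{U})$ as a common fundamental chart for $x$ and $y$ with identity identifications, one checks directly that $x\overset{l}{\sim}y$, so $\varphi(V\cap\widetilde{U})\subseteq S$. On the other hand, any $\widetilde{z}\in\widetilde{U}\setminus V$ is moved by some element of $\Gamma$, so $\abs{\Gamma_{z}}<\abs{\Gamma}$ and thus $\Gamma_{z}\not\cong G(S)$, whence $z\notin S$. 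Therefore $S\cap\varphi(\widetilde{U})=\varphi(V\cap\widetilde{U})$. Since $\Gamma$ fixes $V$ pointwise, the quotient map is injective on $V\cap\widetilde{U}$ and restricts to a homeomorphism onto $S\cap\varphi(\widetilde{U})$; this simultaneously exhibits a Euclidean chart for $S$ and the local identification $\widetilde{S}\cong S$.

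Finally I would globalise. The local charts just produced make $S$ locally Euclidean, and $S$ inherits the Hausdorff and second-countable properties from $\OO$; the local dimension $\dim V$ is locally constant and $S$ is connected by Remark \ref{Stratum is connected}, so it is constant and $S$ is a manifold of that dimension (indeed a closed one, since $S$ is compact by Corollary \ref{minimal strata are closed sets}). Because $\Gamma$ acts trivially on each fixed-point piece, the transition maps restricted to the $V\cap\widetilde{U}$ carry no group monodromy, so the pieces glue to a manifold $\widetilde{S}$ and the local homeomorphisms assemble into a global homeomorphism $\widetilde{S}\cong S$.

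The main obstacle I expect is the sharp local identification $S\cap\varphi(\widetilde{U})=\varphi(\Fix(\Gamma)\cap\widetilde{U})$: one must argue carefully that membership in $S$ is detected exactly by carrying the full local group $G(S)$, ruling out both points with strictly smaller isotropy off the fixed set and spurious points of a different stratum with an abstractly isomorphic group. Everything else — constancy of the dimension and the triviality of the action on the fixed set, which kills monodromy and yields $\widetilde{S}\cong S$ — is then routine.
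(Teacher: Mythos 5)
Your proof is correct and takes essentially the same route as the paper: identify $S$ inside each fundamental chart with the fixed-point set $\Fix(\Gamma_{x})$ of the orthogonal local action (a linear subspace on which $\Gamma_{x}$ acts trivially, so that $\Fix(\Gamma_{x})/\Gamma_{x} \cong \Fix(\Gamma_{x})$), and then glue these local pieces. The only cosmetic difference is that the paper first uses compactness of the minimal stratum to pass to a finite cover of such charts, whereas you argue chart by chart and supply the two-inclusion identification $S \cap U_{x} = \varphi\bigl(\Fix(\Gamma_{x}) \cap \widetilde{U_{x}}\bigr)$ and the constancy of dimension explicitly — details the paper asserts without proof.
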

    \begin{proof}
        Take a local chart $(U_{x}, \widetilde{U_{x}}, \Gamma_{x}, \varphi_{x})$ for each $x \in S$. Because $S$ is compact there exists a finite number of tubular neighborhoods that covers $S$. Let $\{U_{x_i}\}$ for $i=1, \ldots , n$ be a finite cover of $S$. We have that:

        $$S \cap U_{x_i} = \quot{\Fix_{\Gamma_{x_i}}(U_{x_i})}{\Gamma_{x_i}}$$

        Hence we conclude that:
        $$S = \bigcup_{i=1}^{n} S \cap U_{x_i} = \bigcup_{i=1}^{n} \quot{\Fix_{\Gamma_{x_i}}(U_{x_i})}{\Gamma_{x_i}} \hspace{20 pt} \widetilde{S} = \bigcup_{i=1}^{n} \widetilde{S \cap U_{x_i}} = \bigcup_{i = 1}^{n} \Fix_{\Gamma_{x_{i}}}(U_{x_{i}})$$

        We conclude then that $S$ and $\widetilde{S}$ are manifolds since the intersection of charts is a chart with local group isomorphic to the local groups of the charts that are intersecting. Moreover, since
        $$ \quot{\Fix_{\Gamma_{x_i}}(U_{x_i})}{\Gamma_{x_i}} \cong \Fix_{\Gamma_{x_{i}}}(U_{x_{i}})$$
        we conclude that $S \cong \widetilde{S}$.
    \end{proof}

    Now we will focus on the problem of extracting a neighbourhood of a minimal strata.

\begin{teor}\label{tubular neighborhood theorem for minimal strats}
Let $\OO$ be an orbifold and $S$ a minimal stratum. There exists a neighborhood of $S$ inside $\OO$ which is a smooth very good orbifold.
\end{teor}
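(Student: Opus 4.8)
The plan is to identify a metric neighbourhood of $S$ with the quotient of the total space of a normal disc bundle by a finite group, so that it becomes manifestly a very good orbifold. First I would record the local model along $S$. By Corollary \ref{minimal strata are closed sets} the stratum $S$ is compact, and by Lemma \ref{Minimal strat is manifold} both $S$ and a lift $\widetilde{S}$ are $d$-manifolds with $S \cong \widetilde{S}$, where $d = \dim S$. Fix a Riemannian metric on $\OO$ (Proposition \ref{smooth orbifold admits Riemmaniann metric}); in a fundamental chart $(U_x, \widetilde{U}_x, \Gamma, \varphi_x)$ around $x \in S$ it lifts to a $\Gamma$-invariant metric on $\widetilde{U}_x \subseteq \RR^n$, with $\Gamma = G(S) < O(n)$ by Proposition \ref{every smooth orbifold is locally homeomorphic to R^n/Gamma}. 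Since $\widetilde{S} \cap \widetilde{U}_x = \Fix(\Gamma)$ is the fixed locus, the orthogonal complement yields a $\Gamma$-invariant splitting $\RR^n \cong \RR^d \times V$ in which $\Gamma$ acts trivially on $\RR^d = T\widetilde{S}$ and orthogonally on the normal factor $V$ with $\Fix(\Gamma|_V)=\{0\}$; thus $\OO$ is locally modelled on $\RR^d \times (V/\Gamma)$ near $x$.

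Next I would globalise this picture. Using compactness of $S$, cover it by finitely many such fundamental charts and, after shrinking, assume the lifts are of the above product form and the transitions $\widetilde{\varphi}_{ij}$ are $\Gamma$-equivariant embeddings preserving the splitting. Differentiating the transitions along $\widetilde{S}$ produces the cocycle of a normal vector bundle $\nu \to \widetilde{S}$ whose fibres carry the orthogonal $\Gamma$-representation $V$; since $\widetilde{S}$ is a manifold, the total space $E$ of the associated disc bundle $D(\nu)$ is an honest smooth manifold. For a uniform $\varepsilon > 0$ extracted from the compactness of $S$, the normal exponential map $\exp^{\perp}\colon D(\nu) \to \OO$ is a diffeomorphism onto the metric neighbourhood $N := N_\varepsilon(S) = \{x : d(x,S) < \varepsilon\}$ in the sense of the tubular neighbourhood theorem applied upstairs (Theorem \ref{neat submanifold has tubular neighborhood}), and it descends to a surjection $\rho\colon E \to N$ that is $|\Gamma|$-to-one over regular points and restricts to the diffeomorphism $\widetilde{S} \cong S$ along the zero section. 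This exhibits $\rho\colon E \to N$ as a finite-sheeted orbifold covering of $N$ by the manifold $E$, so $N$ is good; since it is a finite cover, passing to its regular (Galois) closure gives a manifold carrying a finite group $G$ with $N = (\,\cdot\,)/G$, so $N$ is in fact a very good orbifold, as claimed. By Remark \ref{point inside fundamental chart implies injection of local groups} the points of $N$ meet no stratum deeper than $S$ — indeed there is none, by minimality — so the single local model $\RR^d \times (V/\Gamma)$ governs the whole neighbourhood.

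The main obstacle is the globalisation step, namely assembling the fibrewise $\Gamma$-actions into a single finite group action on $E$. The transitions $\widetilde{\varphi}_{ij}$ are equivariant only with respect to the isomorphisms $f_{ij}\colon \Gamma \to \Gamma$, which a priori may be nontrivial automorphisms, so the naive fibrewise $\Gamma$-action need not glue to a global one: a specific element $\gamma$ read in one chart is twisted to $f_{ij}(\gamma)$ in the next. This is precisely the good-versus-very-good subtlety. It is resolved by minimality of $S$: because $G(S)$ is the maximal local group encountered along $S$ and $\widetilde{S}$ is exactly $\Fix(\Gamma)$, the structure cocycle of $\nu$ lies in the normaliser $N_{O(V)}(\Gamma)$, whose twisting is measured by the finite group $N_{O(V)}(\Gamma)/C_{O(V)}(\Gamma) \le \mathrm{Aut}(\Gamma)$; hence the group generated by the local actions is a finite extension of $\Gamma$, and whether or not the bare $\Gamma$-action globalises, $N$ is a quotient of a manifold by a finite group. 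The remaining care is quantitative: the uniform choice of $\varepsilon$ must guarantee both that $\exp^{\perp}$ is an embedding off the zero section and that $N$ is disjoint from the compact sets $\overline{S'}\setminus S'$ of the other strata, which again follows from compactness of $S$ together with minimality.
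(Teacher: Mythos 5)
Your argument is correct and runs on the same engine as the paper's proof: compactness and closedness of the minimal stratum (Corollary \ref{minimal strata are closed sets}), the identification of the lift of $S\cap U_{x_i}$ with $\Fix_{\Gamma_{x_i}}(\widetilde{U_{x_i}})$ as in Lemma \ref{Minimal strat is manifold}, and the assembly of the lifted tubular neighbourhoods into a single manifold that finitely covers a metric neighbourhood of $S$. The genuine difference is the final step. The paper glues the charts $\widetilde{U_{x_i}}$ along overlaps and then defines one global $\Gamma_{x_1}$-action on the glued manifold by transporting group elements through the isomorphisms $f_{x_j}\circ f_{x_i}^{-1}$ --- exactly the step you flag as delicate, since these isomorphisms are canonical only up to conjugation and need not satisfy the cocycle condition on triple overlaps. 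Your fallback is cleaner: the disc-bundle total space $E$ is a manifold, $\rho\colon E\to N_{\varepsilon}(S)$ is a finite-sheeted orbifold covering, hence $N_{\varepsilon}(S)$ is good, and its regular (Galois) closure is again a manifold because an orbifold cover of a manifold has trivial local groups; this yields ``very good'' without ever producing an explicit deck group, at the cost of losing $G(S)$ as the group in the quotient presentation. Be aware that your normaliser discussion ($N_{O(V)}(\Gamma)/C_{O(V)}(\Gamma)$ finite, so ``the group generated by the local actions is a finite extension of $\Gamma$'') is motivation rather than proof --- no global action has been constructed at that point --- so the Galois-closure sentence is the load-bearing one and should be stated as such. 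Finally, add the boundary case $S\cap\partial\OO\neq\emptyset$, which the paper treats separately: there the model charts live in $\RR_{+}^{n}$, and one needs Lemma \ref{Fix is neat submanifold} to see that the fixed loci are neat so that Theorem \ref{neat submanifold has tubular neighborhood} still provides the tubular neighbourhoods.
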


\begin{proof}

Let $S$ be a minimal stratum. Assume first that $S \cap \partial \OO \neq \emptyset$. We claim that $\widetilde{S}$ has a finite open cover of tubular neighborhoods.

For each $x \in S$ take a fundamental chart $(U_{x}, \widetilde{U_{x}}, \Gamma_{x}, \varphi_{x})$ around $x$. Therefore $\bigcup_{x \in S} U_{x}$ is an open cover of $S$. Because $S$ is compact there exists $\{x_{i}\} \subseteq S$ such that 

$$S \subseteq \bigcup_{i=1}^{n}U_{x_{i}}$$

where $x_{i} \in S \cap U_{x_{i}}$ for $i=1, \ldots , n$. 

Let $\widetilde{S \cap U_{x_{i}}}$ be a lift of $S \cap U_{x_{i}}$. By construction we have that:

$$\widetilde{S \cap U_{x_{i}}} = \Fix_{\Gamma_{x_{i}}}(U_{i})$$

Hence $\widetilde{S \cap U_{x_{i}}}$ is a linear variety, and consequently a submanifold of $\widetilde{U_{x_{i}}}$, as in the proof of Lemma \ref{Minimal strat is manifold}.

For each $\widetilde{S  \cap U_{x_{i}}}$ there exists a tubular neighborhood inside $\widetilde{U_{x_{i}}}$. Therefore, we can assume without loss of generality that the open sets $\widetilde{U_{x_{i}}}$ are tubular neighborhoods.

We have the following diagram:

\begin{center}
\begin{tikzcd}
\widetilde{U_{x_{i}}} \arrow[dd, "\pi_{\Gamma_{i}}", two heads]                       &  & \widetilde{U_{x_{i}} \cap U_{x_{j}}} \arrow[dd, "\pi_{\Gamma_i}", two heads] \arrow[rr, "\psi_{j}", hook] \arrow[ll, "\psi_i"', hook']                &  & \widetilde{U_{x_{j}}} \arrow[dd, "\pi_{\Gamma_{j}}", two heads, hook]                 \\
                                                                                      &  &                                                                                                                                                       &  &                                                                                       \\
\quot{\widetilde{U_{x_{i}}}}{\Gamma_{x_i}} \arrow[dd, "\varphi_{i}", two heads, hook] &  & {\quot{\widetilde{U_{x_{i}} \cap U_{x_{j}}}}{\Gamma_{x_{i},x_{j}}}} \arrow[dd, "{\varphi_{i,j}}", two heads, hook] \arrow[ll, hook'] \arrow[rr, hook] &  & \quot{\widetilde{U_{x_{j}}}}{\Gamma_{x_j}} \arrow[dd, "\varphi_{j}", two heads, hook] \\
                                                                                      &  &                                                                                                                                                       &  &                                                                                       \\
U_{x_{i}}                                                                             &  & U_{x_{i}} \cap U_{x_{j}} \arrow[rr, hook] \arrow[ll, hook']                                                                                           &  & U_{x_{j}}                                                                            
\end{tikzcd}
\end{center}

Note that the sets $\widetilde{U_{x_{i}}}$ are manifolds. We can join them to form a manifold that covers $\widetilde{S}$ in a way that will descend to a smooth very good orbifold structure that covers $S$. To this extent we define

$$U:= \bigsqcup_{i=1}^{n} \quot{\widetilde{U_{x_{i}}}}{\sim}$$

where the relation $\sim$ is defined as follows: if $U_{x_{i}} \cap U_{x_{j}} \neq \emptyset$ then $\forall x \in \widetilde{U_{x_{i}} \cap U_{x_{j}}}$ $\psi_{i}(x) \sim \psi_{j}(x)$. By construction $U$ is a manifold without boundary. Moreover, choosing $i \in \{1, \ldots , n\}$ it can be defined a $\Gamma_{x_{i}}$-action on $U$ as follows. First note that every local chart $U_{j}$ we have isomorphisms

\begin{center}
            \begin{tikzcd}
\Gamma_{x_{i}} &  & {\Gamma_{x_{i},x_{j}}} \arrow[rr, "f_{x_{j}}", hook] \arrow[ll, "f_{x_{i}}"', hook'] &  & \Gamma_{x_{j}}
\end{tikzcd}
\end{center}

Now, for every $x \in U$ there exists a local chart $U_{x_{j}}$ such that $x \in U_{x_{j}}$, in this situation we define a $\Gamma_{x_{i}}$ action on $U$ by
$$\gamma \cdot x := (f_{x_{j}} \circ f^{-1}_{x_{i}})(\gamma) \cdot x$$

Because $f_{x_{i}}$ and $f_{x_{j}}$ are isomorphisms the action is well defined. Hence by proposition \ref{M/Gamma is orbifold} we conclude that $\quot{U}{\Gamma_{x_{i}}}$ is a smooth very good orbifold that covers $S$ inside $\OO$ and because the map $\pi_{\Gamma_{x_{1}}} : U \longrightarrow \quot{U}{\Gamma_{x_{1}}}$ is open we have that $\quot{U}{\Gamma_{x_{1}}}$ is topologically open.

If $S \cap \partial \OO \neq \emptyset$ the same proof applies for local charts in $\RR_{+}^{n}$ instead of in $\RR^{n}$.

\end{proof}

\begin{lema}\label{Fix is neat submanifold}
    Let $M$ be a $n$-manifold with boundary and $\Gamma$ a finite group acting locally orthogonal on $M$. Then $\Fix_{\Gamma}(M)$ is a neat submanifold of $M$.
\end{lema}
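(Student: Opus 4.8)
The plan is to verify the two defining properties of a neat submanifold locally, since both \enquote{being a submanifold} and the neatness condition $\partial(\Fix_{\Gamma}(M)) = \Fix_{\Gamma}(M) \cap \partial M$ of Definition \ref{neat suborbifold} are local in nature, and the equivariance of the transition maps guarantees that the local descriptions assemble into a globally well-defined submanifold. First I would fix a point $x \in \Fix_{\Gamma}(M)$ and use the hypothesis that $\Gamma$ acts locally orthogonally (together with Proposition \ref{every smooth orbifold is locally homeomorphic to R^n/Gamma}) to obtain a chart $\phi \colon \widetilde{U} \to U$ around $x$, with $\widetilde{U} \subseteq \RR^{n}$ and $\phi(0) = x$, in which $\Gamma$ acts by an orthogonal representation $\rho \colon \Gamma \to O(n)$. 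Since $x$ is fixed by all of $\Gamma$, its stabiliser is the entire group, so this representation is defined on the whole of $\Gamma$.

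The first key step is to identify the fixed set in the linear model. For an orthogonal representation one has
$$\Fix_{\Gamma}(\RR^{n}) = \bigcap_{g \in \Gamma} \ker\bigl(\rho(g) - \mathrm{Id}\bigr),$$
which is a linear subspace $V \subseteq \RR^{n}$, say of dimension $d$. Consequently $\Fix_{\Gamma}(\widetilde{U}) = V \cap \widetilde{U}$, so near $x$ the set $\Fix_{\Gamma}(M)$ is modelled on the linear inclusion $\RR^{d} \hookrightarrow \RR^{n}$; in particular it is a smooth $d$-dimensional submanifold.

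The delicate part is the behaviour at boundary points, which is precisely what the neatness condition controls. For $x \in \partial M$ the chart lives in $\RR_{+}^{n} = \RR^{n-1} \times [0,\infty)$ and each $g \in \Gamma$ is an orthogonal transformation preserving $\RR_{+}^{n}$. I would argue that such a $g$ must fix the inward normal $e_{n}$: an element of $O(n)$ preserving the half-space sends the bounding hyperplane to itself and its positive side to the positive side, which forces $\rho(g)e_{n} = e_{n}$ and $\rho(g)\bigl(\RR^{n-1} \times \{0\}\bigr) = \RR^{n-1} \times \{0\}$. Hence $\rho$ restricts to an orthogonal representation on $\RR^{n-1}$ and $V = W \times \RR e_{n}$, where $W = \Fix_{\Gamma}(\RR^{n-1})$. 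Intersecting with the half-space gives $\Fix_{\Gamma}(\RR_{+}^{n}) = W \times [0,\infty)$, which is exactly the standard neat model $\RR^{d-1} \times [0,\infty) \hookrightarrow \RR^{n-1} \times [0,\infty)$. Reading off the boundary, $\partial(\Fix_{\Gamma}(M))$ coincides locally with $W \times \{0\} = \Fix_{\Gamma}(M) \cap \partial M$, and the presence of the normal factor $\RR e_{n}$ shows that $\Fix_{\Gamma}(M)$ meets $\partial M$ transversally.

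I expect the main obstacle to be this boundary analysis: proving that every element of $\Gamma$ fixes the inward normal (so that the fixed set crosses the boundary transversally rather than lying tangent to it), and confirming that the local orthogonal charts may be chosen to respect the half-space structure. Once this is in place, collecting the local descriptions and using the equivariance of the transition functions to glue them shows that $\Fix_{\Gamma}(M)$ is a globally well-defined submanifold satisfying $\partial(\Fix_{\Gamma}(M)) = \Fix_{\Gamma}(M) \cap \partial M$, completing the proof.
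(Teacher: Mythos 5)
Your proof is correct and follows essentially the same route as the paper's: verify neatness locally in a chart around a boundary point, by showing that the fixed-point set splits as (fixed set in the bounding hyperplane) $\times$ (normal interval). The difference worth noting is that your argument is more complete than the paper's at the decisive step. The paper simply \emph{asserts} that a boundary chart can be taken of the form $W = W_{0} \times [0,l)$ with $\Fix_{\Gamma}(W) = \Fix_{\Gamma_{0}}(W_{0}) \times [0,l)$ for some $\Gamma_{0} \hookrightarrow O(n-1)$ compatible with $\Gamma$; that is, it takes for granted that the action is trivial in the direction normal to the boundary. You prove exactly this point: an orthogonal transformation preserving $\RR^{n-1} \times [0,\infty)$ must preserve the bounding hyperplane and its positive side, hence fixes the inward normal $e_{n}$, so the representation decomposes as $\rho_{0} \oplus 1$ with $\rho_{0} \colon \Gamma \to O(n-1)$, giving $\Fix_{\Gamma}\bigl(\RR_{+}^{n}\bigr) = \Fix_{\Gamma}\bigl(\RR^{n-1}\bigr) \times [0,\infty)$, which is the standard neat model. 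You also treat interior points explicitly (the fixed set of an orthogonal representation is a linear subspace, so $\Fix_{\Gamma}(M)$ is a submanifold at all), which the paper leaves implicit. In short, your write-up matches the paper's strategy while supplying the justification the paper omits.
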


\begin{proof}
    Take a local chart $W \subseteq M$ around a point in $\partial M$. We can assume that $W$ is of the form

    $$W = W_{0} \times [0,l)$$

    for $l \in \RR_{+}$ and $W_{0} \subseteq \RR^{n-1}$ an open subset. Then we have that
    
    $$\Fix_{\Gamma}(W) = \Fix_{\Gamma_{0}}(W_{0}) \times [0,l)$$
    
    where $\Gamma_{0}$ is a finite group such that $\Gamma_{0} \hookrightarrow O(n-1)$ and whose action on $W_{0}$ is compatible with the action of $\Gamma$ on $M$. Notice that since $W_{0}$ is an open subset of $\RR^{n-1}$, and hence boundariless, we have that
    $$\partial \Bigl(\Fix_{\Gamma_{0}}(W_{0})\Bigr) = \emptyset$$
    
    Furthermore, we have that

    $$\partial \Bigl(\Fix_{\Gamma}(W)\Bigr) = \partial \Bigl(\Fix_{\Gamma_{0}}(W_{0})\Bigr) \times [0,l) \cup \Fix_{\Gamma_{0}}(W_{0}) \times \partial [0,l) = \Fix_{\Gamma_{0}}(W_{0}) \times \{0\}$$

    hence 

    $$\partial \Bigl(\Fix_{\Gamma}(W)\Bigr) = \Fix_{\Gamma}(W) \cap \partial W$$

    We conclude that $\Fix_{\Gamma}(W)$ is neat and therefore $\Fix_{\Gamma}(M)$ is neat.
\end{proof}

\begin{lema}\label{S-epsilon is neat submanifold}
    Let $M$ be a $n$-manifold with boundary and $N$ a neat $m$-submanifold of $M$. Then the set defined as
    $$S_{\varepsilon}(N) : = \{x \in M \mid d(x,N) = \varepsilon\}$$
    is a neat submanifold of $M$ for a sufficiently small $\varepsilon \geq 0$.
\end{lema}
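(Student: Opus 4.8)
The plan is to identify $S_{\varepsilon}(N)$ with the radius-$\varepsilon$ sphere bundle of the normal bundle of $N$ and then verify the two requirements separately: that it is a submanifold, and that it meets $\partial M$ in the way Definition \ref{neat suborbifold} demands. First I would invoke Theorem \ref{neat submanifold has tubular neighborhood} to obtain a tubular neighborhood of $N$ in $M$. Because $N$ is neat, this neighborhood can be taken compatible with the boundary: the tubular neighborhood diffeomorphism $\phi \colon U \to M$, defined on a neighborhood $U$ of the zero section of the normal bundle $\nu(N)$, carries $\nu(N)|_{\partial N}$ into $\partial M$ and restricts to a tubular neighborhood of $\partial N$ in $\partial M$. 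Choosing (or adapting) the Riemannian metric giving rise to $d$ so that it is a product in a collar of $\partial M$, the induced distance satisfies $d(\phi(v),N)=|v|$ for all sufficiently small normal vectors $v$, so that for small $\varepsilon$ one has
$$S_{\varepsilon}(N) = \phi\bigl(\{\, v \in \nu(N) : |v| = \varepsilon \,\}\bigr),$$
the diffeomorphic image of the radius-$\varepsilon$ sphere subbundle $S_{\varepsilon}(\nu N)$.

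Next I would check that $S_{\varepsilon}(\nu N)$ is a neat submanifold of the total space $U$. The total space of $\nu(N)$ is a manifold with boundary $\nu(N)|_{\partial N}$, and the squared-norm function is a submersion away from the zero section; hence its level set $\{|v|=\varepsilon\}$ is a smooth hypersurface, and intersecting it with the boundary $\nu(N)|_{\partial N}$ produces exactly its own boundary. This is precisely the neatness condition $\partial S_{\varepsilon}(\nu N) = S_{\varepsilon}(\nu N) \cap \partial U$. Transporting this through the diffeomorphism $\phi$, and using that $\phi$ sends $\partial U$ into $\partial M$, yields that $S_{\varepsilon}(N)$ is a submanifold with $\partial S_{\varepsilon}(N) = S_{\varepsilon}(N) \cap \partial M$.

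To make this concrete, and mirroring the local computation of Lemma \ref{Fix is neat submanifold}, near a point of $\partial N$ I would take neat coordinates $W = W_{0} \times [0,l)$ on $M$ adapted to the collar, in which $N$ is cut out by $x_{m} = \dots = x_{n-1} = 0$ and $\partial M = \{x_{n}=0\}$. With the product metric the distance to $N$ is $\sqrt{x_{m}^{2} + \dots + x_{n-1}^{2}}$, so $S_{\varepsilon}(N)$ is the zero set of $x_{m}^{2} + \dots + x_{n-1}^{2} - \varepsilon^{2}$, a smooth hypersurface whose gradient is nonzero on $\{d = \varepsilon\}$, and whose intersection with $\{x_{n}=0\}$ is precisely its boundary. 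In the interior, where $x_{n} > 0$, the same computation with no boundary constraint shows directly that $S_{\varepsilon}(N)$ is an ordinary submanifold, so it only remains to patch the interior and collar descriptions.

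The main obstacle is the boundary behavior: one must ensure the tubular neighborhood, equivalently the normal exponential map, genuinely respects $\partial M$, so that the sphere bundle over $\partial N$ lands exactly in $\partial M$ and the sphere bundle over $\mathring{N}$ stays in $\mathring{M}$. This is where the neatness of $N$ is essential, since it forces the collar direction to be tangent to $N$ along $\partial N$ and hence the normal directions to $N$ to lie tangent to $\partial M$ there; and it is where choosing a metric that is a product near $\partial M$ upgrades the identification $d(\cdot,N)=|\cdot|$ and the property that boundary-tangent normal geodesics remain in $\partial M$ from a first-order statement to an exact one. I expect the transversality of $S_{\varepsilon}(N)$ to $\partial M$ for small $\varepsilon$ to be the delicate point that this metric choice is designed to guarantee.
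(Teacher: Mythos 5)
Your proposal is correct, and its "concrete" local verification is, nearly word for word, the paper's entire proof: the paper also works in adapted coordinates $W = W_{0} \times [0,l)$ in which neatness of $N$ makes both $N$ and (with a product-type metric) $S_{\varepsilon}(N)$ vertical in the collar direction, so that $\partial\bigl(W \cap S_{\varepsilon}(N)\bigr) = \bigl(W \cap S_{\varepsilon}(N)\bigr) \cap \partial W$, and then concludes neatness chart by chart. What you do differently is wrap this in the global tubular-neighborhood picture: invoking Theorem \ref{neat submanifold has tubular neighborhood} to identify $S_{\varepsilon}(N)$ with the radius-$\varepsilon$ sphere subbundle of the normal bundle, and proving smoothness by the submersion/regular-value argument for the norm function. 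That buys you something the paper's proof omits entirely: the paper only checks the boundary condition and never argues that $S_{\varepsilon}(N)$ is a smooth submanifold in the first place (it implicitly takes $S_{\varepsilon}(N_{0})$ to be a boundaryless manifold; indeed the corresponding line of the printed proof still refers to $\Fix_{\Gamma_{0}}(W_{0})$, a leftover from Lemma \ref{Fix is neat submanifold}). Your global argument also explains why one value of $\varepsilon$ works uniformly, via compactness through the tubular neighborhood. Finally, your last paragraph correctly isolates what the paper hides in its "without loss of generality": the equality $W \cap S_{\varepsilon}(N) = \bigl(W_{0} \cap S_{\varepsilon}(N_{0})\bigr) \times [0,r)$ is precisely the statement that the metric is a product near $\partial M$ and that distance to $N$ is measured horizontally, which is where neatness of $N$ enters. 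Both routes are sound; yours is more complete at the cost of more machinery, while the paper's is a bare local computation that leans on unstated smoothness and metric assumptions.
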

\begin{proof}
    We proceed as in Lemma \ref{Fix is neat submanifold}. Take a local chart $W \subseteq M$ such that $W \cap N \neq \emptyset$. Without loss of generality we can assume that

    $$W = W_{0} \times [0,l) \hspace{20pt}  W \cap N = (W_{0} \cap N_{0} ) \times [0,r)$$

    for $l,r \in \RR_{+}$, $W_{0} \subseteq \RR^{n-1}$ an open subset and $N_{0} \subseteq \RR^{m-1}$ an open subset. We have that
    
    $$W \cap S_{\varepsilon}(N) = \Bigl(W_{0} \cap S_{\varepsilon}(N_{0}) \Bigr)\times [0,r)$$
    
     Notice that since $W_{0}$ is an open set of $\RR^{n-1}$, and hence boundariless, we have that
    $$\partial \Bigl(\Fix_{\Gamma_{0}}(W_{0}) \Bigr)= \emptyset$$
    
    Furthermore, we have that

    $$\Bigl(W \cap S_{\varepsilon}(N) \Bigr) \cap \partial W = \Bigl(\bigl(W_{0} \cap S_{\varepsilon}(N_{0}) \bigr) \times [0,r) \Bigr) \cap \partial W = \Bigl(W_{0} \cap S_{\varepsilon}(N_{0}) \Bigr) \times \{0\}$$

    hence 

    $$\partial \Bigl(W \cap S_{\varepsilon}(N) \Bigr) = \Bigl(W \cap S_{\varepsilon}(N) \Bigr)\cap \partial W$$

    We conclude that $W \cap S_{\varepsilon}(N)$ is neat in $W$ and therefore $S_{\varepsilon}(N)$ is neat in $M$.
\end{proof}

\begin{lema}\label{neat decomposition by minimal strat}
    Let $\OO$ be an orbifold and $S \in Strat(\OO)$ a minimal stratum. There exists a neat decomposition of $\OO$, namely $\{\OO_{1}, \OO_{2}\}$, such that $S \subseteq \OO_{1}$, with $\OO_{1}$ being a very good orbifold, and $S \cap \OO_{2} = \emptyset$. 
\end{lema}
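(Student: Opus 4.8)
The plan is to realise $\OO_{1}$ as a closed tubular neighbourhood of $S$ of small radius and $\OO_{2}$ as the complement of its interior, exactly as in Example~\ref{big teorema para caso n=3}. First I would apply Theorem~\ref{tubular neighborhood theorem for minimal strats} to obtain a neighbourhood $N$ of $S$ that is a smooth very good orbifold, so that $N \cong \widetilde{N}/\Gamma$ for a manifold $\widetilde{N}$ and a finite group $\Gamma$, with quotient projection $\pi \colon \widetilde{N} \longrightarrow N$. Endowing $\widetilde{N}$ with a $\Gamma$-invariant Riemannian metric (which exists by the averaging argument behind Proposition~\ref{smooth orbifold admits Riemmaniann metric}), the induced orbifold distance satisfies $d(\pi(\widetilde{x}), S) = d_{\widetilde{N}}(\widetilde{x}, \widetilde{S})$, since the lift $\widetilde{S}$ is $\Gamma$-invariant; consequently $N_{\varepsilon}(S) = \pi\bigl(N_{\varepsilon}(\widetilde{S})\bigr)$ and $S_{\varepsilon}(S) = \pi\bigl(S_{\varepsilon}(\widetilde{S})\bigr)$ for every $\varepsilon > 0$.

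Next I would control the level set by lifting the neatness statements to $\widetilde{N}$. By Lemma~\ref{Minimal strat is manifold} and the local description $\widetilde{S} = \Fix_{\Gamma}(\widetilde{N})$, Lemma~\ref{Fix is neat submanifold} shows that $\widetilde{S}$ is a neat submanifold of $\widetilde{N}$. Lemma~\ref{S-epsilon is neat submanifold} then produces an $\varepsilon > 0$, small enough that $\overline{N_{\varepsilon}(\widetilde{S})}$ lies inside a tubular neighbourhood of $\widetilde{S}$ and such that $S_{\varepsilon}(\widetilde{S})$ is a neat submanifold of $\widetilde{N}$. Because every object here is $\Gamma$-invariant, passing to the quotient yields that $\overline{N_{\varepsilon}(S)} = \overline{N_{\varepsilon}(\widetilde{S})}/\Gamma$ is a finite quotient of a manifold with boundary, hence a very good orbifold, and that $S_{\varepsilon}(S)$ is a neat suborbifold of $N$, and therefore of $\OO$.

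I would then set $\OO_{1} := \overline{N_{\varepsilon}(S)}$ and $\OO_{2} := \OO - N_{\varepsilon}(S)$. Both are closed, $\OO_{1} \cup \OO_{2} = \OO$, and $S \subseteq N_{\varepsilon}(S) \subseteq \OO_{1}$ while $S \cap \OO_{2} = \emptyset$, so in particular $\OO_{1} \nsubseteq \OO_{2}$; the inclusion $\OO_{2} \nsubseteq \OO_{1}$ holds whenever $\OO \neq \overline{N_{\varepsilon}(S)}$, and in the degenerate case $\OO$ is already very good so the conclusion is immediate. For a regular value $\varepsilon$ of $d(-,S)$ the topological frontier of $N_{\varepsilon}(S)$ is precisely $S_{\varepsilon}(S)$, whence $\OO_{1} \cap \OO_{2} = S_{\varepsilon}(S)$, which is a neat suborbifold of $\OO$ by the previous step. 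Finally $S_{\varepsilon}(S) \subseteq \partial \OO_{1}$ and $S_{\varepsilon}(S) \subseteq \partial \OO_{2}$, since these are exactly the points at distance $\varepsilon$ from $S$ and thus are locally modelled on $\RR^{n-1} \times \{0\}$; combined with neatness and Remark~\ref{neat decomposition iff O1 cap O2 = partial O1 cap partial O2}, this makes $\{\OO_{1}, \OO_{2}\}$ a neat decomposition with the required properties.

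The hard part will be extracting a single $\varepsilon$ that simultaneously keeps $\overline{N_{\varepsilon}(S)}$ inside the very good neighbourhood $N$, makes $S_{\varepsilon}(S)$ a neat suborbifold, and guarantees that $S_{\varepsilon}(S)$ is exactly the frontier $\overline{N_{\varepsilon}(S)} \setminus N_{\varepsilon}(S)$, i.e. that $\varepsilon$ is a regular value so that no critical points of $d(-,S)$ interfere. The compactness of $S$, established in Corollary~\ref{minimal strata are closed sets}, is what permits such a uniform radius to be chosen; the remaining delicate point is the case $S \cap \partial \OO \neq \emptyset$, where one must check that the contributions to $\partial \OO_{1}$ and $\partial \OO_{2}$ coming from $\partial \OO$ itself meet $S_{\varepsilon}(S)$ compatibly and do not spoil the neatness condition along $S_{\varepsilon}(S)$ — this is precisely why the underlying manifold lemmas were proved for manifolds \emph{with boundary}.
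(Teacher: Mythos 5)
Your proposal is correct and follows essentially the same route as the paper's own proof: a very good neighbourhood of $S$ from Theorem~\ref{tubular neighborhood theorem for minimal strats}, neatness of the lifted stratum via Lemma~\ref{Fix is neat submanifold}, neatness of the level set via Lemma~\ref{S-epsilon is neat submanifold}, and then $\OO_{1}$ taken as the closed $\varepsilon$-neighbourhood with $\OO_{2}$ its complement. Your added care about $\Gamma$-invariant metrics, regular values of the distance function, and the degenerate case $\OO = \overline{N_{\varepsilon}(S)}$ only makes explicit some points the paper leaves implicit.
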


\begin{proof}
    
    By Theorem \ref{tubular neighborhood theorem for minimal strats} we have that $S$ has a neighborhood of the form $\quot{M}{\Gamma}$ for $M$ a manifold and $\Gamma$ a finite group. By Lemma \ref{Fix is neat submanifold} we have that $\Fix_{\Gamma}(M)$ is a neat submanifold of $M$ and by Theorem \ref{neat submanifold has tubular neighborhood} $\Fix_{\Gamma}(M)$ has a tubular neighborhood in $M$. Without loss of generality we can assume that this tubular neighborhood is of the form

    $$N_{\varepsilon}(\Fix_{\Gamma}(M)) := \{x \in M \mid d(x,\Fix_{\Gamma}(M)) < \varepsilon\}$$

    and so 

    $$\overline{N_{\varepsilon}(\Fix_{\Gamma}(M))} = \{x \in M \mid d(x,\Fix_{\Gamma}(M)) \leq \varepsilon\}$$

    where $d$ is the distance function in $M$. By Lemma \ref{S-epsilon is neat submanifold} there exists $\varepsilon^{\prime} \geq 0$ such that $S_{\varepsilon}(\Fix_{\Gamma}(M))$ is neat submanifold of $M$, so we fix $\varepsilon = \varepsilon^{\prime}$.

    Now take
    $$\OO_{1} := \quot{\overline{N_{\varepsilon^{\prime}}(\Fix_{\Gamma}(M))}}{\Gamma} \hspace{20 pt} \OO_{2} := \OO - \quot{N_{\varepsilon^{\prime}}(\Fix_{\Gamma}(M))}{\Gamma}$$

    We claim that $\{\OO_{1}, \OO_{2}\}$ is a neat decomposition of $\OO$. It is clear that $\OO_{1} \cup \OO_{2} = \OO$. Moreover, we have that 
    $$\OO_{1} \cap \OO_{2} = \quot{S_{\varepsilon^{\prime}}(\Fix_{\Gamma}(M))}{\Gamma}$$

    By Lemma \ref{S-epsilon is neat submanifold} $S_{\varepsilon^{\prime}}(\Fix_{\Gamma}(M))$ is neat submanifold of $M$, hence $\quot{S_{\varepsilon^{\prime}}(\Fix_{\Gamma}(M))}{\Gamma}$ is neat suborbifold of $\quot{M}{\Gamma}$. Also it is clear that 

    $$\OO_{1} \cap \OO_{2} \subseteq \partial \OO_{1}, \partial \OO_{2}$$
    so we conclude that $\{\OO_{1}, \OO_{2}\}$ is a neat decompositon of $\OO$ and by construction $\OO_{1}$ is a very good orbifold.
\end{proof}

Now we prove Theorem \ref{big theorem}.

\begin{proof}
    Let $\OO$ be a compact $n$-orbifold. We will prove Theorem \ref{big theorem} by induction on $n$.
    
    Note that, by Remark \ref{big theorem holds for dimension 1}, the result is true for $n=1$. Assume that the result holds for $k$ for some odd $k \in \NN$ and take $n=k+2$. Let $S$ be a minimal stratum of $\OO$, by Lemma \ref{neat decomposition by minimal strat} there exists a decomposition of $\OO$, namely $\{\OO_{1}, \OO_{2}\}$, such that $S \subseteq \OO_{1}$, with $\OO_{1}$ being a very good orbifold, and $S \cap \OO_{2} = \emptyset$. In the same way, we can now take a minimal stratum of $\OO_{2}$ and give a neat decomposition of $\OO_{2}$. Indeed, we can repeat this process until all the strata is subtracted in finitely many steps, obtaining a manifold. Therefore by applying Proposition \ref{lema importante} iteratively case $n=k+2$ is proven and hence by induction Theorem \ref{big theorem} follows.
\end{proof}

\begin{exam}[Euler characteristic of $\quot{D^{3}}{T_{24}}$]
    Let $T_{24}$ be the group of symmetries of the tetrahedron. By Theorem \ref{big theorem} we have that 

    $$\chi \left(\quot{D^{3}}{T_{24}} \right) = \frac{1}{2}\chi \left(\partial \left(\quot{D^{3}}{T_{24}} \right)\right )$$

    Note that $\partial \left(\quot{D^{3}}{T_{24}} \right)$ is homeomorphic to an orbifold with underliying space being $D^{2}$, one corner reflector point of order $2$ and $2$ coorner reflector points of order $3$. Hence by Proposition \ref{Euler characteristic for two-orbifolds} we have that 

    $$\chi \left(\quot{D^{3}}{T_{24}} \right) = \frac{1}{2}\left( \chi(D^{2}) - \frac{1}{2} \left(1 - \frac{1}{2} + 1 - \frac{1}{3} + 1 - \frac{1}{3}\right)\right) = \frac{1}{24}$$

\end{exam}

%\printbibliography

\thispagestyle{empty}

\textbf{Acknowledgments.} I would like to thank Professor Joan Porti (Universitat Autònoma de Barcelona) for his extensive knowledge and unwavering support during the realization of this work.

\vspace{30pt}

\textsc{Ramon Gallardo Campos} \\
\textit{Email:} \textmd{ramongallardocampos@gmail.com} or \textmd{rgallaca92@alumnes.ub.edu}

\end{document}